\numberwithin{equation}{section}
\newtheorem{theorem}{Theorem}[section]
\newtheorem{lemma}[theorem]{Lemma}
\newtheorem{definition}[theorem]{Definiton}
\newtheorem{proposition}[theorem]{Proposition}
\newtheorem{corollary}[theorem]{Corollary}
\theoremstyle{definition}
\newenvironment{remark}
  {\pushQED{\qed}\remx}
  {\popQED\endremx}
\begin{document}
\address{Yongming Luo
\newline \indent Faculty of Computational Mathematics and Cybernetics \indent
\newline \indent Shenzhen MSU-BIT University, China\indent
%\newline \indent  1 International University Park Road, Dayun New Town, Longgang District, Shenzhen, Guangdong Province, P.R. China.\indent
}
\email{luo.yongming@smbu.edu.cn}

\newcommand{\diver}{\operatorname{div}}
\newcommand{\lin}{\operatorname{Lin}}
\newcommand{\curl}{\operatorname{curl}}
\newcommand{\ran}{\operatorname{Ran}}
\newcommand{\kernel}{\operatorname{Ker}}
\newcommand{\la}{\langle}
\newcommand{\ra}{\rangle}
\newcommand{\N}{\mathbb{N}}
\newcommand{\R}{\mathbb{R}}
\newcommand{\C}{\mathbb{C}}
\newcommand{\T}{\mathbb{T}}

%%%%%%%%%%%%%%%%%%%%%%%%%%%%%%%%%%%%%%%%
\newcommand{\ld}{\lambda}
\newcommand{\fai}{\varphi}
\newcommand{\0}{0}
\newcommand{\n}{\mathbf{n}}
\newcommand{\uu}{{\boldsymbol{\mathrm{u}}}}
\newcommand{\UU}{{\boldsymbol{\mathrm{U}}}}
\newcommand{\buu}{\bar{{\boldsymbol{\mathrm{u}}}}}
\newcommand{\ten}{\\[4pt]}
\newcommand{\six}{\\[-3pt]}
\newcommand{\nb}{\nonumber}
\newcommand{\hgamma}{H_{\Gamma}^1(\OO)}
\newcommand{\opert}{O_{\varepsilon,h}}
\newcommand{\barx}{\bar{x}}
\newcommand{\barf}{\bar{f}}
\newcommand{\hatf}{\hat{f}}
\newcommand{\xoneeps}{x_1^{\varepsilon}}
\newcommand{\xh}{x_h}
\newcommand{\scaled}{\nabla_{1,h}}
\newcommand{\scaledb}{\widehat{\nabla}_{1,\gamma}}
\newcommand{\vare}{\varepsilon}
\newcommand{\A}{{\bf{A}}}
\newcommand{\RR}{{\bf{R}}}
\newcommand{\B}{{\bf{B}}}
\newcommand{\CC}{{\bf{C}}}
\newcommand{\D}{{\bf{D}}}
\newcommand{\K}{{\bf{K}}}
\newcommand{\oo}{{\bf{o}}}
\newcommand{\id}{{\bf{Id}}}
\newcommand{\E}{\mathcal{E}}
\newcommand{\ii}{\mathcal{I}}
\newcommand{\sym}{\mathrm{sym}}
\newcommand{\lt}{\left}
\newcommand{\rt}{\right}
\newcommand{\ro}{{\bf{r}}}
\newcommand{\so}{{\bf{s}}}
\newcommand{\e}{{\bf{e}}}
\newcommand{\ww}{{\boldsymbol{\mathrm{w}}}}
\newcommand{\zz}{{\boldsymbol{\mathrm{z}}}}
\newcommand{\U}{{\boldsymbol{\mathrm{U}}}}
\newcommand{\G}{{\boldsymbol{\mathrm{G}}}}
\newcommand{\VV}{{\boldsymbol{\mathrm{V}}}}
\newcommand{\II}{{\boldsymbol{\mathrm{I}}}}
\newcommand{\ZZ}{{\boldsymbol{\mathrm{Z}}}}
\newcommand{\hKK}{{{\bf{K}}}}
\newcommand{\f}{{\bf{f}}}
\newcommand{\g}{{\bf{g}}}
\newcommand{\lkk}{{\bf{k}}}
\newcommand{\tkk}{{\tilde{\bf{k}}}}
\newcommand{\W}{{\boldsymbol{\mathrm{W}}}}
\newcommand{\Y}{{\boldsymbol{\mathrm{Y}}}}
\newcommand{\EE}{{\boldsymbol{\mathrm{E}}}}
\newcommand{\F}{{\bf{F}}}
\newcommand{\spacev}{\mathcal{V}}
\newcommand{\spacevg}{\mathcal{V}^{\gamma}(\Omega\times S)}
\newcommand{\spacevb}{\bar{\mathcal{V}}^{\gamma}(\Omega\times S)}
\newcommand{\spaces}{\mathcal{S}}
\newcommand{\spacesg}{\mathcal{S}^{\gamma}(\Omega\times S)}
\newcommand{\spacesb}{\bar{\mathcal{S}}^{\gamma}(\Omega\times S)}
\newcommand{\skews}{H^1_{\barx,\mathrm{skew}}}
\newcommand{\kk}{\mathcal{K}}
\newcommand{\OO}{O}
\newcommand{\bhe}{{\bf{B}}_{\vare,h}}
\newcommand{\pp}{{\mathbb{P}}}
\newcommand{\ff}{{\mathcal{F}}}
\newcommand{\mWk}{{\mathcal{W}}^{k,2}(\Omega)}
\newcommand{\mWa}{{\mathcal{W}}^{1,2}(\Omega)}
\newcommand{\mWb}{{\mathcal{W}}^{2,2}(\Omega)}
\newcommand{\twos}{\xrightharpoonup{2}}
\newcommand{\twoss}{\xrightarrow{2}}
\newcommand{\bw}{\bar{w}}
\newcommand{\bz}{\bar{{\bf{z}}}}
\newcommand{\tw}{{W}}
\newcommand{\tr}{{{\bf{R}}}}
\newcommand{\tz}{{{\bf{Z}}}}
\newcommand{\lo}{{{\bf{o}}}}
\newcommand{\hoo}{H^1_{00}(0,L)}
\newcommand{\ho}{H^1_{0}(0,L)}
\newcommand{\hotwo}{H^1_{0}(0,L;\R^2)}
\newcommand{\hooo}{H^1_{00}(0,L;\R^2)}
\newcommand{\hhooo}{H^1_{00}(0,1;\R^2)}
\newcommand{\dsp}{d_{S}^{\bot}(\barx)}
\newcommand{\LB}{{\bf{\Lambda}}}
\newcommand{\LL}{\mathbb{L}}
\newcommand{\mL}{\mathcal{L}}
\newcommand{\mhL}{\widehat{\mathcal{L}}}
\newcommand{\loc}{\mathrm{loc}}
\newcommand{\tqq}{\mathcal{Q}^{*}}
\newcommand{\tii}{\mathcal{I}^{*}}
\newcommand{\Mts}{\mathbb{M}}
\newcommand{\pot}{\mathrm{pot}}
\newcommand{\tU}{{\widehat{\bf{U}}}}
\newcommand{\tVV}{{\widehat{\bf{V}}}}
\newcommand{\pt}{\partial}
\newcommand{\bg}{\Big}
\newcommand{\hA}{\widehat{{\bf{A}}}}
\newcommand{\hB}{\widehat{{\bf{B}}}}
\newcommand{\hCC}{\widehat{{\bf{C}}}}
\newcommand{\hD}{\widehat{{\bf{D}}}}
\newcommand{\fder}{\partial^{\mathrm{MD}}}
\newcommand{\Var}{\mathrm{Var}}
\newcommand{\pta}{\partial^{0\bot}}
\newcommand{\ptaj}{(\partial^{0\bot})^*}
\newcommand{\ptb}{\partial^{1\bot}}
\newcommand{\ptbj}{(\partial^{1\bot})^*}
\newcommand{\geg}{\Lambda_\vare}
\newcommand{\tpta}{\tilde{\partial}^{0\bot}}
\newcommand{\tptb}{\tilde{\partial}^{1\bot}}
\newcommand{\ua}{u_\alpha}
\newcommand{\pa}{p\alpha}
\newcommand{\qa}{q(1-\alpha)}
\newcommand{\Qa}{Q_\alpha}
\newcommand{\Qb}{Q_\eta}
\newcommand{\ga}{\gamma_\alpha}
\newcommand{\gb}{\gamma_\eta}
\newcommand{\ta}{\theta_\alpha}
\newcommand{\tb}{\theta_\eta}

%%%%%%%%%%%%%%%%%%%%%%%%%%

\newcommand{\mH}{{E}}
\newcommand{\mN}{{N}}
\newcommand{\mD}{{\mathcal{D}}}
\newcommand{\csob}{\mathcal{S}}
\newcommand{\mA}{{A}}
\newcommand{\mK}{{Q}}
\newcommand{\mS}{{S}}
\newcommand{\mI}{{I}}
\newcommand{\tas}{{2_*}}
\newcommand{\tbs}{{2^*}}
\newcommand{\tm}{{\tilde{m}}}
\newcommand{\tdu}{{\phi}}
\newcommand{\tpsi}{{\tilde{\psi}}}
\newcommand{\Z}{{\mathbb{Z}}}
\newcommand{\tsigma}{{\tilde{\sigma}}}
\newcommand{\tg}{{\tilde{g}}}
\newcommand{\tG}{{\tilde{G}}}
\newcommand{\mM}{{M}}
\newcommand{\mC}{\mathcal{C}}
\newcommand{\wlim}{{\text{w-lim}}\,}
\newcommand{\diag}{L_t^\ba L_x^\br}
\newcommand{\vu}{ u}
\newcommand{\vz}{ z}
\newcommand{\vv}{ v}
\newcommand{\ve}{ e}
\newcommand{\vw}{ w}
\newcommand{\vf}{ f}
\newcommand{\vh}{ h}
\newcommand{\vp}{ \vec P}
\newcommand{\ang}{{\not\negmedspace\nabla}}
\newcommand{\dxy}{\Delta_{x,y}}
\newcommand{\lxy}{L_{x,y}}
\newcommand{\gnsand}{\mathrm{C}_{\mathrm{GN},3d}}
\newcommand{\wmM}{\widehat{{M}}}
\newcommand{\wmH}{\widehat{{E}}}
\newcommand{\wmI}{\widehat{{I}}}
\newcommand{\wmK}{\widehat{{Q}}}
\newcommand{\wmN}{\widehat{{N}}}
\newcommand{\wm}{\widehat{m}}
\newcommand{\ba}{\mathbf{a}}
\newcommand{\bb}{\mathbf{b}}
\newcommand{\br}{\mathbf{r}}
\newcommand{\bs}{\mathbf{s}}
\newcommand{\bq}{\mathbf{q}}
\newcommand{\SSS}{\mathcal{S}}
\newcommand{\re}{\mathrm{Re}}
\newcommand{\im}{\mathrm{Im}}
\newcommand{\zt}{{\tilde z}}
%-------------------------------------

%New Commands

%-------------------------------------

%%%%%%%%%%%%%%%%%%%%%%
\title[Almost sure scattering for cubic NLS on $\R^3\times\T$]
{Almost sure scattering for the defocusing cubic nonlinear Schr\"odinger equation on $\R^3\times\T$}
\author{Yongming Luo}

%\date{}
\maketitle

\begin{abstract}
We consider the Cauchy problem for the defocusing cubic nonlinear Schr\"odinger equation (NLS) on the waveguide manifold $\R^3\times\T$ and establish almost sure scattering for random initial data, where no symmetry conditions are imposed and the result is available for arbitrarily rough data $f\in H^s$ with $s\in\R$. The main new ingredient is a layer-by-layer refinement of the newly established randomization introduced by Shen-Soffer-Wu \cite{ShenSofferWu21}, which enables us to also obtain strongly smoothing effect from the randomization for the forcing term along the periodic direction. It is worth noting that such smoothing effect generally can not hold for purely compact manifolds, which is on the contrary available for the present model thanks to the mixed type nature of the underlying domain. As a byproduct, by assuming that the initial data are periodically trivial, we also obtain the almost sure scattering for the defocusing cubic NLS on $\R^3$ which parallels the ones by Camps \cite{Camps} and Shen-Soffer-Wu \cite{Shen2022}. To our knowledge, the paper also gives the first almost sure well-posedness result for NLS on product spaces.
\end{abstract}

\section{Introduction}
In this paper, we study the Cauchy problem for the defocusing cubic nonlinear Schr\"odinger equation (NLS)
\begin{align}\label{nls}
(i\pt_t+\Delta_{x,y})u=|u|^2u
\end{align}
on the semiperiodic space $\R_x^3\times\T_y$. Equation \eqref{nls}, or more generally the NLS-models on $\R^d\times\T^n$, are usually referred to as the NLS on waveguide manifolds which play a fundamental role in the study of nonlinear optics (see for instance \cite{waveguide_ref_1,waveguide_ref_2,waveguide_ref_3}).
From a mathematical point of view, the mixed type nature of $\R^d\times\T^n$ also leads to various new challenges which can not be solved by using the methods developed for the purely Euclidean case $\R^d$. For example, due to the rather weak dispersion along the torus side, deriving the Strichartz estimates on $\T^n$ is indeed a very challenging task where many advanced theories, such as the number theoretical and $\ell^2$-decoupling methods, are needed (see e.g. \cite{Bourgain2,l2decoupling}).

On the other hand, the mixed nature of $\R^d\times \T^n$ also leads to the following interesting question: While the periodic NLS-wave in general does not scatter, its Euclidean counterpart will indeed become asymptotically linear under certain circumstances. It turns out that scattering solutions are still obtainable on waveguide manifolds. There have been nowadays extensive references for the study of the well-posedness and long time behavior problems of the NLS on waveguide manifolds. In this direction, we refer for instance to \cite{TNCommPDE,TzvetkovVisciglia2016,TTVproduct2014,Ionescu2,HaniPausader,CubicR2T1Scattering,R1T1Scattering,Cheng_JMAA,ZhaoZheng2021,RmT1,YuYueZhao2021,Luo_Waveguide_MassCritical,Luo_inter,Luo_energy_crit,luo2022sharp}. We point out that in view of scaling arguments, the model \eqref{nls} is $H^{\frac12}$-critical w.r.t $\R^3$ and energy-critical w.r.t. $\R^3\times\T$, thus it is considered as a model where scattering shall take place (see related discussions in \cite{HaniPausader}). In fact, by appealing to the concentration compactness principle initiated by Kenig and Merle \cite{KenigMerle2006}, it was shown by Zhao \cite{RmT1} that \eqref{nls} always possesses a global scattering solution for arbitrary initial data lying in the energy space. It is also worth mentioning that a corresponding large data scattering result for the focusing analogue of \eqref{nls} was recently established by the author of the present paper by appealing to the so-called semivirial vanishing theory, see \cite{Luo_energy_crit}.

Notice that in the above mentioned references, the well-posedness results are established for initial data lying in the critical or subcritical spaces. On the other hand, since the seminal work of Christ, Colliander and Tao \cite{ill_posed} we know that the NLS-problems are generally ill-posed for supercritical data. Nevertheless, by making use of the probabilistic tools, Bourgain \cite{BourgainProb1,BourgainProb2} was able to prove that those ``badly-behaved'' solutions are only the exceptional ones and in general the well-posedness of the periodic NLS is also expected for low regular initial data coming from a probability set with full measure. On the other hand, the results from \cite{BourgainProb1,BourgainProb2} are closely linked to the invariant Gibbs measure and hence only applicable for spaces with very low regularity (in 2D and higher dimensional spaces, the solutions are even merely distributions). To overcome such difficulties, it was a crucial observation by Burq and Tzvetkov \cite{BurqTzvetkov1,BurqTzvetkov2} that the randomization will indeed provide certain smoothing effect for raising the integrability of the \textit{a priori} estimates. Utilizing this key finding, Burq and Tzvetkov were able to establish local and global well-posedness results for the supercritical wave equation on compact manifolds with random data.

We underline, however, that the results from \cite{BourgainProb1,BourgainProb2,BurqTzvetkov1,BurqTzvetkov2} are closely related to the compact structure of the underlying domain, hence the so far developed methods can not be directly used to deduce similar results for problems posed on domains with infinite size. Such difficulty can be overcome by associating the given supercritical data with another type of randomization, such as the so-called \textit{Wiener randomization} based on a unit-scale decomposition of the frequency space, see e.g. \cite{benyi_oh_pocov_survey} for related discussions.
%See e.g. \cite{benyi_oh_pocov_survey} for a more detailed survey of the related studies in this direction.
It is also worth noting that despite the randomization generally does not improve the differentiability of the initial data, it indeed improves their integrability which is compatible with the Lebesgue-type norms deduced from the Strichartz estimates. This is the main reason why we are able to obtain low regularity well-posedness results by randomizing the data.

In this paper, we aim to establish some first almost sure well-posedness and scattering results for the NLS-models posed on the waveguide manifolds. To our knowledge, such problems have so far not been considered in the existing references. As we shall see, such new results can not be seen as a simple and straightforward extension from the ones for the Euclidean case. To be more precise, the main issue here is that the underlying domain is partially periodic, in which case the scattering (at least along the periodic direction) is not expected to happen. The main novelty of the paper is the construction of a suitable randomization procedure, relying on which we are still able to benefit certain decay for the \textit{a priori} estimates from the strong dispersion of the NLS-wave on $\R^3$, which in turn enables us to ultimately obtain the scattering on the whole space $\R^3\times\T$. Particularly, our result requires no symmetry conditions and is available for any rough data $f\in H^s$ with $s\in\R$.

Before we turn to the precise randomization of the initial data, we shall still review some well-known almost sure well-posedness and scattering results which are mostly related to our study. Among all, we first underline that because of its simple form and wide applicability, the Wiener randomization has become one of the mostly applied randomization method nowadays for studying the almost sure well-posedness and scattering problems of the NLS, see e.g. \cite{Benyi2015,Benyi2019,Brereton} and the references therein.

Nevertheless, by invoking solely the randomization methods one usually ends up with local, small data or conditional global results. To deduce unconditional global results, monotonicity formulas such as the Morawetz inequalities and energy increments will usually come into play. For the energy-critical nonlinear wave equation (NLW), the first almost sure global well-posedness result was established by Pocovnicu \cite{PocovnicuNLW2017} on $\R^4$ and $\R^5$. A similar result was later extended to $\R^3$ by Oh and Pocovnicu \cite{OhPocovnicu2016NLW}. On the other hand, the first almost sure scattering result for the energy-critical NLW on $\R^4$ with radial random data in $H_{\rm rad}^{s}(\R^4)$, $s\in(\frac12,1)$, was given by Dodson, L\"uhrmann and Mendelson \cite{DLM20}.

Unlike the NLW-case, new difficulties arise in the study for the NLS-problems since the energy of the NLS does not control the term $\|\pt_t u(t)\|^2_{L^2(\R^d)}$. By also invoking the additional decay gained from the radial symmetry assumption, Killip, Murphy, and Visan \cite{KMV4dprob} were able to prove the almost sure scattering in $H^s_{\rm rad}(\R^4)$, $s\in(\frac56,1)$, for the cubic NLS on $\R^4$. The result was later improved by Dodson, Lührmann and Mendelson \cite{DLM19} to the range $s\in(\frac12,1)$. By also using certain high-low frequency decomposition technique, Camps \cite{Camps} and Shen-Soffer-Wu \cite{Shen2022} have independently proved the almost sure scattering for the cubic NLS on $\R^3$ with radial random data. It is also worth mentioning the recent almost sure global well-posedness result by Oh, Okamoto, and Pocovnicu \cite{GWP5d6d} for the energy-critical NLS on $\R^5$ and $\R^6$, where no radial assumption was needed.

Lastly, we also review the almost sure well-posedness results which make use of randomization other than the Wiener randomization. The main purpose of using non-standard randomizations lies in the fact that they usually raise much stronger smoothing effect than the one provided by the Wiener randomization, while the construction of such randomizations generally requires a much higher cost. In \cite{Bringmann2021NLW} Bringmann exploited a unit-scale decomposition in both physical and frequency space, in order to establish the almost sure scattering for the non-radial NLW in 4D. By using another randomization based on the annuli decomposition, Bringmann \cite{BringmannRadNLW2020} was able to prove the almost sure scattering for the quintic NLW in $H^s_{\rm rad}(\R^3)$ for any $s>0$. By appealing to a randomization where the angular variable was also taken into account, Spitz \cite{SpitzNLS,SpitzNLW} was able to give the almost sure scattering results for both the non-radial 4D cubic NLS and NLW. Finally, we also mention the very recent work \cite{ShenSofferWu21} by Shen, Soffer and Wu, where the authors proved the almost sure scattering for the 3D and 4D energy-critical NLS. Surprisingly, their result is available in the non-radial case and even for any rough data $f\in H^s(\R^d)$ with $s\in\R$.

In this paper, we follow closely the strategy in \cite{ShenSofferWu21} to prove the almost sure scattering result for \eqref{nls}. We begin with the construction of the randomization, which is detailed in the following section.

\subsection{Randomization}\label{sec random}
Let $K\in\N$ and $N\in 2^{\N_0}$ be a dyadic number. Define
\begin{gather*}
O_N:=\{\xi\in\R^d:|\xi_j|\leq N\quad\forall\,j=1,\cdots d\},\quad
Q_N:=O_{2N}\setminus O_N.
\end{gather*}
Write $Q_1:=O_1$ and for $N\geq 2$ define
\begin{align*}
\mathcal{A}(Q_N):=\{Q : \text{ $Q$ is a dyadic cube with length $N^{-K}$ and $Q\subset Q_N$}\}.
\end{align*}
We first partition $\R^d$ according to $\mathcal{A}(O_N)$:
\begin{align*}
\R^d=Q_1\cup(\cup_{N\in 2^{\N}}\cup_{Q\in\mathcal{A}(Q_N)} Q).
\end{align*}
For educational purpose, if we terminate our decomposition procedure at this point and assign the randomization to the derived decomposition:
\[f^\omega=\sum_j g_j\Box_j f\]
(we will give the precise meaning of the notation right after the educational example), then we arrive at the randomization in \cite{ShenSofferWu21}. A naive idea to extend from the $\R^3$-case studied in \cite{ShenSofferWu21} to $\R^3\times\T$ would be the following: By writing a function $f(x,y)=\sum_{k}f_k(x)e^{iky}$ we define its randomization as
\[f^\omega(x,y):=\sum_k\sum_j g_j\Box_j f_k(x)e^{iky}.\]
Nevertheless, such randomization is rather useless for our purpose. Roughly speaking, by defining the randomization in such a way, the smoothing effect provided by the multiplier $\Box_j$ will be completely ignored by the Fourier modes located in the region $\{|k|\gg|\xi|\}$. To overcome such difficulties, we then make a finer decomposition at each discrete Fourier mode $e^{iky}$ according to its frequency size.

Our construction is as follows: For any $k\in\Z^n\setminus \{0\}$ we can find some $M\in 2^{\N}$ such that $2^{-1}M\leq |k|< M$. For $N\leq M$, we partition $Q_N$ into cubes of length $M^{-K}$:
\begin{align*}
\mathcal{B}_M(Q_N):=\{Q : \text{ $Q$ is a dyadic cube with length $M^{-K}$ and $Q\subset Q_N$}\}.
\end{align*}
Then we partition $\R^d\times\Z^n$ into
\begin{align*}
\R^d\times\Z^n&=(Q_1,0)\cup\bg(\cup_{N\in 2^{\N}}\cup_{Q\in\mathcal{A}(O_N)}( Q,0)\bg)\nonumber\\
&\cup_{M\geq 1}\cup_{k\in\Z^n:M\leq |k|< 2M}\bg[\big(\cup_{N\leq M}\cup_{Q\in\mathcal{B}_M(Q_N)}(Q,k)\big)\nonumber\\
&\cup\big(\cup_{N>M}\cup_{Q\in\mathcal{A}(Q_N)}(Q,k)\big)\bg].
\end{align*}
A schematic description of the dyadic decomposition is found in Fig. \ref{decomp} below.
\begin{figure}[ht!]
  % Requires \usepackage{graphicx}
  \centering
  \includegraphics[width=70mm]{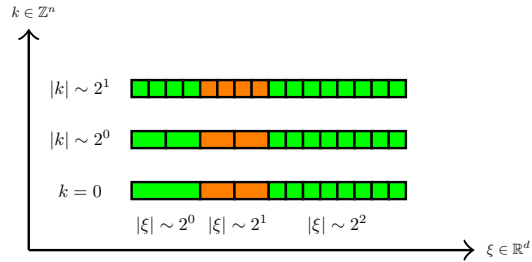}
  \caption{A schematic description of the dyadic decomposition for $K=1$.}\label{decomp}
\end{figure}

By re-enumeration we denote by $\mathcal{Q}$ the collection of the above deduced dyadic cubes:
\begin{align*}
\mathcal{Q}=\{Q_{jk}\subset\R^d,\,j=j_{\R^d}\in\N,\,k=k_{\Z^n}\in\Z^n\}.
\end{align*}
Next, let $\tilde{\psi}_{jk}\in C_0^\infty(\R^d;[0,1])$ be a cut-off function such that
\begin{align*}
\tilde{\psi}_{jk}(\xi)=\left\{
\begin{array}{ll}
1,&\xi\in Q_{jk},\\
0,&\xi\in 2Q_{jk},
\end{array}
\right.
\end{align*}
where 2$Q_{jk}$ is the cube with the same center as $Q_{jk}$ with $\mathrm{diam}\,(2Q_{jk} )=2\mathrm{diam}\,(Q_{jk} )$. Define now $\psi_{jk}$ by
\begin{align*}
\psi_{jk}(\xi)=\frac{\tilde{\psi}_{jk}(\xi)}{\sum_{j'\in\N}\tilde{\psi}_{j'k}(\xi)}.
\end{align*}
For $f:\R^d\to\C$, we define
\begin{align*}
\Box_{jk}f(x):=\mathcal{F}_x^{-1}(\psi_{jk}(\xi)\mathcal{F}_xf(\xi))(x),
\end{align*}
where $\mathcal{F}_x$ denotes the continuous $x$-directional Fourier transform. Let $(g_{jk})_{j\in\N,k\in\Z^n}$ be a sequence of i.i.d complex Gaussians defined on a probability space $(\Omega,\mathcal{F},\mathbb{P})$. By writing $f(x,y)=\sum_{k\in\Z^n}f_k(x)e^{iky}$ we finally define our randomization $f^\omega(x,y)$ as
\begin{align}\label{1.7}
f^{\omega}(x,y):=\sum_{j\in\N,k\in\Z^n}g_{jk}(\omega)\Box_{jk}f_k(x)e^{iky}
\end{align}
for a given sample $\omega\in\Omega$.

\subsection{Main result}
Having defined the randomization we are now able to state our main result.
\begin{theorem}\label{main thm}
Let $s\in\R$ and $f\in H^s(\R^3\times\T)$. Suppose that the number $K\in\N$ defined in Section \ref{sec random} satisfies
\begin{gather}\label{condition on K}
K>\max\left\{-4s+\frac{20}{3},\,-6s+\frac23,\,\frac83\right\}.
%-2s+\frac{14}{3},\,
\end{gather}
For a given sample $\omega\in\Omega$ we define the function $f^\omega$ through \eqref{1.7} associated to $f$. Then for a.e. $\omega\in\Omega$ the NLS \eqref{nls} possesses a global solution $u$ satisfying
\[u-e^{it\Delta}f^\omega\in C(\R;H^1(\R^3\times\T)).\]
Moreover, the solution $u$ scatters in the sense that there exist $\phi^\pm\in H^1(\R^3\times\T)$ such that
\[\lim_{t\to\infty}\|u-e^{it\Delta}f^\omega-e^{it\Delta}\phi^\pm\|_{H^1(\R^3\times\T)}=0.\]
\end{theorem}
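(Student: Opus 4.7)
The plan is to decompose $u = F + w$, where $F(t) := e^{it\Delta_{x,y}} f^\omega$ is the random linear evolution and $w$ is the (hopefully smooth) nonlinear remainder solving the forced NLS
\begin{align*}
(i\pt_t + \Delta_{x,y}) w = |F+w|^2 (F+w), \qquad w(0) = 0.
\end{align*}
Since $u - e^{it\Delta_{x,y}}f^\omega = w$, the theorem reduces to showing that, for a.e.\ $\omega$, $w$ exists globally in $C(\R; H^1(\R^3\times\T))$ with finite scattering norm. Once this is established, Zhao's deterministic large-data scattering result \cite{RmT1} for the energy-critical NLS on $\R^3 \times \T$ supplies the asymptotic states $\phi^\pm \in H^1$ to which $w$ scatters, and the stated convergence follows.

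The first technical step is a package of probabilistic Strichartz-type estimates for $F$, analogous to those in \cite{ShenSofferWu21} but sharpened along the periodic direction. The refined randomization of Section \ref{sec random} is essential: for each Fourier mode $e^{iky}$ with $|k| \sim M$, the cutoffs $\Box_{jk}$ localize at spatial frequency scale $M^{-K}$, so Khintchine-type square-function bounds applied to $f^\omega$ produce an extra $M^{-\alpha K}$ saving which, for $K$ as in \eqref{condition on K}, buys us an effective $y$-derivative of smoothing on top of the standard randomization gain in $x$. I would encode this as mixed-norm bounds of the form
\begin{align*}
\|\langle \partial_y \rangle F\|_{L^q_t L^p_x L^2_y} \lesssim_\omega \|f\|_{H^s(\R^3\times\T)}
\end{align*}
for appropriately chosen $(q,p)$, together with companion estimates on frequency projections. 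As pointed out in the introduction, the naive mode-by-mode randomization $\sum_k \sum_j g_j \Box_j f_k e^{iky}$ would yield no such $y$-smoothing, and it is precisely the layer-by-layer structure that makes this step possible.

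With these estimates in hand, I would first prove almost-sure local well-posedness of $w$ in $H^1$ by a contraction in a suitable Strichartz-type space, bounding the source-type nonlinearities $|F|^2 F$, $|F|^2 w$, $F\bar w w$, $\bar F w^2$, and $|w|^2 w$ by combining the probabilistic estimates on $F$ with the critical Sobolev embeddings on $\R^3 \times \T$; the three inequalities defining \eqref{condition on K} arise from balancing the integrability exponents in these nonlinear source terms. The global $H^1$ control on $w$ would then be obtained by a combined energy plus interaction-Morawetz bootstrap: differentiating
\begin{align*}
E[w](t) := \tfrac12 \|\nabla w(t)\|_{L^2}^2 + \tfrac14 \|w(t)\|_{L^4}^4
\end{align*}
in time and using the equation produces integrals of $\bigl(|F+w|^2(F+w) - |w|^2 w\bigr)$ paired against $\Delta w$, which, via integration by parts (including in $y$, whence the indispensability of the $y$-smoothing on $F$) and the probabilistic bounds, are dominated by $E[w]^\theta \cdot \|F\|_{*}^{1-\theta}$ with $\theta < 1$; Gronwall then closes the estimate, while the interaction Morawetz estimate adapted to $\R^3 \times \T$ (as in \cite{RmT1}) supplies the finite scattering norm needed to apply the deterministic theory.

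The principal obstacle, as the author anticipates, is the $y$-direction: on a purely periodic factor there is no dispersion, and without a mechanism for extracting smoothing in $y$, any $\partial_y$ falling on $F$ during the energy identity represents a lost derivative that cannot be recovered. The refined decomposition $\mathcal{B}_M(Q_N)$ is tailored precisely to address this: by localizing the mode $e^{iky}$ (with $|k|\sim M$) in spatial frequency at the finer scale $M^{-K}$, the Gaussian square-function gain dominates the extra $|k|$-factors produced by $\partial_y$, provided $K$ is large enough. Quantifying this trade-off is what determines the precise lower bound \eqref{condition on K}. If this $y$-smoothing is sufficiently robust to survive the energy bootstrap on $w$, the remainder of the proof parallels the Euclidean analyses of \cite{ShenSofferWu21,Camps,Shen2022}.
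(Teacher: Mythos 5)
Your overall architecture (random linear part plus smoother remainder, probabilistic Strichartz estimates exploiting the layer-by-layer randomization, interaction Morawetz plus energy control, then the deterministic theory) matches the paper in spirit, but there is a genuine gap at the heart of the global argument. You put \emph{all} of the data into the linear evolution and take $w(0)=0$, whereas the paper's proof hinges on a high--low frequency splitting $v_0=P_{>N_0}f^\omega$, $w_0=P_{\leq N_0}f^\omega\in H^1_{x,y}$, with $N_0=N_0(\omega)$ chosen large. The point is that the almost conservation laws (Lemma \ref{almost conv lem}) only close because $\hat v_0$ is supported on $\{|\xi|\geq N_0\}$: the $W$-norm of $v$ carries the supercritical derivative weight $\la\nabla\ra^{(s-\frac13+\frac K2)-}$, so high-frequency support converts it into \emph{negative powers of $N_0$} on $\|v\|_{L_t^2L_{x,y}^\infty}$, $\|\la\nabla\ra v\|_{L_t^2L_{x,y}^\infty}$, $\|\Delta v\|_{L_t^2L_{x,y}^\infty}$, and these beat the growth $M(w(0))\lesssim N_0^{-2s}$, $E(w(0))\lesssim N_0^{2(1-s)}$ of the low-frequency data. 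This $N_0$ is the only small parameter in the problem. With $w(0)=0$ and $F=e^{it\Delta}f^\omega$ containing all frequencies, the corresponding error terms are merely finite constants $C_\omega$ with no smallness, and your proposed ``energy increment plus Gronwall'' cannot close: the increment of $E[w]$ is controlled by global-in-time norms such as $\|w\|_{L_{t,x}^4L_y^2}$, which themselves are only controlled through the interaction Morawetz inequality in terms of $L_t^\infty H^1$-type norms of $w$ — a coupled, superlinear system that requires the double bootstrap with the $N_0$-gain, not a Gronwall argument in $t$.

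Two further points would also need repair. First, Zhao's large-data scattering theorem \cite{RmT1} applies to solutions of the \emph{unforced} cubic NLS, so it cannot be invoked directly for $w$, which solves the perturbed equation \eqref{nls_w}; in the paper it is applied only to the auxiliary solutions $\tilde w^j$ inside the long-time stability/induction argument (Lemmas \ref{short time stab lem}--\ref{long time stab lem}), after partitioning $\R$ into finitely many intervals on which $\|v\|_W$ is small, and the scattering of $u$ is then read off from $\|w\|_{X^1(\R)}+\|v\|_{W(\R)}<\infty$. Second, since \eqref{nls} is energy-critical on $\R^3\times\T$, a contraction ``in a suitable Strichartz-type space'' at $H^1$ regularity does not close; one needs the critical machinery ($U^2/V^2$-based $X^1$ and $Z$-norms) together with the new bilinear and nonlinear estimates of Section \ref{sec 3.2}, which are designed precisely so that the forcing $v$ enters only through $\|v_0\|_{H^s}$ and $W$-type norms rather than through any $H^\alpha$-norm with $\alpha>s$. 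Your heuristic about the $y$-smoothing produced by the refined decomposition $\mathcal B_M(Q_N)$ is correct and is indeed what Lemma \ref{lemma 3.2} and the probabilistic Strichartz estimates encode, but by itself it does not substitute for the missing high--low decomposition and the stability-based globalization.
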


\begin{remark}
By assuming that $f\in H^s(\R^3\times\T)$ is periodically trivial, Theorem \ref{main thm} also gives the almost sure scattering for the cubic NLS on $\R^3$ which parallels the ones established earlier by Camps \cite{Camps} and Shen-Soffer-Wu \cite{Shen2022}. In particular, we assume no symmetry assumptions and the result is available for arbitrarily rough data.
\end{remark}

We basically follow the same line as in \cite{ShenSofferWu21} to prove Theorem \ref{main thm}, which can be briefly summarized as follows:
\begin{itemize}
\item[(i)] We first prove the probabilistic Strichartz estimates adapted to the randomization defined in Section \ref{sec random} (Lemma \ref{prob strichartz lem}). As we shall see, such probabilistic Strichartz estimates will exhibit a supercritical scaling, which turn out to be very useful for H\"older-type estimates.

\item[(ii)] We then prove a local well-posedness result for the solution $w$ of the perturbed NLS \eqref{nls_w} (Lemma \ref{local theo lem}), which serves as the starting point of our bootstrap proof cycle.

\item[(iii)]Afterwards, we prove an interaction Morawetz inequality for the solution $w$ of \eqref{nls_w} (Lemma \ref{inter mora lem}). Followed by a double bootstrap argument we shall finally arrive at the almost conservation laws for $w$ (Lemma \ref{almost conv lem}) which provide some uniform (in time) bounds for the solution $w$.

\item[(iv)]Finally, we partition the time line $\R$ into small pieces where the $\|v\|_W$-norm on each small subinterval is small. By using a suitable stability result (Lemma \ref{long time stab lem}) we are able to infer the $X^1$-finiteness of $w$ on the first small interval. Thanks to the uniform boundedness properties of $w$ deduced from Lemma \ref{almost conv lem}, such perturbation arguments are indeed applicable for all given small subintervals by also combining with a standard inductive argument. The desired claim then follows from summing up the estimates on the small pieces.
\end{itemize}

It is however worth pointing out that albeit the proof routine is very similar to the one from \cite{ShenSofferWu21}, some new technical difficulties do indeed arise in the waveguide setting. As a major technical difficulty, we shall establish our \textit{a priori} estimates based on the framework of atomic spaces. Notice that similar estimates in the deterministic setting were already given in \cite{RmT1}, which unfortunately made use of the $X^1$-norm everywhere in the proofs and thus are not totally compatible with the random setting. For this purpose, we also need to prove several new bilinear and nonlinear estimates for the forcing term $v=e^{it\Delta}v_0$ and the nonlinear term $vw^2$ respectively. On the other hand, we shall also invoke certain Sobolev embedding on $\T$ to modify the interaction Morawetz inequality in the waveguide setting which also leads to some additional lengthy calculations. For more details, we refer to Section \ref{sec 3.2} and Section \ref{sec 4.2} respectively.

The paper is organized as follows: In Section \ref{bigsec2} we collect some preliminary tools which will be used throughout the paper. In Section \ref{bigsec3} we prove some useful estimates. In Section \ref{bigsec4} we give the proof of Theorem \ref{main thm}.

\subsection{Notation and definitions}

Throughout the paper, we ignore in most cases the dependence of the function spaces on their underlying domains and hide this dependence in their indices. For example $L_x^2=L^2(\R^3)$, $H_{x,y}^1= H^1(\R^3\times \T)$
and so on. However, when the space is involved with time, we still display the underlying temporal interval such as $L_t^pL_x^q(I)$, $L_t^\infty L_{x,y}^2(\R)$ etc. For a number $a\in\R$, we denote by $a\pm:=a\pm\vare$ for arbitrary $\vare>0$. We define the $W$-norm by
\begin{align}\label{w norm}
\|v\|_{W}&:=\|\la\nabla\ra^{(s-\frac13+\frac{K}{2})-}v\|_{L_t^2 L_{x,y}^\infty}+\|v\|_{L_{t,x,y}^4}+\|v\|_{L_t^6 L_{x,y}^3}.
\end{align}
We use $\mathcal{F}(f)$ or $\hat{f}$ to denote the $x$-directional Fourier transform, i.e.
\[\mathcal{F}(f)(\xi)=\hat{f}(\xi)=\int_{\R^3}f(x)e^{i\xi\cdot x}\,dx.\]
As usual, we use $P_N$, $P_{\leq N}$, $P_{>N}$ to denote the Littlewood-Paley projectors for a dyadic number $N\geq 1$, where we also make the convention that $P_1=P_{\leq 1}$. Finally, for $s\in[0,\frac{d}{2})$ we say that a pair $(p,q)$ is $H^s$-admissible if $2\leq p,q\leq \infty$, $(p,q,d,s)\neq (2,\infty,2,0)$ and $\frac{2}{p}+\frac{d}{q}=\frac{d}{2}-s$.
\section{Some preliminaries}\label{bigsec2}
In this section we collect some auxiliary tools which will be useful for the upcoming proof. Though we are considering a problem on $\R^3\times\T$, the results in this section will mostly be formulated for the general domain $\R^d\times \T^n$, which might be of independent interest.

\subsection{Isometries and inequalities}
\begin{lemma}[Orthogonality]\label{lemma 3.1}
For $f\in L^2_{x,y}$ with $f(x,y)=\sum_k f_k(x)e^{iky}$ we have
\begin{align*}
\|\Box_{jk}f_k(x)e^{iky}\|_{L_{x,y}^2\ell^2_{jk}}=\|\Box_{jk}f_k\|_{L_{x}^2\ell^2_{jk}}\sim \|f\|_{L_{x,y}^2}.
\end{align*}
\end{lemma}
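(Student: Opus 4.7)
The plan is to reduce the three-norm chain to a single application of Plancherel in $y$ followed by a cube-overlap estimate in $\xi$.

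First, I would treat the equality $\|\Box_{jk}f_k(x)e^{iky}\|_{L^2_{x,y}\ell^2_{jk}}=\|\Box_{jk}f_k\|_{L^2_x\ell^2_{jk}}$. Since for each fixed $k\in\Z^n$ the function $e^{iky}$ has $\|e^{iky}\|_{L^2_y(\T^n)}=(2\pi)^{n/2}$ and the family $\{e^{iky}\}_{k\in\Z^n}$ is orthogonal in $L^2_y(\T^n)$, I would apply Plancherel in $y$ inside the mixed-norm summation. This yields, up to the harmless $(2\pi)^{n/2}$ factor which is absorbed into the $\sim$,
\begin{equation*}
\|\Box_{jk}f_k(x)e^{iky}\|_{L^2_{x,y}\ell^2_{jk}}^2 = \sum_{j,k} \|\Box_{jk}f_k\|_{L^2_x}^2 = \|\Box_{jk}f_k\|_{L^2_x\ell^2_{jk}}^2,
\end{equation*}
using Fubini to swap the $\ell^2_{jk}$ summation with the $L^2_y$ integration (justified once one knows the sum is finite, which is part of the next step).

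For the second relation $\|\Box_{jk}f_k\|_{L^2_x\ell^2_{jk}}\sim\|f\|_{L^2_{x,y}}$, I would fix $k$ and apply Plancherel in $x$ to write $\|\Box_{jk}f_k\|_{L^2_x}^2=\int_{\R^d}|\psi_{jk}(\xi)|^2|\hat f_k(\xi)|^2\,d\xi$ and then sum in $j$. The key algebraic input is that for each fixed $k$ the collection $\{Q_{jk}\}_{j\in\N}$ is, by construction in Section~\ref{sec random}, a (disjoint up to measure zero) tiling of $\R^d$ by dyadic cubes; the functions $\tilde\psi_{jk}$ are supported in $2Q_{jk}$, are identically $1$ on $Q_{jk}$, and the doubled cubes $\{2Q_{jk}\}_{j\in\N}$ have uniformly bounded overlap number $C_d$ independent of $k$. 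After the normalization $\psi_{jk}=\tilde\psi_{jk}/\sum_{j'}\tilde\psi_{j'k}$, one gets $\sum_j\psi_{jk}\equiv 1$ and each $\psi_{jk}$ is again supported in $2Q_{jk}$ and bounded by $1$, so
\begin{equation*}
C_d^{-1}\le \sum_{j}\psi_{jk}(\xi)\le \sum_{j}|\psi_{jk}(\xi)|^2\le \sum_{j}\psi_{jk}(\xi)=1
\end{equation*}
pointwise in $\xi$; the lower bound for $\sum_j|\psi_{jk}|^2$ comes from noting that at most $C_d$ summands are nonzero, so Cauchy--Schwarz gives $1=(\sum_j \psi_{jk})^2\le C_d\sum_j|\psi_{jk}|^2$. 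Combining this with Plancherel produces $\sum_j\|\Box_{jk}f_k\|_{L^2_x}^2\sim \|\hat f_k\|_{L^2_x}^2=\|f_k\|_{L^2_x}^2$ uniformly in $k$.

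Finally, summing this equivalence over $k\in\Z^n$ and invoking Plancherel (or orthogonality) in $y$ one more time yields
\begin{equation*}
\|\Box_{jk}f_k\|_{L^2_x\ell^2_{jk}}^2=\sum_{k}\sum_j\|\Box_{jk}f_k\|_{L^2_x}^2\sim \sum_k\|f_k\|_{L^2_x}^2\sim \|f\|_{L^2_{x,y}}^2,
\end{equation*}
which closes the chain. I expect that the only non-routine step is verifying the uniform overlap bound for the family $\{2Q_{jk}\}_{j}$ at each fixed $k$; this requires one to inspect the three-tier construction in Section~\ref{sec random} (the coarse piece $Q_1$, the annular pieces $\mathcal{A}(Q_N)$, and, when $|k|\sim M$, the refined pieces $\mathcal{B}_M(Q_N)$) and to check that within each tier the dyadic cubes are disjoint and only adjacent doubled cubes overlap, with a bound on the overlap multiplicity depending only on $d$. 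Once that combinatorial fact is in hand, the rest is a straightforward application of Plancherel in both variables.
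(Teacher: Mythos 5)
Your argument is correct in substance and uses the same core ingredients as the paper's proof: Plancherel in both variables together with the uniform bounded-overlap property of the doubled cubes $\{2Q_{jk}\}_j$ at each fixed $k$. One display contains a typographical slip: the chain $C_d^{-1}\le\sum_j\psi_{jk}\le\sum_j|\psi_{jk}|^2\le\sum_j\psi_{jk}=1$ is internally inconsistent (it would force equality throughout, and the middle inequality is in fact reversed, since $0\le\psi_{jk}\le1$ gives $\psi_{jk}^2\le\psi_{jk}$). The intended and correct statement is the two-sided pointwise bound
\[
C_d^{-1}\le \sum_{j}|\psi_{jk}(\xi)|^2\le \sum_{j}\psi_{jk}(\xi)=1,
\]
with the lower bound coming from your Cauchy--Schwarz argument using the bounded overlap, and the upper bound from $\psi_{jk}^2\le\psi_{jk}$. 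With that repaired, your proof closes exactly as you describe. The paper encodes the same overlap bound via the sets $\mathcal{B}_{jk}:=\{j':\mathrm{supp}\,\psi_{j'k}\cap\mathrm{supp}\,\psi_{jk}\neq\varnothing\}$ with $\sup_{jk}|\mathcal{B}_{jk}|<\infty$, expands $1=(\sum_j\psi_{jk})^2$ under the integral, and controls the cross terms by AM--GM; your packaging as a clean pointwise two-sided estimate on $\sum_j|\psi_{jk}|^2$ is a slightly tidier presentation of the identical idea, not a genuinely different route.
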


\begin{proof}
The proof follows the same line of \cite[Lem. 2.6]{ShenSofferWu21}. By our construction of the decomposition, for the set
\begin{align*}
\mathcal{B}_{jk}:=\{j'\in \N:\mathrm{supp}\,\psi_{j'k}\cap \mathrm{supp}\,\psi_{jk}\neq \varnothing\}
\end{align*}
we have $\sup_{jk}|\mathcal{B}_{jk}|<\infty$. For each $k\in\Z^n$ we also have
\begin{align*}
j\in \mathcal{B}_{j'k}\Leftrightarrow j'\in \mathcal{B}_{jk}.
\end{align*}
Using Plancherel and the fact that $\sum_j\psi_{jk}=1$ holds for any $k\in\Z^n$ we obtain
\begin{align*}
\|f\|_{L_{x,y}^2}^2=\sum_k\sum_{j,j':j'\in \mathcal{B}_{j}}\int_{\R^d}\psi_{jk}(\xi)\psi_{j'k}(\xi)|\hat{f}_k(\xi)|^2\,d\xi.
\end{align*}
Hence
\begin{align*}
\|\Box_{jk}f_k(x)e^{iky}\|^2_{L_{x,y}^2\ell^2_{jk}}&=\sum_{jk}\int_{\R^d}|\psi_{jk}\hat{f}_k|^2\,d\xi\nonumber\\
&\leq \sum_k\sum_{j,j':j'\in \mathcal{B}_{j}}\int_{\R^d}\psi_{jk}(\xi)\psi_{j'k}(\xi)|\hat{f}_k(\xi)|^2\,d\xi=\|f\|_{L_{x,y}^2}^2.
\end{align*}
On the other hand, using Cauchy-Schwarz
\begin{align*}
\|f\|_{L_{x,y}^2}^2&=\sum_k\sum_{j,j':j'\in \mathcal{B}_j}\int_{\R^d}\psi_{jk}\psi_{j'k}|\hat{f}_k|^2\,d\xi
\lesssim \sum_k\sum_{j,j':j'\in \mathcal{B}_j}\int_{\R^d}(\psi_{jk}^2+\psi_{j'k}^2)|\hat{f}_k|^2\,d\xi\nonumber\\
&\lesssim \sum_k\sum_{j,j':j'\in \mathcal{B}_j}\int_{\R^d}\psi_{jk}^2|\hat{f}_k|^2\,d\xi\lesssim
\sum_{jk}\int_{\R^d}|\psi_{jk}\hat{f}_k|^2\,d\xi=\|\Box_{jk}f_k(x)e^{iky}\|^2_{L_x^2\ell^2_{jk}},
\end{align*}
as desired.
\end{proof}

\begin{lemma}[$L^q$-$L^p$ estimate]\label{lemma 3.2}
Let $2\leq p\leq q\leq\infty$. Then for any $j\in\N$ and $k\in\Z^n$ we have
\begin{align}
\|\Box_{jk}f(x)e^{iky}\|_{L_{x,y}^q}\lesssim \|\la\nabla\ra^{-K(\frac{d}{p}-\frac{d}{q})}\Box_{jk}f(x)e^{iky}\|_{L_{x,y}^p}.\label{3.7}
\end{align}
\end{lemma}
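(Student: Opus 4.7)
The plan is to reduce the estimate to the classical Bernstein inequality on $\R^d$ by exploiting $|e^{iky}|=1$, and to identify $\la\nabla\ra^{-K(d/p-d/q)}$ with an approximate scalar on the Fourier support of $\Box_{jk}f(x)e^{iky}$. The starting point is the observation that the joint $(x,y)$-Fourier transform of $\Box_{jk}f(x)e^{iky}$ is supported on $2Q_{jk}\times\{k\}\subset \R^d\times\Z^n$. Going through the three regimes of the construction in Section \ref{sec random} (the base cube at $k=0$; cubes from $\mathcal{A}(Q_N)$ associated to $k$ with $N>|k|$; and cubes from $\mathcal{B}_M(Q_N)$ associated to $|k|\sim M\geq N$), one verifies that there is a single scale $\Lambda_{jk}>0$ such that $Q_{jk}$ has sidelength $\sim\Lambda_{jk}^{-K}$ and
\begin{align*}
(1+|\xi|^2+|k|^2)^{1/2}\sim\Lambda_{jk}\qquad\text{on }2Q_{jk},
\end{align*}
with constants uniform in $(j,k)$.

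Because $|e^{iky}|=1$ one has $\|\Box_{jk}f(x)e^{iky}\|_{L^r_{x,y}}=|\T^n|^{1/r}\|\Box_{jk}f\|_{L^r_x}$ for every $r\in[1,\infty]$, so the classical Bernstein inequality applied to the $x$-frequency-localized function $\Box_{jk}f$ (Fourier support inside a cube of sidelength $\sim\Lambda_{jk}^{-K}$) yields
\begin{align*}
\|\Box_{jk}f(x)e^{iky}\|_{L^q_{x,y}}\lesssim \Lambda_{jk}^{-K(d/p-d/q)}\|\Box_{jk}f(x)e^{iky}\|_{L^p_{x,y}}.
\end{align*}
Setting $s:=K(d/p-d/q)\geq 0$, it remains to show the reverse bound
\begin{align*}
\Lambda_{jk}^{-s}\|\Box_{jk}f(x)e^{iky}\|_{L^p_{x,y}}\lesssim \bigl\|\la\nabla\ra^{-s}\Box_{jk}f(x)e^{iky}\bigr\|_{L^p_{x,y}}.
\end{align*}
Writing $\la\nabla_{x,y}\ra^{-s}\bigl(\Box_{jk}f(x)e^{iky}\bigr)=(T_{jk}\Box_{jk}f)(x)e^{iky}$, with $T_{jk}$ the $x$-Fourier multiplier of symbol $(1+|\xi|^2+|k|^2)^{-s/2}$, and picking a smooth bump $\widetilde\psi_{jk}$ equal to $1$ on $\operatorname{supp}\psi_{jk}$ and supported in $3Q_{jk}$, the formal inverse of $T_{jk}$ on functions with Fourier support inside $\operatorname{supp}\psi_{jk}$ is the multiplier with symbol $\Lambda_{jk}^{s}\,\mu_{jk}(\xi)$, where
\begin{align*}
\mu_{jk}(\xi):=\widetilde\psi_{jk}(\xi)\,\bigl(\Lambda_{jk}^{-1}(1+|\xi|^2+|k|^2)^{1/2}\bigr)^{s}.
\end{align*}
Under the affine rescaling sending $Q_{jk}$ onto the unit cube, $\mu_{jk}$ becomes a fixed-scale $C^\infty_c$-function whose derivatives of every order are uniformly bounded in $(j,k)$ (thanks to the comparability in the first paragraph), so its inverse Fourier transform has $L^1_x$-norm uniformly bounded in $(j,k)$. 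By Young's inequality the corresponding multiplier operator is bounded on $L^p_x$ uniformly, which yields the required reverse estimate with constant $\Lambda_{jk}^s$ and completes the proof.

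The main potential obstacle is keeping all constants uniform in $(j,k)$ across the three regimes of the decomposition; once the comparability $(1+|\xi|^2+|k|^2)^{1/2}\sim\Lambda_{jk}$ on $2Q_{jk}$ is established regime by regime, everything else reduces to a routine Bernstein-plus-rescaling calculation.
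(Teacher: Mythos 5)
Your proof is correct and follows essentially the same two-step approach as the paper's: a Bernstein inequality at the scale of the small cube $Q_{jk}$, followed by the pointwise comparability of the multiplier of $\la\nabla\ra^{-K(d/p-d/q)}$ with the constant $\Lambda_{jk}^{-K(d/p-d/q)}$ on that cube. The only cosmetic differences are that you invoke classical Bernstein via modulation and supply the multiplier bound by a rescaling argument, whereas the paper derives the Bernstein-type estimate from Hausdorff--Young, H\"older, and interpolation, states the multiplier bound without proof, and writes out the two cases $N\gtrsim|k|$ and $N\lesssim|k|$ separately rather than packaging them into a single scale $\Lambda_{jk}$.
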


\begin{proof}
The proof is based on a modification of \cite[Lem. 2.7]{ShenSofferWu21} adapted to our randomization defined in Section \ref{sec random}. Using the property of the decomposition $\mathcal{Q}$, we can find some $N\in 2^{\N_0}$ such that $\mathrm{supp}\,\psi_{jk}\subset\{|\xi|\sim N\}$. If $N\gtrsim |k|$, then by our construction of the decomposition we have $|\mathrm{supp}\,\psi_{jk}|\sim N^{-Kd}$ and
\begin{gather*}
\|\Box_{jk}f(x)e^{iky}\|_{L_{x,y}^p}\lesssim N^{K(\frac{d}{p}-\frac{d}{q})}\|\la \nabla \ra^{-K(\frac{d}{p}-\frac{d}{q})}\Box_{jk}f(x)e^{iky}\|_{L_{x,y}^p}.
\end{gather*}
Let now $r\geq 2$ and $\theta\in[0,1]$ such that $(q')^{-1}=r^{-1}+2^{-1}$ and $p^{-1}=\theta 2^{-1}+(1-\theta)q^{-1}$. Using Hausdorff-Young and H\"older we deduce that
\begin{align*}
\|\Box_{jk}f(x)e^{iky}\|_{L_{x,y}^q}&\leq \|\psi_{jk}\hat{f} \delta_k(k')\|_{L_\xi^{q'}\ell_{k'}^{q'}}=
\|\psi_{jk}\hat{f} \|_{L_\xi^{q'}}\nonumber\\
&\leq \|\chi_{\xi \in 2Q_{jk}}\|_{L_\xi^{r}}\|\psi_{jk}\hat{f} \|_{L_\xi^{2}}\lesssim N^{-\frac{Kd}{r}}\|\Box_{jk}f(x)e^{iky}\|_{L_{x,y}^2}.
\end{align*}
Notice also that $\theta r^{-1}=p^{-1}-q^{-1}$. Hence interpolating with the trivial identity $\|\Box_{jk}f(x)e^{iky}\|_{L_{x,y}^q}=\|\Box_{jk}f(x)e^{iky}\|_{L_{x,y}^q}$ yields
\begin{align}
\|\Box_{jk}f(x)e^{iky}\|_{L_{x,y}^q}&\lesssim N^{-\frac{\theta Kd}{r}}\|\Box_{jk}f(x)e^{iky}\|_{L_{x,y}^p}
=N^{-K(\frac{d}{p}-\frac{d}{q})}\|\Box_{jk}f(x)e^{iky}\|_{L_{x,y}^p}\nonumber\\
&\lesssim \|\la \nabla \ra^{-K(\frac{d}{p}-\frac{d}{q})}\Box_{jk}f(x)e^{iky}\|_{L_{x,y}^p}.\label{3.8}
\end{align}
In the case $N\lesssim |k|$ we will instead have $|\mathrm{supp}\,\psi_{jk}|\sim |k|^{-Kd}$ and
\begin{gather*}
\|\Box_{jk}f(x)e^{iky}\|_{L_{x,y}^p}\lesssim |k|^{K(\frac{d}{p}-\frac{d}{q})}\|\la \nabla \ra^{-K(\frac{d}{p}-\frac{d}{q})}\Box_{jk}f(x)e^{iky}\|_{L_{x,y}^p},
\end{gather*}
from which \eqref{3.8} follows in a similar way. This completes the proof.
\end{proof}

\subsection{Probabilistic preliminaries}
In the following we collect some useful tools from the probability theory. Together with the probabilistic Strichartz estimates (Lemma \ref{prob strichartz lem}) we will then establish some uniform bounds for the forcing term $v=e^{it\Delta}f^\omega$ which play a crucial role in the proof of Theorem \ref{main thm}.

\begin{lemma}[Large deviation estimate, \cite{BurqTzvetkov1}]\label{lemma 6.1}
Let $(g_n)_n$ be a sequence of i.i.d. Gaussians. Then there exists some $C>0$ such that for all $(c_n)_n\in \ell^2_n$, $\ld>0$ and $2\leq p<\infty$ we have
\begin{gather*}
\mathbb{P}\bg(\{\omega\in\Omega:|\sum_{n\in\N}c_n g_n(\omega)|>\ld\}\bg)\leq 2\exp\{-C\ld\|c_n\|_{\ell_n^2}^{-2}\},\\
\|\sum_{n\in\N}c_n g_n(\omega)\|_{L_\omega^p}\leq C\sqrt{p}\|c_n\|_{\ell_n^2}.
\end{gather*}
\end{lemma}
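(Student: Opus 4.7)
Both inequalities are classical scalar facts about a single Gaussian random variable once one observes that
\begin{equation*}
X_\omega := \sum_{n\in\N} c_n g_n(\omega), \qquad \sigma := \|c_n\|_{\ell_n^2},
\end{equation*}
is itself a centered Gaussian of variance $\sigma^2$. The first step is to justify this identification. The truncations $X^{(M)}_\omega := \sum_{n\le M} c_n g_n(\omega)$ are each Gaussian with variance $\sum_{n\le M}|c_n|^2$ (by independence of the $g_n$), and they form a Cauchy sequence in $L^2(\Omega)$ with limit $X$. Since Gaussianity is preserved under $L^2$-limits, $X \sim \mathcal{N}(0,\sigma^2)$ in the real case, and an analogous conclusion holds for complex Gaussians after treating real and imaginary parts separately.

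For the $L^p_\omega$-bound I would use the explicit moment of a standard Gaussian $Z\sim\mathcal{N}(0,1)$,
\begin{equation*}
\|Z\|_{L_\omega^p}^p = \frac{1}{\sqrt{2\pi}}\int_\R |z|^p e^{-z^2/2}\,dz = \frac{2^{p/2}}{\sqrt{\pi}}\,\Gamma\!\left(\frac{p+1}{2}\right),
\end{equation*}
combined with Stirling's asymptotic $\Gamma(\frac{p+1}{2})\le C(p/(2e))^{p/2}$, which yields $\|Z\|_{L_\omega^p}\le C\sqrt{p}$ with $C$ independent of $p\ge 2$. By homogeneity $\|X\|_{L_\omega^p} = \sigma\|Z\|_{L_\omega^p}$ (with a harmless factor of $\sqrt{2}$ in the complex case, absorbed into $C$), giving the second inequality.

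For the tail bound I would apply the Chernoff method to $X$. The Gaussian moment generating function $\mathbb{E}[e^{tX}] = e^{t^2\sigma^2/2}$ and Markov's inequality yield, for each $t>0$,
\begin{equation*}
\mathbb{P}(|X|>\lambda) \le 2 e^{-t\lambda}\,\mathbb{E}[e^{tX}] = 2 e^{-t\lambda + t^2\sigma^2/2},
\end{equation*}
and minimizing at $t = \lambda/\sigma^2$ produces $\mathbb{P}(|X|>\lambda)\le 2\exp(-\lambda^2/(2\sigma^2))$, which is in fact sharper than the stated bound (linear in $\lambda$ in the exponent) and implies it. Alternatively one can bypass the mgf entirely and deduce the tail from the $L^p$-bound above: Markov gives $\mathbb{P}(|X|>\lambda)\le (C\sqrt{p}\sigma/\lambda)^p$, and the choice $p \sim \lambda^2/(eC^2\sigma^2)$ recovers the Gaussian tail. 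There is no genuine obstacle here; the only point requiring mild care is arranging a single uniform constant $C$ across both inequalities, which is automatic from the Stirling bound above.
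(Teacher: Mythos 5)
The paper states this lemma by citation to \cite{BurqTzvetkov1} and does not include its own proof, so there is no in-paper argument to match against; but your proof is correct and is in fact a somewhat more elementary route than the one in Burq--Tzvetkov. There the estimate is proved for a broader class of i.i.d. variables assumed only to have a uniformly sub-Gaussian moment generating function, and the argument runs through a Khintchine-type inequality; for genuine Gaussians your observation that the $L^2$-limit of the partial sums is itself $\mathcal{N}(0,\sigma^2)$ collapses everything to one scalar computation, giving the same conclusion with explicit constants and no extra machinery.

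One caution on the tail estimate. Your Chernoff computation correctly yields $\mathbb{P}(|X|>\lambda)\le 2\exp(-\lambda^2/(2\sigma^2))$, but the remark that this ``implies'' the bound $2\exp(-C\lambda\sigma^{-2})$ as printed in the statement is not literally true: for a fixed $C$, the inequality $\exp(-\lambda^2/(2\sigma^2))\le\exp(-C\lambda/\sigma^2)$ fails for $\lambda$ small relative to $\sigma$. The resolution is that the printed exponent is a misprint for $-C\lambda^2\|c_n\|_{\ell_n^2}^{-2}$, which is the form appearing both in \cite{BurqTzvetkov1} and in Lemma \ref{2.4 lem}, where this estimate is actually consumed. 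You have proved precisely the correct quadratic version, so there is no substantive gap in your argument; simply drop the claim of implication to the linear form, as the linear form is not what is needed and is not what the quadratic bound delivers.
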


\begin{lemma}[Almost sure finiteness, \cite{Tzvetkov10,DLM19,DLM20}]\label{2.4 lem}
Let $F$ be a measurable function and suppose that there exist $C_0,K > 0$ and $p_0 \geq 1$ such that for any $p \geq p_0$ we have
\[\|F\|_{L_\omega^p}\leq C_0\sqrt{p}K.\]
Then there exist $c,C_1>0$, depending on $C_0$ and $p_0$ but not on $K$, such that for any $\ld>0$ we have
\[
\mathbb{P}\bg(\{\omega\in\Omega:|F(\omega)|>\ld\}\bg)\leq C_1\exp\{-c\ld^2 K^{-2}\}.
\]
In particular,
\[
\mathbb{P}\bg(\{\omega\in\Omega:|F(\omega)|<\infty\}\bg)=1.
\]
\end{lemma}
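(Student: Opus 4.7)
The strategy is the classical Chebyshev-with-optimized-moment argument. First I would apply Markov's inequality in the form
\[
\pp\bg(\{|F|>\ld\}\bg)\leq \ld^{-p}\|F\|_{L^p_\omega}^p\leq \ld^{-p}(C_0\sqrt{p}K)^p=\exp\Big(p\log\tfrac{C_0\sqrt{p}K}{\ld}\Big),
\]
valid for every $p\geq p_0$. Writing $r:=\ld/K$, the exponent becomes $p\big(\log(C_0/r)+\tfrac12\log p\big)$, which is minimized, ignoring the constraint $p\geq p_0$, at $p=p_*:=\ld^2/(eC_0^2K^2)$. Plugging in gives the clean bound
\[
\pp\bg(\{|F|>\ld\}\bg)\leq e^{-p_*/2}=\exp\Big(-\tfrac{1}{2eC_0^2}\cdot\tfrac{\ld^2}{K^2}\Big).
\]

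The main (and only) subtlety is that this optimization is only admissible when $p_*\geq p_0$, i.e.\ when $\ld\geq \ld_0:=\sqrt{e\,p_0}\,C_0\,K$. For $\ld<\ld_0$, I would simply use the trivial bound $\pp(\{|F|>\ld\})\leq 1$ and absorb it into the claimed estimate by choosing $C_1$ large enough that $C_1 e^{-c\ld_0^2/K^2}\geq 1$; since $\ld_0^2/K^2=ep_0C_0^2$ depends only on $C_0$ and $p_0$, this determines admissible constants $c=(2eC_0^2)^{-1}$ and $C_1=\exp(c e p_0 C_0^2)=e^{p_0/2}$ that indeed do not depend on $K$.

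For the almost sure finiteness conclusion, I would write $\{|F|=\infty\}=\bigcap_{n\in\N}\{|F|>n\}$ and use continuity of measure together with the tail bound just proved:
\[
\pp\bg(\{|F|=\infty\}\bg)=\lim_{n\to\infty}\pp\bg(\{|F|>n\}\bg)\leq \lim_{n\to\infty}C_1\exp\{-cn^2K^{-2}\}=0,
\]
so $\pp(\{|F|<\infty\})=1$. I do not anticipate any real obstacle; the only care needed is to keep the dependencies of $c$ and $C_1$ transparent (on $C_0,p_0$ only, not on $K$), which is important for later applications where $K$ will be taken large.
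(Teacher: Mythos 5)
Your argument is correct and is exactly the standard proof of this lemma: the paper itself offers no proof, citing \cite{Tzvetkov10,DLM19,DLM20}, and the argument in those references is precisely your Chebyshev-with-optimized-moment computation (choose $p\sim \ld^2/K^2$ when this exceeds $p_0$, use the trivial bound otherwise, then conclude almost sure finiteness by continuity of measure). The constants you track, $c=(2eC_0^2)^{-1}$ and $C_1=e^{p_0/2}$, indeed depend only on $C_0,p_0$ and not on $K$, as required.
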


\subsection{Function spaces}
Next, we define the function spaces and collect some of their useful properties which will be used for the Cauchy problem \eqref{nls}, \eqref{nls_w} and \eqref{nls_g1}. We begin with the definitions of $U^p$- and $V^p$-spaces introduced in \cite{HadacHerrKoch2009}.

\begin{definition}[$U^p$-spaces]\label{def up}
Let $1\leq p < \infty$, $\mathcal{H}$ be a complex Hilbert space and $\mathcal{Z}$ be the set of all finite partitions $-\infty<t_0<t_1<...<t_K\leq \infty$ of the real line. A $U^p$-atom is a piecewise constant function $a:\mathbb{R} \rightarrow \mathcal{H}$ defined by
\begin{align*}
a=\sum_{k=1}^{K}\chi_{[t_{k-1},t_k)}\phi_{k-1},
\end{align*}
where $\{t_k\}_{k=0}^{K} \in \mathcal{Z}$ and $\{\phi_k\}_{k=0}^{K-1} \subset \mathcal{H}$ with $\sum_{k=0}^{K}\|\phi_k\|^p_{\mathcal{H}}=1$. The space $U^p(\mathbb{R};\mathcal{H})$ is then defined as the space of all functions $u:\mathbb{R}\rightarrow \mathcal{H}$ such that $u=\sum_{j=1}^{\infty}\lambda_j a_j$ with $U^p$-atoms $a_j$ and $\{\lambda_j\} \in \ell^1$. We also equip the space $U^p(\mathbb{R};\mathcal{H})$ with the norm
\begin{align*}
\|u\|_{U^p}:=\inf\bg\{\sum^{\infty}_{j=1}|\lambda_j|:u=\sum_{j=1}^{\infty}\lambda_j a_j,\,\lambda_j\in \mathbb{C},\, a_j \text{ are } U^p\textmd{-atoms}\bg\}.
\end{align*}
\end{definition}

\begin{definition}[$V^p$-spaces]
We define the space $V^p(\mathbb{R},\mathcal{H})$ as the space of all functions $v:\mathbb{R} \rightarrow \mathcal{H}$ such that
\begin{align*}
\|v\|_{V^p}:=\sup\limits_{\{t_k\}^K_{k=0} \in \mathcal{Z}}\bg(\sum_{k=1}^{K}\|v(t_k)-v(t_{k-1})\|^p_{\mathcal{H}}\bg)^{\frac{1}{p}} < +\infty,
\end{align*}
where we use the convention $v(\infty)=0$. Also, we denote by $V^p_{rc}(\mathbb{R},\mathcal{H})$ the closed subspace of $V^p(\mathbb{R},\mathcal{H})$ containing all right-continuous functions $v$ with $\lim\limits_{t\rightarrow -\infty}v(t)=0$.
\end{definition}
In our context we shall set the Hilbert space $\mathcal{H}$ to be the Sobolev space $H_{x,y}^s$ with $s\in\R$, which will be the case in the remaining parts of the paper.
\begin{definition}[$U_{\Delta}^p$- and $V_{\Delta}^p$-spaces in \cite{HadacHerrKoch2009}]
For $s\in \mathbb{R}$ let $U^p_{\Delta}H_{x,y}^s(\R)$ resp. $V^p_{\Delta}H_{x,y}^s(\R)$ be the spaces of all functions such that $e^{-it\Delta}u(t)$ is in $U^p(\mathbb{R},H_{x,y}^s)$ resp. $V^p_{rc}(\mathbb{R},H_{x,y}^s)$, with norms
\begin{align*}
\|u\|_{U^p_{\Delta}H_{x,y}^s(\R)}=\|e^{-it\Delta}u\|_{U^p(\mathbb{R},H_{x,y}^s)}, \quad \|u\|_{V^p_{\Delta}H_{x,y}^s(\R)}=\|e^{-it\Delta}u\|_{V^p(\mathbb{R},H_{x,y}^s)}.
\end{align*}
\end{definition}

Having defined the $U_\Delta^p$- and $V_\Delta^p$-spaces we are now ready to formulate the function spaces for studying the Cauchy problems in this paper. For $C=[-\frac{1}{2},\frac{1}{2})^{d+n} \in \mathbb{R}^{d+n}$ and $z\in \mathbb{R}^{d+n}$ let $C_z=z+C$ be the translated unit cube centered at $z$ and define the sharp projection operator $P_{C_z}$ by
\begin{align*}
\mathcal{F}_{x,y}(P_{C_z} f)(\xi,k)=\chi_{C_z}(\xi,k) \mathcal{F}_{x,y} (f)  (\xi,k),\quad(\xi,k)\in\R^d\times\Z^n,
\end{align*}
where $\chi_{C_z}$ is the characteristic function restrained on $C_z$. We then define the $X_0^s$- and $Y^s$-spaces as follows:

\begin{definition}[$X_0^s$- and $Y^s$-spaces]
For $s\in \mathbb{R}$ we define the $X_0^s(\R)$- and $Y^s(\R)$-spaces through the norms
\begin{align*}
\|u\|_{X_0^s(\mathbb{R})}^2&:=\sum_{z\in \mathbb{Z}^{d+n}} \langle z \rangle^{2s} \|P_{C_z} u\|_{U_{\Delta}^2L_{x,y}^2(\R)}^2,\\
\|u\|_{Y^s(\mathbb{R})}^2&:=\sum_{z\in \mathbb{Z}^{d+n}} \langle z \rangle^{2s} \|P_{C_z} u\|_{V_{\Delta}^2L_{x,y}^2(\R)}^2 .
\end{align*}
\end{definition}

For an interval $I \subset \mathbb{R}$ we also consider the restriction spaces $X_0^s(I),Y^s(I)$ etc. For these spaces we have the following useful embedding:

\begin{proposition}[Embedding between the function spaces, \cite{HadacHerrKoch2009}]
For $2< p< q<\infty$ we have
\begin{align*}
U^2_{\Delta}H_{x,y}^s \hookrightarrow X_0^s\hookrightarrow  Y^s \hookrightarrow V^2_{\Delta}H_{x,y}^s\hookrightarrow  U^p_{\Delta}H_{x,y}^s\hookrightarrow U^q_{\Delta}H_{x,y}^s \hookrightarrow L^{\infty}H_{x,y}^s.
\end{align*}
\end{proposition}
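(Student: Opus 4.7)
The plan is to break the chain into six elementary inclusions and prove each one; most of them are essentially formal consequences of the definitions, while one embedding is genuinely deep and would be the main obstacle. I will verify the chain working from the outside in.

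First, I would dispose of the purely atomic/orthogonal portions. For $U^2_\Delta H_{x,y}^s\hookrightarrow X_0^s$, the sharp Fourier projections $P_{C_z}$ are orthogonal in $L^2_{x,y}$ and commute with the linear flow $e^{-it\Delta}$. Applying $P_{C_z}$ to any $U^2$-atom $a=\sum \chi_{[t_{k-1},t_k)}\phi_{k-1}$ produces a function whose squared $U^2$-norm is bounded by $\sum \|P_{C_z}\phi_{k-1}\|_{L^2_{x,y}}^2$; summing in $z$ with the weight $\langle z\rangle^{2s}$ yields $\sum_k \|\phi_{k-1}\|_{H^s_{x,y}}^2 = 1$, so a single atom gives a bounded contribution to the $X_0^s$-norm and the atomic decomposition of a general $U^2_\Delta H_{x,y}^s$-function then passes to $X_0^s$. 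The embedding $Y^s\hookrightarrow V^2_\Delta H^s_{x,y}$ is analogous using the definition of the $V^2$-norm and the fact that $\sum_z \|P_{C_z}(v(t_k)-v(t_{k-1}))\|_{L^2_{x,y}}^2 = \|v(t_k)-v(t_{k-1})\|_{L^2_{x,y}}^2$ with an extra weight $\langle z\rangle^{2s}$ for the $H^s_{x,y}$ identification. The intermediate step $X_0^s\hookrightarrow Y^s$ reduces, by the same square-summing in $z$, to the elementary fact $U^2\hookrightarrow V^2$ which follows directly from the atomic definition by evaluating the telescoping sums defining the $V^2$-norm on each atom.

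Next, I would handle the chain on the right. For $U^p\hookrightarrow U^q$ with $p<q$, observe that any $U^p$-atom $a=\sum \chi_{[t_{k-1},t_k)}\phi_{k-1}$ has $\sum_k\|\phi_{k-1}\|_{\mathcal H}^p=1$, which forces $\|\phi_{k-1}\|_{\mathcal H}\leq 1$ and hence $\sum_k\|\phi_{k-1}\|_{\mathcal H}^q\leq \sum_k\|\phi_{k-1}\|_{\mathcal H}^p=1$; thus every $U^p$-atom is (up to an absolute constant) a $U^q$-atom, and the embedding transfers to the full spaces via the atomic decomposition. The very last embedding $U^q_\Delta H_{x,y}^s\hookrightarrow L^\infty_t H^s_{x,y}$ is immediate since the same bound $\|\phi_{k-1}\|_{\mathcal H}\leq 1$ forces any atom $a$ to satisfy $\|a(t)\|_{\mathcal H}\leq 1$ pointwise, and $e^{it\Delta}$ is an isometry on $H^s_{x,y}$.

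The real work is in the middle link $V^2_\Delta H^s_{x,y}\hookrightarrow U^p_\Delta H^s_{x,y}$ for $p>2$, and this is the step I expect to be the main obstacle. The approach (following Hadac–Herr–Koch) is to construct, for a given $v\in V^2_\Delta H^s_{x,y}$, an explicit atomic representation of $e^{-it\Delta}v$: one builds a sequence of refining partitions of $\R$ adapted to the level sets of the oscillation of $e^{-it\Delta}v$, and on each refinement one writes the difference between successive step-function approximations as a $U^p$-atom whose coefficient is controlled by the variation on that scale. Summing these atoms produces the desired atomic decomposition, and Hölder's inequality on the dyadic variations (which is where $p>2$ enters crucially, through $\sum 2^{-k(1-2/p)}<\infty$) converts the $V^2$-norm into an $\ell^1$-control of the coefficients, giving a bound in $U^p$. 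This argument is entirely inherited from the linear $V^2\hookrightarrow U^p$ theory and is applied frequency-cube-by-frequency-cube, so no new waveguide-specific difficulty appears; the proposition is therefore immediate once this single deep embedding is granted.
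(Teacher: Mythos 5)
Your argument is correct and amounts to a faithful reconstruction of the Hadac--Herr--Koch proof that the paper simply cites: the outer inclusions do follow from the elementary atomic and orthogonality computations you give (the $P_{C_z}$ are sharp and commute with $e^{it\Delta}$, and the weighted $\ell^2_z$ square-sum converts $L^2$-atoms into $H^s$-atoms exactly as you say), and the only genuinely non-trivial link is $V^2_\Delta\hookrightarrow U^p_\Delta$, whose HHK construction you sketch accurately, including the role of $p>2$ in summing the dyadic levels. One subtlety you elide and should name: the embedding $V^2\hookrightarrow U^p$ fails for arbitrary $V^2$-functions and requires right-continuity together with the normalization $\lim_{t\to-\infty}v(t)=0$, which is why the paper's definition of $V^p_\Delta H^s_{x,y}$ routes through the subspace $V^p_{rc}$ rather than $V^p$; the greedy level-set refinement you describe only terminates with a countable atomic decomposition under that hypothesis, so it is right-continuity (built into the definitions) together with $p>2$ that makes the step go through.
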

Nevertheless, the space $X_0^s$ does not handle the scattering at $t=-\infty$. Thus as in \cite{HaniPausader}, we define the space $X^s(\R)\subset X_0^s(\R)$ by
\[
X^s(\R):=\{u:\,\text{$\phi_{-\infty}:=\lim_{t\to-\infty}e^{-it\Delta}u(t)$ exists in $H_{x,y}^s$ and $u(t)-e^{-it\Delta}\phi_{-\infty}\in X_0^s(\R)$}\}
\]
as our main underlying function space. The space $X^s(I)$ for $I\subset\R$ is similarly defined by its restriction norm. In order to estimate the Duhamel term, we will also need the so-called $N^s$-norm, which is defined as follows:

\begin{definition}[$N^s$-norm]
On a time slot $I$ with $a\in I$ we define the $N^s(I)$-norm for $s\in\R$ by
\begin{equation*}
\| h\|_{N^s(I)}=\|\int_{a}^{t} e^{i(t-s)\Delta} h(s) \, ds \|_{X^s(I)} .
\end{equation*}
\end{definition}

The following proposition reveals the duality of the spaces $N^s(I)$ and $Y^{-s}(I)$.

\begin{proposition}[Duality of $N^s(I)$ and $Y^{-s}(I)$, \cite{HerrTataruTz1,HerrTataruTz2,HaniPausader}]\label{prop:dual}
The spaces $N^s(I)$ and $Y^{-s}(I)$ satisfy the following duality inequality
\begin{align*}
\|f\|_{N^s(I)} \lesssim \sup_{\|v\|_{Y^{-s}(I)}\leq 1} \bg|\int_{I \times  \mathbb{T}^3} f(t,x){v(t,x)} \, dt dx\bg|.
\end{align*}
\end{proposition}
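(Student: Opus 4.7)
The plan is to reduce the claim to the abstract $U^2$-$V^2$ duality of Hadac-Herr-Koch, localized frequency-by-frequency via the unit-cube projectors $P_{C_z}$, and then reassembled by a weighted Cauchy-Schwarz summation over $z\in\Z^{d+n}$.

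First I would set $u(t):=\int_a^t e^{i(t-\tau)\Delta}f(\tau)\,d\tau$ for $t\in I$, so that $\|f\|_{N^s(I)}=\|u\|_{X^s(I)}$ by definition. Since $u(a)=0$, the corresponding linear profile as $t\to-\infty$ vanishes (after the standard zero-extension of $f$ outside $I$), hence $u\in X_0^s(I)$ and the $X^s$-norm coincides with $\|u\|_{X_0^s(I)}$. It therefore suffices to bound
\[
\|u\|_{X_0^s(I)}\lesssim \sup_{\|v\|_{Y^{-s}(I)}\leq 1}\bg|\int_{I\times\R^3\times\T}f\,\bar v\,dt\,dx\,dy\bg|.
\]

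Next, I would apply the unit-cube projectors and work cube-by-cube. For each $z\in\Z^{d+n}$ the piece $u_z:=P_{C_z}u$ solves $(i\pt_t+\Delta)u_z=iP_{C_z}f$ with $u_z(a)=0$, and specializing the Hadac-Herr-Koch $U^2$-$V^2$ duality to this inhomogeneous Schr\"odinger solution (by passing to the variable $e^{-it\Delta}u_z$, whose atomic pairing against $e^{-it\Delta}w_z$ collapses to a spacetime integral involving $P_{C_z}f$) yields
\[
\|u_z\|_{U_\Delta^2 L_{x,y}^2}\lesssim \sup_{\|w_z\|_{V_\Delta^2 L_{x,y}^2}\leq 1}\bg|\int_I \la P_{C_z}f(t),\,w_z(t)\ra_{L_{x,y}^2}\,dt\bg|.
\]
For each $z$ I pick a near-optimizer $w_z$ and set $v_z:=c_z w_z$ with $c_z:=\la z\ra^{2s}\|u_z\|_{U_\Delta^2 L_{x,y}^2}\|u\|_{X_0^s(I)}^{-1}$. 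Because the sharp projectors $P_{C_z}$ have disjoint Fourier supports, the single test function $v:=\sum_z v_z$ obeys $P_{C_z}v=v_z$ and
\[
\|v\|_{Y^{-s}(I)}^2=\sum_z\la z\ra^{-2s}\|v_z\|_{V_\Delta^2 L_{x,y}^2}^2\leq \|u\|_{X_0^s(I)}^{-2}\sum_z\la z\ra^{2s}\|u_z\|_{U_\Delta^2 L_{x,y}^2}^2=1.
\]

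Finally, expanding the spacetime pairing via Plancherel and invoking the near-optimality of each $w_z$ gives
\[
\bg|\int_{I\times\R^3\times\T}f\,\bar v\,dt\,dx\,dy\bg|\gtrsim \sum_z c_z\|u_z\|_{U_\Delta^2 L_{x,y}^2}=\sum_z\la z\ra^{2s}\|u_z\|_{U_\Delta^2 L_{x,y}^2}^2/\|u\|_{X_0^s(I)}=\|u\|_{X_0^s(I)},
\]
which closes the argument. The main obstacle is the duality step above: one must cleanly identify the abstract $U^2$-$V^2$ bilinear form from Definition \ref{def up} with the spacetime integral $\int\la P_{C_z}f,w_z\ra\,dt$, discarding the boundary contributions at $t=a$ via $u_z(a)=0$ and at $t=\infty$ via the $\lim_{t\to\infty}w_z(t)=0$ convention built into $V_{rc}^2$. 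Once this identification is secured, the remaining steps are a purely algebraic weighted Cauchy-Schwarz.
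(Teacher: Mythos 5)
The paper does not prove Proposition~\ref{prop:dual}; it cites it from Herr--Tataru--Tzvetkov and Hani--Pausader. Your argument is essentially the standard proof from those references: set $u(t)=\int_a^t e^{i(t-\tau)\Delta}f(\tau)\,d\tau$ so that $\|f\|_{N^s(I)}=\|u\|_{X^s(I)}$, note that $u(a)=0$ eliminates the scattering profile at $-\infty$ so the $X^s$- and $X^s_0$-norms coincide, localize with the sharp projectors $P_{C_z}$, apply the Hadac--Herr--Koch duality for the Duhamel integral on each cube, and reassemble with a weighted Cauchy--Schwarz dual-vector construction. The structure and the bookkeeping ($\|v\|_{Y^{-s}}\leq 1$, the pairing collapses to $\|u\|_{X_0^s}$) are all correct.

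Two small points worth cleaning up if you were to write this out in full. First, the ``Hadac--Herr--Koch $U^2$--$V^2$ duality for the inhomogeneous Schr\"odinger solution'' that you invoke is precisely Proposition~2.10/2.11 of their paper, which gives
\[
\bg\|\int_a^t e^{i(t-\tau)\Delta}h(\tau)\,d\tau\bg\|_{U^2_\Delta(I;L^2)}\lesssim \sup_{\|w\|_{V^2_\Delta}\leq 1}\bg|\int_I \la h(t),w(t)\ra\,dt\bg|,
\]
applied with $h=P_{C_z}f$; you should name it explicitly rather than gesture at ``the atomic pairing collapsing.'' Second, when you pick near-optimizers $w_z$ and form $v=\sum_z c_z w_z$, the quantities $\int\la P_{C_z}f,w_z\ra\,dt$ are a priori complex, so to conclude $\bigl|\int f\bar v\bigr|\gtrsim\sum_z c_z\|u_z\|_{U^2_\Delta}$ you need to absorb a phase into each $w_z$ (replace $w_z$ by $e^{i\theta_z}w_z$, which does not change $\|w_z\|_{V^2_\Delta}$) so that every summand is nonnegative real. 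With those two points made precise, your proof matches the cited one.
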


\section{Some useful estimates}\label{bigsec3}
We establish in this section both the deterministic and probabilistic Strichartz estimates and some useful bilinear and nonlinear estimates. For simplicity we also restrict ourselves to the space $\R^3\times\T$.

\subsection{Deterministic and probabilistic Strichartz estimates}

\begin{lemma}[Strichartz estimates on $\R^3\times\T$, \cite{TzvetkovVisciglia2016}]\label{lemma 5.1}
Let $\gamma\in\R$, $\kappa\in[0,3/2)$ and $p,q$ satisfy $p\in[2,\infty]$ and
\begin{align*}
\frac{2}{p}+\frac{3}{q}=\frac{3}{2}-\kappa.
\end{align*}
Then
\begin{align*}
\|e^{it\Delta}f\|_{L_t^p L_x^q H^\gamma_y(\R)}&\lesssim\|f\|_{{H}_x^\kappa H_y^\gamma}.
\end{align*}
\end{lemma}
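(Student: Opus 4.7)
The strategy is to reduce the mixed estimate on $\R^3\times\T$ to the well-known fractional Strichartz estimate on $\R^3$ by Fourier expansion in the periodic variable. Expanding $f(x,y)=\sum_{k\in\Z}f_k(x)e^{iky}$, and using that $\Delta_{x,y}=\Delta_x+\partial_y^2$, one has the explicit formula
\begin{equation*}
e^{it\Delta_{x,y}}f(x,y)=\sum_{k\in\Z}e^{-itk^2}\bigl(e^{it\Delta_x}f_k\bigr)(x)\,e^{iky}.
\end{equation*}
Since the factors $e^{-itk^2}$ are unimodular, Plancherel on $\T$ yields, pointwise in $(t,x)$,
\begin{equation*}
\|e^{it\Delta_{x,y}}f(t,x,\cdot)\|_{H^\gamma_y}^2 \sim \sum_{k\in\Z}\langle k\rangle^{2\gamma}\bigl|e^{it\Delta_x}f_k(x)\bigr|^2.
\end{equation*}

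Next I would apply Minkowski's inequality twice to pull the $\ell^2_k$-summation out of the spatial and temporal Lebesgue norms. From the scaling relation $\tfrac{2}{p}+\tfrac{3}{q}=\tfrac{3}{2}-\kappa$ together with the hypotheses $p\geq 2$ and $\kappa<3/2$, one checks $q\in[2,\infty]$, so Minkowski gives
\begin{equation*}
\bigl\|e^{it\Delta_{x,y}}f\bigr\|_{L^p_tL^q_xH^\gamma_y(\R)} \lesssim \Bigl\|\langle k\rangle^{\gamma}\|e^{it\Delta_x}f_k\|_{L^p_tL^q_x(\R)}\Bigr\|_{\ell^2_k}.
\end{equation*}

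Finally, the exponents $(p,q)$ satisfy the $\dot H^\kappa$-admissibility condition in dimension three, so the classical fractional Strichartz estimate on $\R^3$ (including the Keel--Tao endpoint when needed) yields $\|e^{it\Delta_x}f_k\|_{L^p_tL^q_x(\R)}\lesssim \|f_k\|_{H^\kappa_x}$. Plugging this in and recognising the resulting weighted $\ell^2_k$-square-sum as $\|f\|_{H^\kappa_xH^\gamma_y}^2$ finishes the proof.

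The argument is essentially bookkeeping, so no real obstacle arises; the only mildly delicate point is to verify that the admissibility range for the Euclidean Strichartz estimates fully covers the range $p\in[2,\infty]$, $\kappa\in[0,3/2)$ declared in the statement, in particular at the endpoints where $q=\infty$ or $p=2$, but each such case is handled by the standard fractional/endpoint Strichartz inequality on $\R^3$. Since the result is explicitly attributed to \cite{TzvetkovVisciglia2016}, I would simply refer to that paper and include the above two-line reduction for completeness.
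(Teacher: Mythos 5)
The paper gives no proof of this lemma at all: it is quoted directly from \cite{TzvetkovVisciglia2016}, so there is no internal argument to compare against. Your reduction --- expand $f=\sum_k f_k(x)e^{iky}$, use that $e^{it\Delta_{x,y}}$ acts mode-by-mode as $e^{-itk^2}e^{it\Delta_x}$, apply Plancherel in $y$ to rewrite the $H^\gamma_y$-norm as a weighted $\ell^2_k$-norm, exchange $\ell^2_k$ with $L^q_xL^p_t$ by Minkowski (legitimate since the constraint forces $q\ge 2$ and $p\ge2$), and then invoke the Euclidean Sobolev--Strichartz bound on $\R^3$ for each $f_k$ before resumming --- is precisely the standard argument behind the cited result, and it is correct in all the configurations this paper actually uses (the lemma is only ever invoked with $\kappa=0$, $\gamma=0$ and finite admissible exponents, as in the proof of Lemma \ref{prob strichartz lem}).

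The one point where your write-up is too quick is the closing claim that the endpoint cases are ``handled by the standard fractional/endpoint Strichartz inequality.'' The case $p=2$, $q<\infty$ is indeed fine (Keel--Tao at $(2,6)$ plus subcritical Sobolev embedding), but the statement as written also formally allows $q=\infty$ (any $\kappa\ge\tfrac12$), and there the standard chain breaks down: the admissible companion exponent is $r=3/\kappa$, and the required embedding $W^{\kappa,3/\kappa}(\R^3)\hookrightarrow L^\infty_x$ fails exactly at this critical exponent. Frequency-by-frequency Bernstein recovers the bound only for data in $B^{\kappa}_{2,1}$ (i.e.\ with an $\varepsilon$ of extra regularity), and the Littlewood--Paley square function that would upgrade this to an $\ell^2$-sum, hence to $H^\kappa_x$, is not available in $L^\infty_x$. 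So either restrict the lemma to $q<\infty$ --- harmless for this paper, since those cases are never used --- or supply a genuine argument for the $L^p_tL^\infty_x$ configurations rather than an appeal to the ``standard'' inequality. Apart from this endpoint caveat, the proposal is sound.
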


Following \cite{ShenSofferWu21} we next show the probabilistic Strichartz estimates adapted to the randomization defined in Section \ref{sec random}.

\begin{lemma}[Probabilistic Strichartz estimates]\label{prob strichartz lem}
The following statements hold:
\begin{itemize}
\item[(i)]For any $p\geq 2$ and $s\in\R$ we have
\begin{align}
\|\la\nabla\ra^s e^{it\Delta}f^\omega\|_{L_\omega^p L_t^\infty L_{x,y}^2(\R)}\lesssim\sqrt{p}\|f\|_{H_{x,y}^s}.\label{7.2}
\end{align}

\item[(ii)]
Let $(q,r)$ satisfy $2\leq q,r<\infty$ and $\frac{2}{q}+\frac{3}{r}\leq\frac32$. Let also $(q,r_0)$ be an admissible pair.
%and $K\geq(\frac{1}{2}-\frac{1}{r_0})/ (\frac{3}{r_0}-\frac{3}{r})$.
Then for $s\leq
K(\frac{3}{r_0}-\frac{3}{r})-(\frac{1}{2}-\frac{1}{r_0})$ and $p\geq\max\{q,r\}$ we have
\begin{align}
\|\la\nabla\ra^s e^{it\Delta}f^\omega\|_{L_\omega^p L_t^q L_{x,y}^r(\R)}\lesssim\sqrt{p}\|f\|_{L_{x,y}^2}.\label{7.3}
\end{align}

\item[(iii)]
Let $q$ satisfy $2\leq q< \infty$. Let also $(q,r_0)$ be an admissible pair.
% and $K\geq\frac{r_0}{6}-\frac{1}{3}$.
Then for $s<
\frac{3K}{r_0}-(\frac{1}{2}-\frac{1}{r_0})$ there exists some $p_0\geq 2$ such that for all $p\geq p_0$ we have
\begin{align}
\|\la\nabla\ra^s e^{it\Delta}f^\omega\|_{L_\omega^p L_t^q L_{x,y}^\infty(\R)}\lesssim\sqrt{p}\|f\|_{L_{x,y}^2}.\label{7.3+}
\end{align}

\item[(iv)]
Let $r$ satisfy $2<r\leq \infty$. Then for $s<
K(\frac{3}{2}-\frac{3}{r})$ there exists some $p_0\geq 2$ such that for all $p\geq p_0$ we have
\begin{align}
\|\la\nabla\ra^s e^{it\Delta}f^\omega\|_{L_\omega^p L_t^\infty L_{x,y}^r(\R)}\lesssim\sqrt{p}\|f\|_{L_{x,y}^2}.\label{7.3++}
\end{align}
\end{itemize}
\end{lemma}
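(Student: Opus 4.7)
The plan is to reduce all four estimates to the same sequence of steps: first Minkowski to push $L^p_\omega$ through the spacetime norms, then Lemma \ref{lemma 6.1} to collapse the Gaussian series into an $\ell^2_{jk}$ sum with a $\sqrt{p}$ prefactor, then Minkowski again to pull the $\ell^2_{jk}$ outside of the spacetime norm, and finally a per-cube bound obtained from Lemma \ref{lemma 3.2} (Bernstein on the tiny cube $Q_{jk}$) combined with the deterministic Strichartz inequality (Lemma \ref{lemma 5.1}). Throughout, each cube $Q_{jk}$ carries an effective frequency scale $M := \max(N, |k|)$ (where $\mathrm{supp}\,\psi_{jk} \subset \{|\xi| \sim N\}$), the cube $Q_{jk}$ having diameter $M^{-K}$ and $\la\nabla\ra^s$ acting like $\la M \ra^s$ on its support in both regimes $N \gtrsim |k|$ and $N \ll |k|$. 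Part (i) is then immediate: $L^2_{x,y}$-unitarity of $e^{it\Delta}$ freezes the $L^\infty_t$, and Minkowski ($p\geq 2$) together with Lemma \ref{lemma 6.1} and the orthogonality identity of Lemma \ref{lemma 3.1} close the bound.

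For (ii), the hypothesis $p \geq \max(q, r)$ makes the first Minkowski step legitimate, and $q, r \geq 2$ makes the second one work. The heart of the argument is then the per-cube bound
\begin{equation*}
\|\la\nabla\ra^s e^{it\Delta}\Box_{jk}f_k(x)e^{iky}\|_{L^q_t L^r_{x,y}} \lesssim \|\Box_{jk}f_k\|_{L^2_x},
\end{equation*}
uniformly in $(j,k)$. I would first apply Lemma \ref{lemma 3.2} with exponents $r_0 \leq r$ to trade the target $L^r_{x,y}$ for $L^{r_0}_{x,y}$ at the cost of $M^{-K(3/r_0 - 3/r)}$; then, exploiting that $\Box_{jk}f_k(x)e^{iky}$ carries only a single Fourier mode in $y$, reduce the $L^q_t L^{r_0}_{x,y}$ bound to an $x$-only Strichartz estimate on $\R^3$ for the $\dot H^{1/2 - 1/r_0}$-admissible pair $(q, r_0)$, producing a loss $M^{1/2 - 1/r_0}$ from the $x$-frequency scale $N \leq M$; finally $\la\nabla\ra^s$ contributes $\la M \ra^s$. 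The total exponent of $M$ is $s + (1/2 - 1/r_0) - K(3/r_0 - 3/r)$, which is $\leq 0$ precisely under the stated hypothesis on $s$, and Lemma \ref{lemma 3.1} collects the surviving $\ell^2_{jk}$ sum into $\|f\|_{L^2_{x,y}}$.

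For (iii) and (iv), the Minkowski step fails whenever the outermost spatial or temporal norm is $L^\infty$, and one must substitute a Littlewood-Paley decomposition together with a Sobolev embedding that costs an $\varepsilon$-loss. In (iv), I would decompose $u = e^{it\Delta}f^\omega$ dyadically in $(x,y)$-frequency and use the 1D embedding $L^\infty_t \hookrightarrow H^{1/2+\varepsilon}_t$; since each dyadic piece $P_N u$ satisfies $\|\pt_t P_N u\|_{L^q_t} \lesssim N^2 \|P_N u\|_{L^q_t}$, this costs $N^{1+2\varepsilon}$ per block, which combined with (ii) at the formal endpoint $(q, r_0) = (\infty, 2)$ (the Minkowski failure there being absorbed into the dyadic loss) and an $\ell^2$-summation in $N$ forces the strict inequality $s < K(3/2 - 3/r)$ and the threshold $p \geq p_0$ needed to apply Lemma \ref{2.4 lem}. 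Part (iii) is handled by the analogous dyadic-in-$(x,y)$ scheme, using Bernstein to pass from $L^{r_0}_{x,y}$ to $L^\infty_{x,y}$ with a controlled loss and again using the strict inequality to close the dyadic sum.

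The main obstacle I foresee is the exponent bookkeeping in the per-cube estimate of (ii): the bound must work uniformly in both regimes $N \gtrsim |k|$ and $N \ll |k|$, and one has to verify that the $M$-powers coming from Lemma \ref{lemma 3.2}, Strichartz and $\la\nabla\ra^s$ combine to yield the sharp condition on $s$ in both regimes simultaneously. Secondarily, the dyadic passage from (ii) to (iii) and (iv) demands a careful choice of the auxiliary loss parameter $\varepsilon$ so as not to violate the strict inequality on $s$, and the threshold $p_0$ in the statement is precisely what permits this trade-off to close.
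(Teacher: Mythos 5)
Your overall skeleton for (i) and (ii) --- Minkowski, then Lemma \ref{lemma 6.1}, then Minkowski again, then a per-cube estimate combining Lemma \ref{lemma 3.2} with a Strichartz bound, then Lemma \ref{lemma 3.1} --- is exactly the paper's proof. However, your explanation of where the factor $M^{\frac12-\frac{1}{r_0}}$ comes from is wrong, and the error stems from a misreading of ``admissible pair''. In the paper, $(q,r_0)$ being admissible means $\tfrac{2}{q}+\tfrac{3}{r_0}=\tfrac{3}{2}$, i.e.\ $L^2$-admissible on $\R^3$, so the $x$-directional Strichartz carries \emph{no} derivative loss. The paper's factor $\la\nabla\ra^{\frac12-\frac{1}{r_0}}$ is the cost of the one-dimensional Sobolev embedding $H^{\frac12-\frac{1}{r_0}}(\T)\hookrightarrow L^{r_0}(\T)$, used to pass from $\|\cdot\|_{L_t^q L_{x,y}^{r_0}}$ to $\|\la\nabla\ra^{\frac12-\frac{1}{r_0}}\cdot\|_{L_t^q L_x^{r_0}L_y^2}$ before applying Lemma \ref{lemma 5.1} at $\kappa=\gamma=0$. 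Your proposal instead attributes this loss to an ``$\dot H^{\frac12-\frac{1}{r_0}}$-admissible'' Strichartz inequality on $\R^3$, which is not the admissibility condition in play and, if $(q,r_0)$ is $L^2$-admissible, is simply not where the loss occurs. Your bookkeeping still lands on the stated threshold, but for the wrong reason.

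There is a further wrinkle worth noting: your own observation that $\Box_{jk}f_k(x)e^{iky}$ is a single Fourier mode in $y$ would, if carried through, show that $\|\Box_{jk}f_k(x)e^{iky}\|_{L^{r_0}_y}$ is independent of the exponent, so no $y$-Sobolev cost is incurred at all; one could pass directly to the lossless $\R^3$-Strichartz and obtain the sharper threshold $s\leq K(\tfrac{3}{r_0}-\tfrac{3}{r})$. In other words, the reduction you advertise would have led you to a \emph{stronger} per-cube bound than the lemma claims, so the appearance of $\tfrac12-\tfrac{1}{r_0}$ in your computation should itself have signalled an inconsistency. The paper does not use this shortcut and instead treats $e^{it\Delta}\Box_{jk}f_ke^{iky}$ as a generic function of $(x,y)$ via the $\T$-Sobolev step, which is why the $\tfrac12-\tfrac{1}{r_0}$ penalty is present in the statement.

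For parts (iii) and (iv), the paper's proof is simpler than what you sketch: no Littlewood--Paley decomposition and no $\ell^2$-summation over dyadic blocks, just the Sobolev embeddings $W_{x,y}^{2\vare,4/\vare}\hookrightarrow L_{x,y}^\infty$ and $W_t^{2\vare,1/\vare}\hookrightarrow L_t^\infty$ applied once, followed by trading $\la\pt_t\ra^{2\vare}\lesssim\la\nabla\ra^{4\vare}$ via the equation and then invoking part (ii). The strict inequality on $s$ exists precisely to absorb the arbitrarily small $\vare$-losses from these embeddings, and $p_0$ is chosen to accommodate the resulting finite Lebesgue exponents. Your dyadic approach is not wrong in spirit (the $N^2$ per block is the same $\pt_t\leftrightarrow\Delta$ trade), but you would also have to carry the $\ell^2$-sum in $N$ through Minkowski and Lemma \ref{lemma 6.1}, and it is not clear your sketch closes this loop cleanly given that you invoke part (ii) ``at a formal endpoint'' where its hypotheses fail.
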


\begin{proof}
We begin with the proof of \eqref{7.2}. Using Plancherel, Minkowski and the unitarity of $e^{it\Delta}$ we obtain
\[
\|\la\nabla\ra^se^{it\Delta}f^\omega\|_{L_\omega^p L_t^\infty L_{x,y}^2(\R)}\lesssim \|\la\nabla\ra^sf^\omega\|_{L_\omega^p L_{x,y}^2}
\leq \|\la\nabla\ra^sf^\omega\|_{L_{x,y}^2 L_\omega^p }.
\]

Combining with Lemma \ref{lemma 6.1} and Lemma \ref{lemma 3.1} we infer that
\begin{align*}
\|\la\nabla\ra^sf^\omega\|_{L_{x,y}^2 L_\omega^p }\lesssim \sqrt{p}\|\la\nabla\ra^s(\Box_{jk}f_k e^{iky})\|_{L_{x,y}^2\ell_{jk}^2}
\sim\sqrt p\|f\|_{H_{x,y}^s},
\end{align*}
which gives \eqref{7.2}. Next we prove \eqref{7.3}. Using Minkowski, Lemma \ref{lemma 6.1}, Lemma \ref{lemma 3.2}, the embedding $L_y^{r_0}\hookrightarrow L_y^2$, Lemma \ref{lemma 5.1} and Lemma \ref{lemma 3.1} we obtain that
\begin{align}
\|\la\nabla\ra^se^{it\Delta}f^\omega\|_{L_\omega^p L_t^q L_{x,y}^r(\R)}
&\lesssim
\|\la\nabla\ra^se^{it\Delta}f^\omega\|_{ L_t^q L_{x,y}^r L_\omega^p(\R)}
\lesssim
\sqrt{p}\|\la\nabla\ra^se^{it\Delta}\Box_{jk}f_ke^{iky}\|_{L_t^q L_{x,y}^r \ell_{jk}^2(\R)}\nonumber\\
&\lesssim
\sqrt{p}\|\la\nabla\ra^se^{it\Delta}\Box_{jk}f_ke^{iky}\|_{\ell_{jk}^2 L_t^q L_{x,y}^r (\R)}\nonumber\\
&\lesssim
\sqrt{p}\|\la\nabla\ra^{s-K(\frac{3}{r_0}-\frac{3}{r})}e^{it\Delta}\Box_{jk}f_ke^{iky}\|_{\ell_{jk}^2 L_t^q L_{x,y}^{r_0} (\R)}\nonumber\\
&\lesssim
\sqrt{p}\|\la\nabla\ra^{s-K(\frac{3}{r_0}-\frac{3}{r})+(\frac{1}{2}-\frac{1}{r_0})}e^{it\Delta}\Box_{jk}f_ke^{iky}\|_{\ell_{jk}^2 L_t^q L_{x}^{r_0}L_y^2 (\R)}\nonumber\\
&\lesssim
\sqrt{p}\|\la\nabla\ra^{s-K(\frac{3}{r_0}-\frac{3}{r})+(\frac{1}{2}-\frac{1}{r_0})}\Box_{jk}f_ke^{iky}\|_{\ell_{jk}^2 L_{x,y}^2(\R)}\nonumber\\
&\lesssim\sqrt{p}\|\Box_{jk}f_ke^{iky}\|_{\ell_{jk}^2 L_{x,y}^2(\R)}
\sim\sqrt{p}\|f\|_{L_{x,y}^2}.\label{7.5}
\end{align}
Now we prove \eqref{7.3+}. Let $\vare\in(0,\frac{1}{K+2}(\frac{3K}{r_0}-(\frac12-\frac{1}{r_0})-s)]$ so that
\[s+2\vare \leq K\bg(\frac{3}{r_0}-\frac{3}{4/\vare}\bg)-\bg(\frac{1}{2}-\frac{1}{4/\vare}\bg).\]
Using the Sobolev embedding $W_{x,y}^{2\vare,\frac{4}{\vare}}\hookrightarrow L_{x,y}^\infty$ we infer that
\begin{align*}
\|\la\nabla\ra^se^{it\Delta}f^\omega\|_{L_\omega^p L_t^q L_{x,y}^\infty(\R)}
\lesssim
\|\la\nabla\ra^{s+2\vare}e^{it\Delta}f^\omega\|_{L_\omega^p L_t^q L_{x,y}^{\frac{4}{\vare}}(\R)}.
\end{align*}
By setting $p_0:=\max\{q,\frac{4}{\vare}\}$, the proof of \eqref{7.3+} follows from using the similar arguments as in \eqref{7.5}, we omit the repeating details here. Finally we prove \eqref{7.3++}. It suffices to consider the case $r=\infty$, the general case follows then from interpolating with \eqref{7.2}. Using the Sobolev embedding $W_{t}^{2\vare,\frac{1}{\vare}}\hookrightarrow L_t^\infty$ and $W_{x,y}^{2\vare,\frac{4}{\vare}}\hookrightarrow L_{x,y}^\infty$ for $\vare>0$ we obtain
\begin{align*}
\|\la\nabla\ra^se^{it\Delta}f^\omega\|_{L_\omega^p L_t^\infty L_{x,y}^\infty(\R)}
\lesssim
\|\la\nabla\ra^{s+2\vare}\la\pt_t\ra^{2\vare}e^{it\Delta}f^\omega\|_{L_\omega^p L_t^{\frac{1}{\vare}} L_{x,y}^{\frac{4}{\vare}}(\R)}
\lesssim
\|\la\nabla\ra^{s+6\vare}e^{it\Delta}f^\omega\|_{L_\omega^p L_t^{\frac{1}{\vare}} L_{x,y}^{\frac{4}{\vare}}(\R)}.
\end{align*}
The remaining proof can be deduced similarly as the one for \eqref{7.3+}, we thus omit the details here.
\end{proof}

The following corollary is an immediate consequence of Lemma \ref{2.4 lem} and Lemma \ref{prob strichartz lem}
\begin{lemma}[Almost sure finiteness of crucial norms]\label{finiteness lemma}
Let $s\in\R$, $f\in H_{x,y}^s$ and suppose that $K\in\N$ satisfies $K>-6s+\frac23$. Then for $f^\omega$ defined by \eqref{1.7} associated with $f$ we have
\[\mathbb{P}(\|f^\omega\|_{H_{x,y}^s}+\|f^\omega\|_{L_{x,y}^4}+\|e^{it\Delta}f^\omega\|_{W(\R)}<\infty)=1.\]
\begin{proof}
Notice that $K\in\N$. Thus $K>-6s+\frac23$ actually implies
\[K>\max\{-2s+\frac23,-4s+\frac23,-6s+\frac23,-\frac{4s}{3}\}.\]
Hence Lemma \ref{prob strichartz lem} is applicable and we infer that there exists some $p_0\geq 2$ such that for all $p\geq p_0$
\begin{align*}
&\,\|f^\omega\|_{L_\omega^p H_{x,y}^s}+\|f^\omega\|_{L_\omega^p L_{x,y}^4(\R)}+\|\la\nabla\ra^{(s-\frac13+\frac{K}{2})-}e^{it\Delta}f^\omega\|_{L_\omega^pL_t^2 L_{x,y}^\infty(\R)}\nonumber\\
&\,+\|e^{it\Delta}f^\omega\|_{L_\omega^p L_{t,x,y}^4(\R)}+\|e^{it\Delta}f^\omega\|_{L_\omega^p L_t^6 L_{x,y}^3(\R)}\nonumber\\
\lesssim &\,\|f^\omega\|_{L_\omega^p H_{x,y}^s}+\|\la\nabla\ra^{(s+\frac{3 K}{4})-}e^{it\Delta}f^\omega\|_{L_\omega^p L_t^\infty L_{x,y}^4(\R)}+\|\la\nabla\ra^{(s-\frac13+\frac{K}{2})-}e^{it\Delta}f^\omega\|_{L_\omega^pL_t^2 L_{x,y}^\infty(\R)}\nonumber\\
&\,+\|\la\nabla\ra^{s-\frac16+\frac{K}{4}}e^{it\Delta}f^\omega\|_{L_\omega^pL_{t,x,y}^4(\R)}
+\|\la\nabla\ra^{s-\frac19+\frac{K}{6}}e^{it\Delta}f^\omega\|_{L_\omega^pL_t^6 L_{x,y}^3(\R)}\lesssim\sqrt{p}\|f\|_{H_{x,y}^s}.
\end{align*}
The desired claim then follows from Lemma \ref{2.4 lem}.
\end{proof}

\end{lemma}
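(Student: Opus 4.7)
The plan is to use Lemma \ref{2.4 lem} as the finishing step: if we can produce, for each of the five ingredients in the event, a bound of the form $\|\cdot\|_{L_\omega^p}\lesssim \sqrt p\,\|f\|_{H^s_{x,y}}$ valid for all $p\geq p_0$, then each ingredient is a.s. finite, and by countable intersection the sum is a.s. finite as well. So the substance of the proof is reduced to obtaining $\sqrt p$-type moment estimates for $\|f^\omega\|_{H^s_{x,y}}$, $\|f^\omega\|_{L^4_{x,y}}$, and the three pieces of $\|e^{it\Delta}f^\omega\|_{W(\R)}$.

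The moment estimates for $\|f^\omega\|_{H^s_{x,y}}$ are immediate from Lemma \ref{lemma 6.1} (Gaussian large deviation) combined with the orthogonality of Lemma \ref{lemma 3.1} applied to $\la\nabla\ra^s f^\omega$. For $\|f^\omega\|_{L^4_{x,y}}$ I would use $\|f^\omega\|_{L^4_{x,y}}\leq \|e^{it\Delta}f^\omega\|_{L^\infty_t L^4_{x,y}}$ (a trivial bound by restriction to $t=0$) and then invoke part (iv) of Lemma \ref{prob strichartz lem} at $r=4$ after inserting an artificial $\la\nabla\ra^{(s+3K/4)-}$ (which only increases the $L^4$-norm because $\la\nabla\ra^{-\sigma}$ is bounded on $L^p$ for $\sigma\geq 0$, $1<p<\infty$); this requires $(s+3K/4)-\geq 0$, i.e., $K>-4s/3$. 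For the three parts of the $W$-norm I would, in each case, bound the norm by one with an extra $\la\nabla\ra^\sigma$ factor (valid whenever $\sigma\geq 0$) and then apply the appropriate clause of Lemma \ref{prob strichartz lem}: part (iii) with $(q,r_0)=(2,6)$ for the $L^2_t L^\infty_{x,y}$ piece at level $(s-1/3+K/2)-$, part (ii) with $(q,r,r_0)=(4,4,3)$ for the $L^4_{t,x,y}$ piece (insertion of $\la\nabla\ra^{s-1/6+K/4}$, requiring $K\geq 2/3-4s$), and part (ii) with $(q,r,r_0)=(6,3,3)$ for the $L^6_t L^3_{x,y}$ piece (insertion of $\la\nabla\ra^{s-1/9+K/6}$, requiring $K\geq 2/3-6s$).

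It remains to verify that the single hypothesis $K>-6s+2/3$, together with $K\in\N$, implies all the individual sign and threshold inequalities appearing above, namely $K>-2s+2/3$, $K>-4s+2/3$, $K>-6s+2/3$, and $K>-4s/3$. The first three are obvious (as $-6s+2/3$ is the largest among $-2ks+2/3$ for $k\in\{1,2,3\}$), and the last follows because $K\geq 1$ handles the case $s\geq 0$ while for $s<0$ we have $-4s/3<-6s+2/3$. This is the only mildly finicky bookkeeping step, and it is really the main (and quite minor) obstacle: once the case analysis in $s$ is handled, everything else is a direct application of the probabilistic Strichartz bounds proved in the preceding lemma, so no new estimate is needed.
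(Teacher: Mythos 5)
Your proposal follows essentially the same route as the paper: reduce to $\sqrt{p}$-moment bounds and conclude via Lemma \ref{2.4 lem}, insert exactly the derivative weights $(s+\tfrac{3K}{4})-$, $(s-\tfrac13+\tfrac{K}{2})-$, $s-\tfrac16+\tfrac{K}{4}$, $s-\tfrac19+\tfrac{K}{6}$, apply the corresponding clauses of Lemma \ref{prob strichartz lem} (with part (i)'s content, i.e.\ Lemma \ref{lemma 6.1} plus Lemma \ref{lemma 3.1}, for $\|f^\omega\|_{H^s_{x,y}}$), and verify the same bookkeeping $K>\max\{-2s+\tfrac23,-4s+\tfrac23,-6s+\tfrac23,-\tfrac{4s}{3}\}$ using $K\in\N$. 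Two harmless slips: for the $L_t^6L_{x,y}^3$ piece the admissible companion of $q=6$ is $r_0=\tfrac{18}{7}$, not $3$ (the exponent $s-\tfrac19+\tfrac{K}{6}$ you wrote is precisely the one corresponding to $r_0=\tfrac{18}{7}$, so nothing changes), and for $s>0$ the threshold $-6s+\tfrac23$ is not the largest of the three, but there all thresholds are below $\tfrac23<1\le K$, the same $K\ge 1$ argument you already used for $-\tfrac{4s}{3}$.
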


\subsection{Bilinear and nonlinear estimates}\label{sec 3.2}
We define the $Z$-norm by
\[
\|u\|_{Z(I)}:=\bg(\sum_{N\geq 1}N^{6-p_0}\|\chi_{t\in I}P_N u\|^{p_0}_{\ell_k^{\frac{2p_0}{p_0-3}}L_{t,x,y}^{p_0}(k,k+1)}\bg)^{\frac{1}{p_0}}
\]
with $p_0=\frac{21}{4}$. The $Z$-norm is referred to as the scattering norm for the cubic NLS on $\R^3\times\T$. Moreover, it is weaker than the $X^1$-norm, i.e. $\|u\|_Z\lesssim \|u\|_{X^1}$. For more details of the properties of the $Z$-norm, we refer to \cite{HaniPausader,RmT1}. We shall also use the $Z$-norm to build up our local theory. To that end we need the following useful lemma concerning the estimate of the Duhamel integral given in term of the $Z$-norm.

\begin{lemma}[Nonlinear estimate I, \cite{RmT1}]\label{9.1 lem}
Define the $Z'$-norm by
\[\|u\|_{Z'}:=\|u\|_{Z}^{\frac34}\|u\|_{X^1}^{\frac14}.\]
Then
\begin{align*}
\|w_1 w_2 w_3\|_{N^1}\lesssim \sum_{\{i,j,k\}=\{1,2,3\}}\|w_i\|_{X^1}\|w_j\|_{Z'}\|w_k\|_{Z'}.
\end{align*}
\end{lemma}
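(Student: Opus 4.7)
The plan is to dualize and reduce to a frequency-by-frequency Cauchy--Schwarz analysis, where the required decay comes from the bilinear Strichartz inequalities on $\R^3\times\T$ developed in \cite{HaniPausader,RmT1}, and the precise exponent combination $\|w_j\|_{Z'}=\|w_j\|_Z^{3/4}\|w_j\|_{X^1}^{1/4}$ is fixed by interpolation at the critical scaling of the $Z$-norm. By Proposition \ref{prop:dual}, it suffices to bound
\begin{align*}
\bigl|\textstyle\int_{I\times\R^3\times\T} w_1 w_2 w_3 \overline{v}\,dt\,dx\,dy\bigr|
\end{align*}
uniformly for $v$ in the unit ball of $Y^{-1}(I)$. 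I would then Littlewood--Paley decompose $w_i=\sum_{N_i} P_{N_i}w_i$ and $v=\sum_{N_4} P_{N_4}v$, exploit the symmetry of the trilinear product to reduce to the case $N_1\geq N_2\geq N_3$, and absorb the single derivative from $N^1$ onto the highest-frequency factor $P_{N_1}w_1$; frequency-support considerations then force $N_4\lesssim N_1$.

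The workhorse is the $\R^3\times\T$ bilinear Strichartz-type estimate, which for dyadic $M\leq N$ yields a power gain $(M/N)^{\delta}$ in the $L^2_{t,x,y}$-norm of $P_N u\cdot P_M w$ controlled by $Y^0$-norms. I would pair $(P_{N_1}w_1, P_{N_3}w_3)$ against $(P_{N_2}w_2, P_{N_4}v)$ via Cauchy--Schwarz in space-time and apply the bilinear estimate to each pair; the two off-diagonal factors generate enough decay to sum freely over $N_3$ and over the non-dominant of $\{N_2,N_4\}$. The dominant pairing between $N_1$ and $N_4$ is closed by matching the $\la N_1\ra$-weight from $N^1$ against the $\la N_4\ra^{-1}$-weight of $Y^{-1}$ and assigning the leftover $\la N_1\ra$-factor to the $X^1$-norm on the leading factor.

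To produce precisely the combination $\|\cdot\|_{Z'}$ on the two non-leading factors, I would interpolate the bilinear estimate between its critical $Z$-version --- based on the $\ell_k^{2p_0/(p_0-3)}L_{t,x,y}^{p_0}$ structure with $p_0=21/4$ built into the $Z$-norm, which is exactly the scattering scale for cubic NLS on $\R^3\times\T$ --- and a subcritical $X^1$-version obtained from the embedding $X^1\hookrightarrow Y^0$ together with a Sobolev trace on the $\T$-side. The exponents $3/4$ and $1/4$ are dictated by dimensional matching with the cubic nonlinearity at the energy-critical scaling of $\R^3\times\T$, and after this interpolation each non-leading factor enters with exactly the $Z'$-combination announced in the statement.

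The main obstacle will be closing the dyadic sum at this critical scaling: because the problem is energy-critical on $\R^3\times\T$, no purely $X^1$-based bound is summable in the Littlewood--Paley indices, and the $Z'$-norm is engineered so that the $3/4$-power of the critical $Z$-norm delivers just enough high-frequency decay while the $1/4$-power of $X^1$ supplies the small slack needed to overcome the borderline divergence. Verifying this balance in each of the standard frequency regimes (High-High-Low, High-Low-Low, and comparable) is the delicate bookkeeping step, but it parallels the corresponding $X^1$-only analysis carried out in \cite{RmT1}.
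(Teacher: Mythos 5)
You should note first that the paper does not actually prove this lemma: it is quoted from \cite{RmT1}, and the closest in-paper template is the proof of Lemma \ref{5.5 lem}, which implements exactly the scheme you outline (dualize via Proposition \ref{prop:dual}, Littlewood--Paley decompose, apply the bilinear estimates of Lemmas \ref{bil lem 1}--\ref{bil lem 2}, then sum by Cauchy--Schwarz/Schur over comparable frequencies). Your treatment of the regime where the dual function carries the top frequency, $N_4\sim N_1$, is correct and is the standard argument: there the $\la N_4\ra^{-1}$ weight of $Y^{-1}$ is traded against the derivative, both top blocks are square-summable, and the two low factors enter in $Z'$ with the $\delta$-gains providing summability.

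The genuine gap is the regime $N_1\sim N_2\gg N_4$, in particular $N_1\sim N_2\geq N_3\geq N_4$. With your fixed pairing $\|P_{N_1}w_1P_{N_3}w_3\|_{L^2}\,\|P_{N_2}w_2P_{N_4}v\|_{L^2}$, Lemma \ref{bil lem 2} places $Z'$ only on the lower-frequency member of each pair, so $P_{N_2}w_2$ (sitting at frequency $\sim N_1$) can only be estimated through its $Y^0$-norm via Lemma \ref{bil lem 1}; after the weight bookkeeping and Cauchy--Schwarz over the diagonal $N_1\sim N_2$ this yields a bound of the form $\|w_1\|_{X^1}\|w_2\|_{X^1}\|w_3\|_{Z'}$, which is \emph{not} dominated by the claimed right-hand side (it is strictly weaker, since $\|\cdot\|_{Z'}\lesssim\|\cdot\|_{X^1}$ but not conversely). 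No re-pairing repairs this within your toolkit: to put $Z'$ on both $w_2$ and $w_3$ using Lemma \ref{bil lem 2}, each must be paired with a factor of comparable or larger frequency, and since $N_4\leq N_3$ the only candidate is $P_{N_1}w_1$, which can serve at most one of them. The missing ingredient, which is how \cite{RmT1,HerrTataruTz1,HaniPausader} actually close this case, is an estimate controlling a Littlewood--Paley block at (near-)top frequency directly by its $Z'$-norm, coming from the $\ell_k^{2p_0/(p_0-3)}L^{p_0}_{t,x,y}$ building blocks of $Z$ interpolated against Strichartz/$Y$-bounds; this lets the second-highest factor enter in $Z'$ rather than in $Y^0$. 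Your appeal to ``interpolation dictated by dimensional matching'' cannot substitute for this: $Z$, $Z'$ and $X^1$ all scale like the energy-critical norm, so no scaling count can fix the exponents $3/4$ and $1/4$; they are determined by the specific Lebesgue exponents in those block estimates, and in any case the $Z'$-bilinear estimate is already available to quote (Lemma \ref{bil lem 2}) rather than to re-derive.
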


We will also need a suitable estimate for the term $\|v \tilde w^2 \|_{N^1}$, where $v$ and $\tilde w$ are the functions given in \eqref{4.49} below. In particular, the upper bound of this term should not contain any term of the form $\|v_0\|_{H_{x,y}^\alpha}$ with $\alpha>s$. To derive such an upper bound, we firstly state some auxiliary bilinear estimates.

\begin{lemma}[Bilinear estimate I, \cite{HerrTataruTz2}]\label{bil lem 1}
There exists some $\delta>0$ such that for all $N_1\geq N_2$ we have
\begin{align*}
\|P_{N_1}u_1 P_{N_2}u_2\|_{L_{t,x,y}^2}\lesssim N_2\bg(\frac{N_2}{N_1}+\frac{1}{N_2}\bg)^\delta
\|P_{N_1}u_1 \|_{Y^0}\|P_{N_2}u_2\|_{Y^0}.
\end{align*}
\end{lemma}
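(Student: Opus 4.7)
The plan is to follow the classical two-step strategy for bilinear Strichartz-type estimates on waveguide manifolds: first reduce from the adapted function space $Y^0$ to free Schr\"odinger solutions via the atomic transfer principle, and then prove the bilinear estimate for free solutions by Fourier-decomposing along the compact $\T$-direction and appealing to the classical $\R^3$ bilinear Strichartz estimate.

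For the reduction, the embedding chain $Y^0\hookrightarrow V^2_\Delta L^2 \hookrightarrow U^p_\Delta L^2$ for some $p>2$ together with the atomic structure of $U^p$ reduces the claim (modulo arbitrarily small $N_j^\vare$ losses that can be absorbed into $\delta$) to the model bilinear estimate
\begin{equation*}
\|e^{it\Delta}P_{N_1}\phi_1\cdot e^{it\Delta}P_{N_2}\phi_2\|_{L^2_{t,x,y}}\lesssim N_2\Bigl(\frac{N_2}{N_1}+\frac{1}{N_2}\Bigr)^{\delta}\|\phi_1\|_{L^2_{x,y}}\|\phi_2\|_{L^2_{x,y}}
\end{equation*}
for $\phi_j\in L^2_{x,y}$ frequency-localized to the dyadic annulus $\{|(\xi,k)|\sim N_j\}$.

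For the second step, I would expand each free solution along the $\T$-Fourier modes as $e^{it\Delta}\phi_j=\sum_{k_j\in\Z}e^{-itk_j^2}(e^{it\Delta_x}\phi_{j,k_j})(x)e^{ik_jy}$ and apply Plancherel in $y$, converting the square of the left-hand side into
\begin{equation*}
\sum_{k\in\Z}\Bigl\|\sum_{k_1+k_2=k}e^{-it(k_1^2+k_2^2)}(e^{it\Delta_x}\phi_{1,k_1})(e^{it\Delta_x}\phi_{2,k_2})\Bigr\|_{L^2_{t,x}}^2.
\end{equation*}
For each pair $(k_1,k_2)$ the classical $\R^3$ bilinear Strichartz estimate $\|e^{it\Delta_x}f_1\cdot e^{it\Delta_x}f_2\|_{L^2_{t,x}}\lesssim N_2 N_1^{-1/2}\|f_1\|_{L^2}\|f_2\|_{L^2}$ applies; combining this with Minkowski and Cauchy--Schwarz over the at most $\sim N_2$ lattice pairs with $k_1+k_2=k$, and then summing in $k$, gives a total bound of order $N_2^3/N_1$ squared, i.e.\ the contribution $N_2(N_2/N_1)^{1/2}$ to the bracket.

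The additional term $1/N_2^{\delta}$ accounts for the two degenerate regimes where the Euclidean bilinear gain breaks down: the low-frequency case $N_2\lesssim 1$ (the unit torus scale), and the case where $|k_j|$ is comparable to $N_j$, forcing the effective $x$-frequency of $\phi_{j,k_j}$ to be much smaller than $N_j$ so that the sharp Euclidean bilinear estimate cannot be applied and one must retreat to Bernstein. The main technical obstacle lies precisely in this case analysis between the ``bulk'' pairs $|k_j|\ll N_j$ and the ``degenerate'' pairs $|k_j|\sim N_j$, and in coordinating it with the sharp tracking of losses in the transfer principle from $Y^0$ to free solutions. Handling both carefully is what produces the mixed bracket $(N_2/N_1+1/N_2)^\delta$, making the estimate uniform across all dyadic scales $N_1\geq N_2\geq 1$.
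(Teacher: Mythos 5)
First, a point of comparison: the paper does not prove this lemma at all --- it is quoted from Herr--Tataru--Tzvetkov \cite{HerrTataruTz2} (see also Zhao \cite{RmT1} for the $\R^3\times\T$ adaptation) --- so your proposal has to stand on its own, and as written it has two genuine gaps, even though the overall plan (reduce to free solutions, expand in periodic Fourier modes, invoke the Euclidean bilinear Strichartz estimate) is a sensible starting point and your computation in the nondegenerate regime correctly yields $N_2(N_2/N_1)^{1/2}$. The first gap is the reduction from $Y^0$ to free solutions. An $L^2_{t,x,y}$-valued bilinear estimate transfers through the atomic structure only for $U^2_\Delta$: a $U^p_\Delta$ atom with $p>2$ is still piecewise a free evolution, but its pieces are only $\ell^p$-normalized in $L^2_{x,y}$, while the $L^2_t$ norm of a product of atoms sums as $\ell^2$ over the time subintervals, so the atomic sums cannot be controlled; and $Y^0\hookrightarrow V^2_\Delta$ does \emph{not} embed into $U^2_\Delta$. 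This is not an ``$N^\vare$ loss'' that can be absorbed into $\delta$. The standard repair, which your sketch never sets up, is to prove \emph{two} estimates --- one with the $(N_2/N_1)$-gain at the $U^2$ level, and a second one without gain but with a factor measured in $U^q_\Delta$ for some $q>2$ (coming from Strichartz estimates with a small loss, which is the true source of the $1/N_2$ term) --- and then pass to $V^2$, hence $Y^0$, via the $U^2$--$V^2$ interpolation argument of \cite{HadacHerrKoch2009,HerrTataruTz2}, at the price of shrinking $\delta$.

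The second gap is the degenerate regime, which Bernstein does not handle. When the high frequency of $P_{N_1}u_1$ sits mostly in the periodic direction, i.e. $|k_1|\sim N_1$ while the $x$-frequency is only $O(N_2)$, the per-pair Euclidean bilinear constant is merely $O(N_2^{1/2})$, and your Minkowski plus Cauchy--Schwarz over the $\sim N_2$ pairs $(k_1,k_2)$ with $k_1+k_2=k$ then gives only $\lesssim N_2\|\phi_1\|_{L^2}\|\phi_2\|_{L^2}$, with no gain at all --- this is strictly weaker than $N_2(N_2/N_1+1/N_2)^{\delta}$ when $1\ll N_2\ll N_1$. The saving in this regime does not come from integrability considerations but from orthogonality in the periodic/temporal variables: for fixed output mode $k$ the temporal frequencies $k_1^2+k_2^2$ of the different pairs are separated at scale $\sim N_1$, which (at least when $N_2^2\lesssim N_1$, exactly where the gain is most needed) dwarfs the $O(N_2^2)$ temporal spread of the Euclidean factors, so the $k_1$-sum obeys an $\ell^2$ rather than $\ell^1$ bound; equivalently one runs the lattice-point counting argument of \cite{HerrTataruTz2} after decomposing $P_{N_1}$ into frequency cubes of side $N_2$ and using Galilean invariance. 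Taking absolute values via Minkowski destroys precisely this structure, so the case analysis you flag as ``the main technical obstacle'' is in fact left unresolved by the tools you name.
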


\begin{lemma}[Bilinear estimate II, \cite{RmT1}]\label{bil lem 2}
There exists some $\delta>0$ such that for all $N_1\geq N_2$ we have
\begin{align*}
\|P_{N_1}u_1 P_{N_2}u_2\|_{L_{t,x,y}^2}\lesssim \bg(\frac{N_2}{N_1}+\frac{1}{N_2}\bg)^\delta
\|P_{N_1}u_1 \|_{Y^0}\|P_{N_2}u_2\|_{Z'}.
\end{align*}
\end{lemma}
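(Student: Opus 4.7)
The plan is to obtain the estimate by interpolating Lemma \ref{bil lem 1} against a complementary H\"older--Strichartz-type bilinear bound tailored to the $Z$-norm, and then to repackage the result via the definition $\|u\|_{Z'}=\|u\|_Z^{3/4}\|u\|_{X^1}^{1/4}$.

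First, since $X^0\hookrightarrow Y^0$ and $\|P_{N_2}u_2\|_{X^1}\sim N_2\|P_{N_2}u_2\|_{X^0}$, one has, for a single Littlewood--Paley piece, the embedding $\|P_{N_2}u_2\|_{Y^0}\lesssim N_2^{-1}\|P_{N_2}u_2\|_{X^1}$. Plugging this into Lemma \ref{bil lem 1} cancels the prefactor $N_2$ and gives
\[
\|P_{N_1}u_1 P_{N_2}u_2\|_{L^2_{t,x,y}}\lesssim \Big(\tfrac{N_2}{N_1}+\tfrac{1}{N_2}\Big)^{\delta}\|P_{N_1}u_1\|_{Y^0}\|P_{N_2}u_2\|_{X^1}.
\]

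Second, I would establish a complementary $Z$-based bound. The idea is to split $L^2_{t,x,y}$ on each unit time slab $(k,k+1)$ as $L^{42/13}_{t,x,y}\cdot L^{21/4}_{t,x,y}$, so that the second factor aligns with the $Z$-norm structure, and then to sum over $k$ by a further H\"older in $\ell^{7/2}_k\cdot \ell^{14/3}_k$ (whose reciprocals $\tfrac{2}{7}+\tfrac{3}{14}=\tfrac{1}{2}$ match). The $\ell^{14/3}_k L^{21/4}_{t,x,y}(k,k+1)$-factor carrying $P_{N_2}u_2$ is then controlled by $N_2^{-1/7}\|P_{N_2}u_2\|_{Z}$ directly from the definition of $Z$, while the $\ell^{7/2}_k L^{42/13}_{t,x,y}(k,k+1)$-factor carrying $P_{N_1}u_1$ is handled via Strichartz on $\R^3\times\T$ (Lemma \ref{lemma 5.1} at the $\R^3$-admissible pair $(7/2,42/13)$), the Sobolev embedding $H^{4/21}(\T)\hookrightarrow L^{42/13}(\T)$ along the periodic direction, and the atomic transference principle from $U^{7/2}_\Delta L^2$ into $Y^0$. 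Interpolating the two estimates with weights $\tfrac{1}{4}$ and $\tfrac{3}{4}$, one finds
\[
\|P_{N_1}u_1 P_{N_2}u_2\|_{L^2_{t,x,y}}\lesssim \Big(\tfrac{N_2}{N_1}+\tfrac{1}{N_2}\Big)^{\delta/4} \|P_{N_1}u_1\|_{Y^0} \|P_{N_2}u_2\|_{X^1}^{1/4}\|P_{N_2}u_2\|_{Z}^{3/4},
\]
and the last product is precisely $\|P_{N_2}u_2\|_{Z'}$, yielding the claim with new exponent $\delta':=\delta/4>0$.

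The main technical subtlety is balancing the small Sobolev loss $N_1^{4/21}$ (intrinsic to converting between $L^2_y$ and $L^{42/13}_y$ on the compact periodic direction) against the dyadic gain $(N_2/N_1+1/N_2)^{\delta}$ from Lemma \ref{bil lem 1}. One either absorbs this polynomial factor by slightly shifting the interpolation weights away from $(\tfrac{1}{4},\tfrac{3}{4})$ and shrinking the output $\delta'$, or one rearranges the H\"older split so that the $L^\infty_y$ loss falls on the low-frequency factor (where it is controlled by $N_2\le N_1$). Either way the argument goes through because the Bourgain-type exponent $\delta$ of Lemma \ref{bil lem 1} is robust: it may be replaced by any smaller positive exponent, so all the intermediate losses can be absorbed, and the bookkeeping is precisely what is carried out in \cite{RmT1}.
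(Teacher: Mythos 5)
The paper does not prove this lemma; it is cited from \cite{RmT1} (where it follows the template of Hani--Pausader \cite{HaniPausader}). So there is no in-paper proof to compare against, and your proposal has to stand on its own — and unfortunately it does not close.

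Your estimate 1 is fine: plugging $\|P_{N_2}u_2\|_{Y^0}\lesssim N_2^{-1}\|P_{N_2}u_2\|_{X^1}$ into Lemma \ref{bil lem 1} indeed gives a bound with $\|P_{N_2}u_2\|_{X^1}$ and the same dyadic prefactor. The problem is estimate 2. Your bound for the high-frequency factor $\|P_{N_1}u_1\|_{\ell^{7/2}_k L^{42/13}_{t,x,y}(k,k+1)}$ via Lemma \ref{lemma 5.1} at the $\R^3$-admissible pair $(7/2,42/13)$ plus Sobolev in $y$ necessarily costs a Bernstein factor $N_1^{4/21}$. The $Z$-norm prefactor on the low-frequency factor is $N_2^{-(6-p_0)/p_0}=N_2^{-1/7}$. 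So estimate 2 carries the factor $N_1^{4/21}N_2^{-1/7}$, which does not decay; already on the diagonal $N_1=N_2=N$ it is $N^{4/21-3/21}=N^{1/21}$. After interpolating with weights $(\tfrac14,\tfrac34)$ the residual factor is
\[
\bg(\frac{N_2}{N_1}+\frac{1}{N_2}\bg)^{\delta/4}\,N_1^{1/7}N_2^{-3/28},
\]
and on the diagonal $N_1=N_2=N$ the dyadic bracket is $\approx 1$ while $N^{1/7-3/28}=N^{1/28}\to\infty$. So the interpolated bound is not of the form $(\tfrac{N_2}{N_1}+\tfrac{1}{N_2})^{\delta'}$ for any $\delta'>0$; it is unbounded.

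Your two suggested repairs do not rescue this. Shifting the interpolation weights to $(1-\theta,\theta)$ with $\theta\neq\tfrac34$ produces the product $\|P_{N_2}u_2\|_{X^1}^{1-\theta}\|P_{N_2}u_2\|_Z^{\theta}$, which is $\lesssim \|P_{N_2}u_2\|_{Z'}$ only when $\theta\geq \tfrac34$ (since $\|\cdot\|_Z\lesssim\|\cdot\|_{X^1}$, smaller $\theta$ makes the bound \emph{weaker} than needed); but increasing $\theta$ only amplifies the $N^{1/21}$-type loss. Rearranging the H\"older split to place the Bernstein cost on the low frequency merely replaces $N_1^{4/21}$ by $N_2^{4/21}$, and the resulting net factor $N_2^{4/21-1/7}=N_2^{1/21}$ is still unbounded. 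The assertion that ``the Bourgain-type exponent $\delta$ is robust'' is not the issue: the loss you must absorb is a fixed positive power of $N$ on the diagonal, where the bracket from Lemma \ref{bil lem 1} gives no decay at all, so no shrinking of $\delta$ helps. The actual proof in \cite{RmT1} (cf. \cite[Prop.~2.8]{HaniPausader}) does not proceed by Strichartz plus Sobolev in $y$. It relies on a frequency-localized Strichartz estimate on $\R^3\times\T$ in the sharp form
\[
\bg(\sum_{\gamma\in\Z}\|e^{it\Delta_{x,y}}P_{\leq N}u_0\|^q_{L_{t,x,y}^r(\gamma,\gamma+1)}\bg)^{1/q}\lesssim N^{2-\frac{6}{r}}\|u_0\|_{L^2_{x,y}}
\]
for admissible $(q,r)$ with $q\in(2,4)$, in which the $\ell^q_\gamma$-summability over unit time slabs and the optimal exponent $N^{2-6/r}$ are built into a single estimate rather than split into $\R^3$-Strichartz times $y$-Bernstein; together with a frequency-dependent interpolation between this and Lemma \ref{bil lem 1} (one does not use the fixed $(\tfrac14,\tfrac34)$ split applied uniformly over all $(N_1,N_2)$). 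Your proposal is missing this ingredient, and the flat H\"older--Sobolev route cannot supply it.
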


\begin{lemma}[Bilinear estimate III]\label{bil lem 3}
Let $s\in\R$, $\kappa>-2s+1$ and $v_0\in H^s_{x,y}$. Set also $\beta:=2s+\kappa-1>0$ and $v=e^{it\Delta}v_0$. Then there exists some $\delta>0$ such that
\begin{itemize}
\item[(i)]For $N_1\geq N_2\geq 1$ we have
\begin{align*}
\|P_{N_1} v P_{N_2}u\|_{L_{t,x,y}^2}&\lesssim \bg(\frac{N_2}{N_1^{2s+\kappa}}\bg)^{\frac12}
\bg(\frac{N_2}{N_1}+\frac{1}{N_2}\bg)^{\frac{\delta}{2}}
\|P_{N_1}v_0\|^{\frac12}_{H_{x,y}^s}\|\la\nabla\ra^{s+\kappa}P_{N_1}v\|^{\frac12}_{L_t^2 L_{x,y}^\infty}\|P_{N_2}u\|_{Y^0}\nonumber\\
&\lesssim N_1^{-\frac{\beta}{2}}
\bg(\frac{N_2}{N_1}+\frac{1}{N_2}\bg)^{\frac{\delta}{2}}
\|P_{N_1}v_0\|^{\frac12}_{H_{x,y}^s}\|\la\nabla\ra^{s+\kappa}P_{N_1}v\|^{\frac12}_{L_t^2 L_{x,y}^\infty}\|P_{N_2}u\|_{Y^0}.
\end{align*}

\item[(ii)]For $1\leq N_1\leq N_2$ we have
\begin{align*}
\|P_{N_1} v P_{N_2}u\|_{L_{t,x,y}^2}&\lesssim \bg(\frac{1}{N_1^{2s+\kappa-1}}\bg)^{\frac12}
\bg(\frac{N_1}{N_2}+\frac{1}{N_1}\bg)^{\frac{\delta}{2}}
\|P_{N_1}v_0\|^{\frac12}_{H_{x,y}^s}\|\la\nabla\ra^{s+\kappa}P_{N_1}v\|^{\frac12}_{L_t^2 L_{x,y}^\infty}\|P_{N_2}u\|_{Y^0}\nonumber\\
&\lesssim
\bg(\frac{N_1}{N_2}+\frac{1}{N_1}\bg)^{\frac{\delta}{2}}
\|P_{N_1}v_0\|^{\frac12}_{H_{x,y}^s}\|\la\nabla\ra^{s+\kappa}P_{N_1}v\|^{\frac12}_{L_t^2 L_{x,y}^\infty}\|P_{N_2}u\|_{Y^0}.
\end{align*}
\end{itemize}
\end{lemma}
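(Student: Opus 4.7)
The strategy is to interpolate between two complementary estimates. The first comes from the deterministic bilinear bound Lemma \ref{bil lem 1}, applied with $u_1=v$ and $u_2=u$, together with the energy estimate $\|P_{N_1}v\|_{Y^0}\lesssim \|P_{N_1}v_0\|_{L^2_{x,y}}\lesssim N_1^{-s}\|P_{N_1}v_0\|_{H^s_{x,y}}$ (since $e^{it\Delta}v_0$ sits in $U^2_\Delta L^2$ at the initial data level). The second is the trivial H\"older estimate
\begin{align*}
\|P_{N_1}v\,P_{N_2}u\|_{L^2_{t,x,y}}\lesssim \|P_{N_1}v\|_{L^2_t L^\infty_{x,y}}\|P_{N_2}u\|_{L^\infty_t L^2_{x,y}}\lesssim N_1^{-(s+\kappa)}\|\la\nabla\ra^{s+\kappa}P_{N_1}v\|_{L^2_t L^\infty_{x,y}}\|P_{N_2}u\|_{Y^0},
\end{align*}
which exploits the extra smoothing that is available because $v$ is a free Schr\"odinger wave and the $L^2_t L^\infty_{x,y}$-norm of its dyadic piece is controllable.

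For case (i), plugging the $Y^0$-to-$H^s$ bound into Lemma \ref{bil lem 1} yields the estimate $N_2 N_1^{-s}(N_2/N_1+1/N_2)^\delta\|P_{N_1}v_0\|_{H^s}\|P_{N_2}u\|_{Y^0}$, and taking the geometric mean with the H\"older estimate above produces exactly the prefactor $(N_2/N_1^{2s+\kappa})^{1/2}(N_2/N_1+1/N_2)^{\delta/2}$, since $-s/2-(s+\kappa)/2=-(2s+\kappa)/2$. The second inequality in (i) follows immediately from $N_2\leq N_1$, which gives $(N_2/N_1^{2s+\kappa})^{1/2}=(N_2/N_1)^{1/2}N_1^{-\beta/2}\leq N_1^{-\beta/2}$ where $\beta=2s+\kappa-1>0$ by hypothesis.

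For case (ii), the same two ingredients apply but Lemma \ref{bil lem 1} now produces the factor $N_1$ (since $N_1\leq N_2$) and the gap factor $(N_1/N_2+1/N_1)^\delta$. Combining the resulting bound $N_1^{1-s}(N_1/N_2+1/N_1)^\delta\|P_{N_1}v_0\|_{H^s}\|P_{N_2}u\|_{Y^0}$ with the same H\"older estimate via the geometric mean gives the prefactor $N_1^{(1-2s-\kappa)/2}=N_1^{-\beta/2}$, which matches $(N_1^{-(2s+\kappa-1)})^{1/2}$. The coarser second inequality then follows simply from $N_1\geq 1$ and $\beta>0$.

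The proof is essentially a bookkeeping exercise once the right pair of estimates is identified; the only place that requires any care is ensuring that the $Y^0$-norm of the free wave piece $P_{N_1}v$ can indeed be traded for $N_1^{-s}\|P_{N_1}v_0\|_{H^s}$, which is where the assumption $v=e^{it\Delta}v_0$ (as opposed to a general element of $Y^0$) is crucial. No additional bilinear machinery beyond Lemma \ref{bil lem 1} is needed, which explains why no extra transversality or Galilean-invariance argument appears.
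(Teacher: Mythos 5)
Your proposal reproduces the paper's argument: the paper likewise combines Lemma~\ref{bil lem 1} (via the bound $\|P_{N_1}v\|_{Y^0}\lesssim N_1^{-s}\|P_{N_1}v_0\|_{H^s_{x,y}}$) with the H\"older estimate $\|P_{N_1}vP_{N_2}u\|_{L^2_{t,x,y}}\lesssim N_1^{-(s+\kappa)}\|\la\nabla\ra^{s+\kappa}P_{N_1}v\|_{L^2_tL^\infty_{x,y}}\|P_{N_2}u\|_{Y^0}$ and then interpolates, exactly as you do. You merely write out the geometric mean and the exponent bookkeeping that the paper leaves as ``in view of Lemma~\ref{bil lem 1} and interpolation,'' so this is the same proof.
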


\begin{proof}
We only prove (i), the proof of (ii) follows in a similar way. Notice that by definition and the support property of $P_{N_1}$ we have $\|P_{N_1} v\|_{Y^0}\lesssim N_1^{-s}\|P_{N_1}v_0\|_{H_{x,y}^s}$. Thus in view of Lemma \ref{bil lem 1} and interpolation, it suffices to show
\[
\|P_{N_1} v P_{N_2}u\|_{L_{t,x,y}^2}\lesssim N_1^{-s-\kappa}\|\la\nabla\ra^{s+\kappa}P_{N_1}v\|_{L_t^2 L_{x,y}^\infty}\|P_{N_2}u\|_{Y^0}.
\]
This however follows immediately from the H\"older's and Bernstein's inequalities and the embedding $Y^0 \hookrightarrow L_t^\infty L_{x,y}^2$
\begin{align*}
\|P_{N_1} v P_{N_2}u\|_{L_{t,x,y}^2}&\lesssim \|P_{N_1} v \|_{L_t^2 L_{x,y}^\infty}\|P_{N_2}u\|_{L_t^\infty L_{x,y}^2}\nonumber\\
&\lesssim N_1^{-s-\kappa}\|\la\nabla\ra^{s+\kappa}P_{N_1}v\|_{L_t^2 L_{x,y}^\infty}\|P_{N_2}u\|_{Y^0}.
\end{align*}
The proof is therefore complete.
\end{proof}

We now derive a suitable upper bound for the nonlinear term $\|v w^2\|_{N^1}$ with $v=e^{it\Delta}v_0$ and $v_0\in H_{x,y}^s$.
\begin{lemma}[Nonlinear estimate II]\label{5.5 lem}
Let $s\in\R$, $\kappa>-2s+3$ and $v_0\in H^s_{x,y}$. Set also that $\beta:=2s+\kappa-1>2$ and $v=e^{it\Delta}v_0$. Then
\begin{align*}
\|v w^2\|_{N^1}\lesssim\|v_0\|^{\frac12}_{H_{x,y}^s}\|\la\nabla\ra^{s+\kappa} v\|^{\frac12}_{L_t^2 L_{x,y}^\infty}\|w\|_{X^1}^2.
\end{align*}
\end{lemma}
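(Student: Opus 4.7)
By Proposition \ref{prop:dual}, it suffices to bound the quadrilinear form
\[
J(v,w,w,u):=\int_\R\int_{\R^3\times\T} v\,w^2\,\bar u\,dt\,dx\,dy
\]
uniformly over all $u$ with $\|u\|_{Y^{-1}}\leq 1$. I decompose $v=\sum_{N_0}P_{N_0}v$, $w=\sum_{N_i}P_{N_i}w$ ($i=1,2$), $u=\sum_{N}P_N u$ and assume without loss of generality $N_1\geq N_2$; by Fourier support considerations, $J$ vanishes unless $N\lesssim \max(N_0,N_1)$.

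I split into the two regimes $N_0\geq N_1$ and $N_0<N_1$, and in each regime I estimate $|J|$ by Cauchy-Schwarz in $L_{t,x,y}^2$ after grouping the four factors into two bilinear pairs. In the first regime I pair $(P_{N_0}v,P_N u)$ and $(P_{N_1}w,P_{N_2}w)$; the first pair is controlled by Lemma \ref{bil lem 3}(i), producing the crucial decay factor $N_0^{-\beta/2}$, while the second pair is controlled by Lemma \ref{bil lem 1}. In the second regime I pair $(P_{N_1}w,P_N u)$ with Lemma \ref{bil lem 1} and $(P_{N_0}v,P_{N_2}w)$ with the appropriate branch of Lemma \ref{bil lem 3} (with further sub-cases according to whether $N_0$ is larger or smaller than $N_2$). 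The derivative inside the $N^1$-norm is absorbed into the test function via $\|P_N u\|_{Y^0}\sim N\|P_N u\|_{Y^{-1}}$, and the two $w$-factors are converted from $Y^0$ to $X^1$ via $\|P_{N_i}w\|_{Y^0}\lesssim N_i^{-1}\|P_{N_i}w\|_{X^1}$.

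After the bilinear bounds, the remaining task is dyadic summation. The decisive point is the hypothesis $\beta>2$: Lemma \ref{bil lem 3}(i) delivers $N_0^{-\beta/2}$ which, combined with the factor $N\leq N_0$ arising from the derivative on $u$, still leaves the strictly negative power $N_0^{1-\beta/2}=N_0^{-\varepsilon}$ with $\varepsilon=\beta/2-1>0$. Using the $\delta$-smallness factors of both bilinear lemmas at off-diagonal scales, Cauchy-Schwarz in each dyadic index closes the estimate. The main technical obstacle is that $L_{x,y}^\infty$ admits no Littlewood-Paley square-function characterization, so the factor $\|\la\nabla\ra^{s+\kappa}P_{N_0}v\|^{1/2}_{L_t^2 L_{x,y}^\infty}$ cannot be $\ell^2$-summed in $N_0$ directly; instead I use the uniform bound $\|\la\nabla\ra^{s+\kappa}P_{N_0}v\|_{L_t^2 L_{x,y}^\infty}\lesssim \|\la\nabla\ra^{s+\kappa}v\|_{L_t^2 L_{x,y}^\infty}$, pair it with Cauchy-Schwarz on the $\|P_{N_0}v_0\|_{H_{x,y}^s}$ factor, and absorb the lossy $\ell^1$-summation into the room $\varepsilon>0$ afforded by $\beta>2$, which is precisely where the strict inequality $\kappa>-2s+3$ is used sharply.
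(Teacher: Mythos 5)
Your duality reduction and your first regime ($N_0\ge N_1$, the $v$-frequency dominant) are sound and essentially coincide with the paper's treatment of the corresponding cases: there the constraint $N\lesssim N_0$ makes your bookkeeping $N\cdot N_0^{-\beta/2}\lesssim N_0^{1-\beta/2}$ correct, and $\beta>2$ closes the dyadic sums, exactly as in the paper's cases where Lemma \ref{bil lem 3}(i) is used. The genuine gap is in your second regime, and it is a structural one caused by your choice of pairing. Consider the configuration $N\sim N_1\gg N_2$ and $N_0\lesssim 1$ (the dual function $u$ and the larger $w$-factor at the top frequency, $v$ and the smaller $w$-factor at low frequency); this survives the convolution constraint because the two largest frequencies are comparable, so it must be estimated. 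With your grouping, Lemma \ref{bil lem 1} applied to the pair $(P_{N_1}w,P_{N}u)$ carries the prefactor $\min(N_1,N)\sim N_1$; after the conversions $\|P_Nu\|_{Y^0}\sim N\|P_Nu\|_{Y^{-1}}$ and $\|P_{N_1}w\|_{Y^0}\sim N_1^{-1}\|P_{N_1}w\|_{Y^1}$ this pair contributes a net uncancelled factor $\sim N_1$, while the other pair $(P_{N_0}v,P_{N_2}w)$ yields at best $N_0^{-\beta/2}N_2^{-1}$, which is decoupled from $N_1$ (take $N_0=N_2=1$). You are then left with $\sum_{N\sim N_1}N_1\|P_{N_1}w\|_{Y^1}\|P_Nu\|_{Y^{-1}}$, which cannot be bounded by $\|w\|_{X^1}\|u\|_{Y^{-1}}$ (concentrate $w$ and $u$ at a single large dyadic frequency). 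Substituting Lemma \ref{bil lem 2} for Lemma \ref{bil lem 1} on this pair does not help: the frequency prefactor disappears, but so does the $N_1^{-1}$ gain from the $Y^1$-conversion, and the $u$-conversion still costs $N\sim N_1$. So the claimed absorption of the derivative ``via $\|P_Nu\|_{Y^0}\sim N\|P_Nu\|_{Y^{-1}}$'' fails precisely in this regime; your summary argument ($N\le N_0$ combined with $N_0^{-\beta/2}$) only applies when the $v$-frequency is dominant.

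The paper closes this configuration (its cases (ia) and (iia)) with the opposite grouping, and that is the fix you need: when $N$ is comparable to the larger $w$-frequency, pair $v$ with $u$ via Lemma \ref{bil lem 3}(ii) (no decay is needed there, only the off-diagonal factor), and pair the two $w$-factors with each other via Lemma \ref{bil lem 2} (or Lemma \ref{bil lem 1}, whose prefactor $N_2$ is cancelled by the $Y^0\to Y^1$ conversion of the low-frequency $w$). In that grouping the high-frequency $w$ enters through its bare $Y^0$-norm, so its conversion supplies exactly the factor $N_1^{-1}$ that cancels the loss $N\sim N_1$ coming from $u$, and the remaining off-diagonal factors give summability. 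With this modification in the second regime (and keeping your first regime as is), the argument matches the paper's proof.
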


\begin{proof}
Let $u$ satisfy $\|u\|_{Y^{-1}}\leq 1$. Write
\[vw^2u=\sum_{N_0,\cdots, N_3\geq 1}P_{N_0}u P_{N_1}v P_{N_2}w P_{N_3}w=:
\sum_{N_0,\cdots, N_3\geq 1}P_0 u P_1 v P_2 w P_3 w.\]
By symmetry we may assume $N_2\leq N_3$. By the support properties of the Littlewood-Paley decomposition it suffices to consider the following cases:
\begin{align*}
\text{(i) }&\text{$N_0\leq N_3$, $N_1\leq N_2\leq N_3$, $N_3\sim \max\{N_0,N_2\}$},\\
\text{(ii) }&\text{$N_0\leq N_3$, $N_2\leq N_1\leq N_3$, $N_3\sim \max\{N_0,N_1\}$},\\
\text{(iii) }&\text{$N_0\leq N_1$, $N_2\leq N_3\leq N_1$, $N_1\sim \max\{N_0,N_3\}$}.
\end{align*}
We prove the estimates case by case using the bilinear estimates.
\begin{itemize}
\item[(ia)]$N_0\sim N_3\geq N_2\geq N_1$. Using the bilinear estimates and Cauchy-Schwarz we obtain
\begin{align*}
&\lesssim \sum_{N_0\sim N_3\geq N_2\geq N_1}
\bg(\frac{N_1}{N_0}+\frac{1}{N_1}\bg)^{\frac{\delta}{2}}
\bg(\frac{N_2}{N_3}+\frac{1}{N_2}\bg)^{\frac{\delta}{2}}
\nonumber\\
&\times\|P_{1}v_0\|^{\frac12}_{H_{x,y}^s}\|\la\nabla\ra^{s+\kappa}P_{1}v\|^{\frac12}_{L_t^2 L_{x,y}^\infty}\|P_{0}u\|_{Y^0}\|P_{2}w\|_{Z'}\|P_{3}w\|_{Y^0}\nonumber\\
&\lesssim \|v_0\|^{\frac12}_{H_{x,y}^s}\|\la\nabla\ra^{s+\kappa}v\|^{\frac12}_{L_t^2 L_{x,y}^\infty}\|w\|_{Z'}\sum_{N_0\sim N_3}\|P_0 u\|_{Y^{-1}}
\|P_3 w\|_{Y^1}\nonumber\\
&\lesssim \|v_0\|^{\frac12}_{H_{x,y}^s}\|\la\nabla\ra^{s+\kappa}v\|^{\frac12}_{L_t^2 L_{x,y}^\infty}\|w\|_{Z'}\|w\|_{X^1}\|u\|_{Y^{-1}}\nonumber\\
&\lesssim \|v_0\|^{\frac12}_{H_{x,y}^s}\|\la\nabla\ra^{s+\kappa}v\|^{\frac12}_{L_t^2 L_{x,y}^\infty}\|w\|^2_{X^1}.
\end{align*}

\item[(ib)]$N_3\sim N_2\geq N_0, N_1$. We have
\begin{align*}
&\lesssim \sum_{N_3\sim N_2\geq N_0, N_1}
N_0\bg(\frac{N_0}{N_2}+\frac{1}{N_0}\bg)^{\frac{\delta}{2}}
\bg(\frac{N_1}{N_3}+\frac{1}{N_1}\bg)^{\frac{\delta}{2}}
\nonumber\\
&\times\|P_{1}v_0\|^{\frac12}_{H_{x,y}^s}\|\la\nabla\ra^{s+\kappa}P_{1}v\|^{\frac12}_{L_t^2 L_{x,y}^\infty}
\|P_{3}w\|_{Y^0}\|P_{0}u\|_{Y^0}\|P_{2}w\|_{Y^0}\nonumber\\
&\lesssim \|v_0\|^{\frac12}_{H_{x,y}^s}\|\la\nabla\ra^{s+\kappa}v\|^{\frac12}_{L_t^2 L_{x,y}^\infty}\|u\|_{Y^{-1}}\sum_{N_3\sim N_2}\|P_2 w\|_{Y^{1}}
\|P_3 w\|_{Y^1}\nonumber\\
&\lesssim \|v_0\|^{\frac12}_{H_{x,y}^s}\|\la\nabla\ra^{s+\kappa}v\|^{\frac12}_{L_t^2 L_{x,y}^\infty}\|w\|^2_{X^1}.
\end{align*}

\item[(iia)]$N_0\sim N_3\geq N_1\geq N_2$. We have
\begin{align*}
&\lesssim \sum_{N_0\sim N_3\geq N_1\geq N_2}
\bg(\frac{N_2}{N_0}+\frac{1}{N_2}\bg)^{\frac{\delta}{2}}
\bg(\frac{N_1}{N_3}+\frac{1}{N_1}\bg)^{\frac{\delta}{2}}
\nonumber\\
&\times\|P_{1}v_0\|^{\frac12}_{H_{x,y}^s}\|\la\nabla\ra^{s+\kappa}P_{1}v\|^{\frac12}_{L_t^2 L_{x,y}^\infty}\|P_{3}w\|_{Y^0}\|P_{0}u\|_{Y^0}\|P_{2}w\|_{Z'}\nonumber\\
&\lesssim \|v_0\|^{\frac12}_{H_{x,y}^s}\|\la\nabla\ra^{s+\kappa}v\|^{\frac12}_{L_t^2 L_{x,y}^\infty}\|w\|^2_{X^1}.
\end{align*}

\item[(iib)]$N_1\sim N_3\geq N_2, N_0$. We have
\begin{align*}
&\lesssim \sum_{N_1\sim N_3\geq N_2, N_0}
N_1^{-\frac{\beta}{2}}\bg(\frac{N_0}{N_1}+\frac{1}{N_0}\bg)^{\frac{\delta}{2}}
\bg(\frac{N_2}{N_3}+\frac{1}{N_2}\bg)^{\frac{\delta}{2}}
\nonumber\\
&\times\|P_{1}v_0\|^{\frac12}_{H_{x,y}^s}\|\la\nabla\ra^{s+\kappa}P_{1}v\|^{\frac12}_{L_t^2 L_{x,y}^\infty}\|P_{0}u\|_{Y^0}\|P_{2}w\|_{Z'}\|P_{3}w\|_{Y^0}\nonumber\\
&\lesssim \|v_0\|^{\frac12}_{H_{x,y}^s}\|\la\nabla\ra^{s+\kappa}v\|^{\frac12}_{L_t^2 L_{x,y}^\infty}\|u\|_{Y^{-1}}
\sum_{N_1\sim N_3}N_1^{-\frac{\beta}{2}} \|P_3 w\|_{Y^1}\nonumber\\
&\lesssim \|v_0\|^{\frac12}_{H_{x,y}^s}\|\la\nabla\ra^{s+\kappa}v\|^{\frac12}_{L_t^2 L_{x,y}^\infty}\|w\|^2_{X^1}.
\end{align*}

\item[(iiia)]$N_1\sim N_0\geq N_3\geq N_2$. We have
\begin{align*}
&\lesssim \sum_{N_1\sim N_0\geq N_3\geq N_2}
N_1^{-\frac{\beta}{2}}\bg(\frac{N_2}{N_1}+\frac{1}{N_2}\bg)^{\frac{\delta}{2}}
\bg(\frac{N_3}{N_0}+\frac{1}{N_3}\bg)^{\frac{\delta}{2}}
\nonumber\\
&\times\|P_{1}v_0\|^{\frac12}_{H_{x,y}^s}\|\la\nabla\ra^{s+\kappa}P_{1}v\|^{\frac12}_{L_t^2 L_{x,y}^\infty}\|P_{2}w\|_{Y^0}\|P_{3}w\|_{Z'}\|P_{0}w\|_{Y^0}\nonumber\\
&\lesssim \|v_0\|^{\frac12}_{H_{x,y}^s}\|\la\nabla\ra^{s+\kappa}v\|^{\frac12}_{L_t^2 L_{x,y}^\infty}\|w\|_{X^1}^2\sum_{N_1\sim N_0}N_1^{1-\frac{\beta}{2}}\|P_{0}w\|_{Y^{-1}}\nonumber\\
&\lesssim \|v_0\|^{\frac12}_{H_{x,y}^s}\|\la\nabla\ra^{s+\kappa}v\|^{\frac12}_{L_t^2 L_{x,y}^\infty}\|w\|^2_{X^1},
\end{align*}
where we also used the fact that $\beta>2$.

\item[(iiib)]$N_1\sim N_3\geq N_2,N_0$. We have
\begin{align*}
&\lesssim \sum_{N_1\sim N_3\geq N_2,N_0}
N_1^{-\frac{\beta}{2}}\bg(\frac{N_2}{N_3}+\frac{1}{N_2}\bg)^{\frac{\delta}{2}}
\bg(\frac{N_0}{N_1}+\frac{1}{N_0}\bg)^{\frac{\delta}{2}}
\nonumber\\
&\times\|P_{1}v_0\|^{\frac12}_{H_{x,y}^s}\|\la\nabla\ra^{s+\kappa}P_{1}v\|^{\frac12}_{L_t^2 L_{x,y}^\infty}\|P_{0}u\|_{Y^0}\|P_{2}w\|_{Z'}\|P_{3}w\|_{Y^0}\nonumber\\
&\lesssim \|v_0\|^{\frac12}_{H_{x,y}^s}\|\la\nabla\ra^{s+\kappa}v\|^{\frac12}_{L_t^2 L_{x,y}^\infty}\|w\|_{Z'}\|u\|_{Y^{-1}}\sum_{N_1\sim N_3}N_1^{-\frac{\beta}{2}}\|P_{3}w\|_{Y^{1}}\nonumber\\
&\lesssim \|v_0\|^{\frac12}_{H_{x,y}^s}\|\la\nabla\ra^{s+\kappa}v\|^{\frac12}_{L_t^2 L_{x,y}^\infty}\|w\|^2_{X^1}.
\end{align*}
\end{itemize}
The desired claim then follows from Proposition \ref{prop:dual}.
\end{proof}

\section{Proof of Theorem \ref{main thm}}\label{bigsec4}
In this section we give the proof of Theorem \ref{main thm}. We collect some important auxiliary results given in Section \ref{sec 4.1} to Section \ref{sec 4.4}. Having all the preliminaries we are then able to give the complete proof of Theorem \ref{main thm} in Section \ref{sec 4.5}.
%By the embedding $H_{x,y}^{s_1}\hookrightarrow H_{x,y}^{s_2}$ for $s_1>s_2$ we will also only consider the (harder) case $s<0$, where even the mass conservation of the NLS is \textit{a priori} unavailable.

\subsection{Local theory}\label{sec 4.1}
Assume that a solution $u$ of \eqref{nls} with initial data $u_0$ is decomposed to $u=v+w$, $u_0=v_0+w_0$ with $v=e^{it\Delta}v_0$. Then $w$ solves the NLS
\begin{align}\label{nls_w}
(i\pt_t+\Delta_{x,y})w=|u|^2 u=|w|^2 w+e
\end{align}
with $w(0)=w_0$ and $e:=|u|^2u-|w|^2w$. In the following we derive a suitable local theory for the problem \eqref{nls_w}, which will be serving as the starting point of our inductive proof of Theorem \ref{main thm}.

\begin{lemma}[Local theory]\label{local theo lem}
Let $w_0\in H_{x,y}^1$ and $v_0\in H_{x,y}^s$ with $s\in\R$. Suppose that there exists some $\kappa>\max\{-2s+3,1-s\}$ such that \[\|v\|_{L_{t,x,y}^4}+\|\la\nabla\ra^{s+\kappa} v\|_{L_t^2 L_{x,y}^\infty}<\infty,\]
where $v=e^{it\Delta}v_0$. Then there exists some $T>0$ depending on $\kappa,w_0,v_0$ such that \eqref{nls_w} possesses a solution
$w\in X^1(-T,T)$.
\end{lemma}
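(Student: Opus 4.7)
The plan is to run a Banach fixed-point argument in $X^1(-T,T)$ for the Duhamel operator
\begin{equation*}
\Phi(w)(t) := e^{it\Delta}w_0 - i\int_0^t e^{i(t-\tau)\Delta}|v+w|^2(v+w)(\tau)\,d\tau
\end{equation*}
on the closed ball $B_R := \{w \in X^1(-T,T) : \|w\|_{X^1(-T,T)} \leq R\}$, with $R$ comparable to $\|w_0\|_{H^1_{x,y}}$ and $T>0$ chosen small depending on $w_0$, $v_0$ and the two assumed finite norms of $v$. Expanding the nonlinearity as $|v+w|^2(v+w) = |w|^2w + \mathcal{E}(v,w)$, where $\mathcal{E}(v,w)$ is a finite sum of degree-three monomials in $v,\bar v,w,\bar w$ each carrying at least one factor of $v$ or $\bar v$, the linear/Duhamel theory for the pair $(X^1,N^1)$ reduces matters to bounds on $\||w|^2w\|_{N^1(-T,T)}$ and $\|\mathcal{E}(v,w)\|_{N^1(-T,T)}$.

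\textbf{Nonlinear estimates.} The pure cubic piece is controlled by Lemma \ref{9.1 lem}, yielding $\||w|^2w\|_{N^1}\lesssim \|w\|_{X^1}\|w\|_{Z'}^2$. The $vw^2$-type mixed terms are handled directly by Lemma \ref{5.5 lem}, producing the factor $\|v_0\|_{H^s}^{1/2}\|\la\nabla\ra^{s+\kappa}v\|_{L^2_tL^\infty_{x,y}}^{1/2}\|w\|_{X^1}^2$; the hypothesis $\kappa>-2s+3$ of that lemma is part of our standing assumption. For the remaining $v^2w$- and $v^3$-type monomials I would prove analogous bounds by repeating the Littlewood–Paley argument from the proof of Lemma \ref{5.5 lem}, distributing the extra $v$-factors via the bilinear inequalities of Lemma \ref{bil lem 3}, pairing the remaining $w$-factor in $Y^0$ (resp.\ the test function in $Y^{-1}$ via the duality of Proposition \ref{prop:dual}), and summing the dyadic series using the gain $N_1^{-\beta/2}$ with $\beta = 2s+\kappa-1>2$. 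The extra hypothesis $\kappa>1-s$, equivalent to $s+\kappa>1$, is exactly what is needed to close these multilinear sums against the $H^1$-valued norm on the $w$-factors. Difference estimates for $\Phi(w_1)-\Phi(w_2)$ follow line-by-line from the same multilinear calculations.

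\textbf{Closing the contraction and main obstacle.} The contribution of $\mathcal{E}(v,w)$ is made small by shrinking $T$: both $\|v\|_{L^4_{t,x,y}(-T,T)}$ and $\|\la\nabla\ra^{s+\kappa}v\|_{L^2_tL^\infty_{x,y}(-T,T)}$ tend to $0$ as $T\downarrow 0$ by dominated convergence, using the hypothesized global finiteness of both norms. The genuine difficulty is the pure cubic $|w|^2w$: on $\R^3\times\T$ the equation is critical in $H^1$, so smallness cannot be obtained by a naive subcritical shrinking of time. I would invoke here the standard deterministic local theory for \eqref{nls} developed by Hani–Pausader and refined by Zhao: the linear flow $e^{it\Delta}w_0$ of an $H^1$-datum has finite $Z$-norm on every bounded interval, and on sufficiently short intervals this $Z$-norm can be made arbitrarily small by dominated convergence, which — through a short bootstrap inside $B_R$ — controls $\|w\|_{Z'(-T,T)}$ and lets the cubic contribution $\|w\|_{X^1}\|w\|_{Z'}^2$ be absorbed into $R/2$. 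Combining the two ingredients, $\Phi$ maps $B_R$ into itself and is a contraction for $T$ small, and its unique fixed point is the desired local solution $w\in X^1(-T,T)$.
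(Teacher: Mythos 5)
Your proposal is correct and follows essentially the same strategy as the paper: a contraction mapping argument for the Duhamel operator in $X^1(-T,T)$, with Lemma~\ref{9.1 lem} together with the smallness of the $Z'$-norm of the linear flow on short intervals controlling the energy-critical pure cubic piece $|w|^2w$, and the assumed global finiteness of $\|v\|_{L^4_{t,x,y}}$ and $\|\la\nabla\ra^{s+\kappa}v\|_{L^2_tL^\infty_{x,y}}$ (combined with $s+\kappa>1$) making the $v$-contributions vanish as $T\downarrow 0$ by dominated convergence. You correctly identify the main obstruction — that the critical cubic term cannot be made small by a subcritical shrinking of time alone — and address it exactly as the paper does, via smallness of $\|e^{it\Delta}w_0\|_{Z'(-T,T)}$ for short $T$.

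The differences are implementation-level. First, the paper runs the contraction directly on the two-constraint set $S(I)=\{u\in X^1:\ \|u\|_{X^1}\leq 2C_0,\ \|u\|_{Z'}\leq 2\delta\}$, so the $Z'$-smallness is automatically propagated by the iteration, whereas you work in the bare $X^1$-ball $B_R$ and recover the $Z'$-control of iterates by a supplementary bootstrap; both devices close the argument, the paper's being marginally cleaner to state. Second, you invoke Lemma~\ref{5.5 lem} for the mixed $vw^2$-term and propose to adapt its Littlewood--Paley argument to $v^2w$ and $v^3$, while the paper treats $v^3$ with the much lighter H\"older bound $\|v^3\|_{N^1}\lesssim\|\la\nabla\ra v\|_{L^2_tL^\infty_{x,y}}\|v\|^2_{L^4_{t,x,y}}$ and subsumes the remaining cross-terms into a shorthand, reserving Lemma~\ref{5.5 lem} for the short-time stability Lemma~\ref{short time stab lem}. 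For $v^3$ your heavier route is unnecessary, but for the cross-terms $v^2w$ and $vw^2$ — where a full derivative must land on $w$ when pairing against the $Y^{-1}$ test function — a pure H\"older estimate does not close as cleanly, and your Lemma~\ref{5.5 lem}-based treatment supplies detail that the paper's presentation leaves implicit. In short, same method, slightly more explicit bookkeeping on your side.
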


\begin{remark}\label{remark 4.2}
In our case the condition $\kappa>1-s$ is redundant. Indeed, we will set $\kappa=\frac{K}{2}-\frac13$ in view of the $W$-norm defined by \eqref{w norm}. This requires particularly that $K>\max\{-4s+\frac{20}{3},-2s+\frac83\}$. However, for $K\in\N$ and $K>-4s+\frac{20}{3}$ stated as assumptions in Theorem \ref{main thm}, the latter condition is always fulfilled.
\end{remark}

\begin{proof}
In the following all space-time norms are taken over the interval $I=(-T,T)$ and thus the dependence of such norms on $I$ will be neglected in the upcoming calculations. Let $C_0>0$ be given such that $\|e^{it\Delta}w_0\|_{X^1}\leq C_0$ (the $H_{x,y}^1$-norm of $w_0$ is also merged to the constant $C_0$). By shrinking $T$ to zero we may also assume that $\|e^{it\Delta}w_0\|_{Z'}\leq \delta$ for some to be determined $\delta$. Define the space $S(I)$ by
\begin{align*}
S(I):=\{u\in X^1:\|u\|_{X^1}\leq 2C_0,\,\|u\|_{Z'}\leq 2\delta\}.
\end{align*}
Define also the contraction mapping $\Phi$ by
\[\Phi(w)(t):=e^{it\Delta}w_0-i\int_0^t e^{i(t-s)\Delta}(|w+v|^2 (w+v))(s)\,ds.\]
We aim to show that $\Phi$ defines a contraction on $S(I)$, from which the desired claim follows by combining with the Banach fixed point theorem. Using Lemma \ref{prop:dual} and the embedding $X^1\hookrightarrow Z'$ applied to the Duhamel term we first obtain
\begin{align*}
\|\Phi(w)\|_{X^1}&\leq \|e^{it\Delta}w_0\|_{X^1}+\||w+v|^2(w+v)\|_{N^1}
\leq C_0+C\|v^3\|_{N^1}+C\|w^3\|_{N^1},\\
\nonumber\\
\|\Phi(w)\|_{Z'}&\leq \|e^{it\Delta}w_0\|_{Z'}+C\||w+v|^2(w+v)\|_{N^1}
\leq \delta+C\|v^3\|_{N^1}+C\|w^3\|_{N^1}.
\end{align*}
Using the H\"older's inequality
\begin{align*}\|\la\nabla\ra v_1(v_2v_3v_4)\|_{L_{t,x,y}^1}\leq
\|\la\nabla\ra v_1\|_{L_t^2 L_{x,y}^\infty}\|v_2\|_{L_{t,x,y}^4}\|v_3\|_{L_{t,x,y}^4}\|v_4\|_{L_t^\infty L_{x,y}^2}
\end{align*}
and the embedding $X^\alpha\hookrightarrow L_t^\infty H_{x,y}^\alpha$ we obtain
\begin{align*}
\|v^3\|_{N^1}&\lesssim \|\la\nabla\ra v\|_{L_t^2L_{x,y}^\infty}\|v\|^2_{L_{t,x,y}^4}.
\end{align*}
Using Lemma \ref{9.1 lem} we also infer that
\begin{align*}
\|w^3\|_{N^1}&\lesssim \|w\|_{Z'}^2\|w\|_{X^1}.
\end{align*}
We now shrink $T$ if necessary such that
\[\|\la\nabla\ra  v\|_{L_t^2 L_{x,y}^\infty}\lesssim \|\la\nabla\ra^{s+\kappa} v\|_{L_t^2 L_{x,y}^\infty}<\delta^2,\]
where we also used the fact that $s+\kappa>1$. Then
\begin{align*}
\|\Phi(w)\|_{X^1}\leq C_0+C\delta^2(1+C_0)^2,\quad
\|\Phi(w)\|_{Z'}\leq \delta+C\delta^2(1+C_0)^2.
\end{align*}
Thus it follows that $\Phi$ maps $S(I)$ to $S(I)$ when choosing $\delta$ and then $T$ sufficiently small. The proof for showing that $\Phi$ is a contraction is similar, we omit the details here.
\end{proof}

\subsection{Interaction Morawrtz inequality}\label{sec 4.2}
Our goal in this subsection is to derive an interaction Morawetz inequality for the solution $w$ of \eqref{nls_w}, which is stated in next lemma. Since the interaction Morawetz inequality is involved with four variables, following the convention in literature we shall refer $x,y$ to as the $\R^3$-variables, while $z,\zt$ are the $\T$-ones.
\begin{lemma}[Interaction Morawetz inequality]\label{inter mora lem}
Let $w$ be a solution of \eqref{nls_w}. Then for a given time slot $J\subset\R$ we have
\begin{align*}
\|w\|_{L_{t,x}^4 L_z^2}^4&\lesssim \|w\|^2_{L_t^\infty L_{x,z}^2}\|w\|_{L_t^\infty H_x^{\frac12}L_z^2}^2\nonumber\\
&+\|v\|_{L_t^2 L_{x,z}^\infty}\|v\|^2_{L_{t,x,z}^4}
\| w\|_{L_t^\infty L_x^2 H_z^{\frac12+}}\left(\| w\|_{L_t^\infty L_{x,y}^2}\| w\|_{L_t^\infty H_{x,z}^1}
+\| w\|^2_{L_t^\infty L_x^2 H_z^{\frac12+}}\right)
\nonumber\\
&+\|v\|_{L_t^2 L_{x,z}^\infty}\|w\|^2_{L_{t,x}^4L_z^2}
\| w\|_{L_t^\infty L_x^2 H_z^{\frac12+}}\left(\| w\|_{L_t^\infty L_{x,y}^2}\| w\|_{L_t^\infty H_{x,z}^1}
+\| w\|^2_{L_t^\infty L_x^2 H_z^{\frac12+}}\right)
\nonumber\\
&+\|v\|_{L_t^2 L_{x,y}^\infty} \|v\|^3_{L_t^6 L_{x,y}^3}
\|w\|^2_{L_t^\infty H_x^{\frac12}L_z^2}
+\|v\|_{L_t^2 L_{x,y}^\infty}\|w\|_{L_{t,x}^4 L_z^2}^2\| w\|^3_{L_t^\infty L_x^2 H_z^{\frac12+}}\nonumber\\
&+\|\la\nabla\ra v\|_{L_t^2 L_{x,y}^\infty} \|v\|^3_{L_t^6 L_{x,y}^3}
\|w\|_{L_{t}^\infty L_{x,z}^2}^2
+\|\la\nabla\ra v\|_{L_t^2 L_{x,z}^\infty}\| w\|^2_{L_{t,x}^4L_z^2}\|w\|_{L_{t}^\infty L_{x,z}^2}^2\|w\|_{L_t^\infty L_x^2 H_z^{\frac12+}},
\end{align*}
where all space-time norms are taken over the time slot $J$.
\end{lemma}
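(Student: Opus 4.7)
\medskip

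\noindent\textbf{Proof proposal.} The plan is to adapt the standard interaction Morawetz functional for cubic NLS on $\R^3$ to the waveguide setting, treating the $\R^3$ direction as the one that provides genuine monotonicity and integrating the $\T$ direction against the $L^2_z$ mass. Concretely, I would define
\begin{align*}
M(t):=\iint_{\R^3\times\R^3}\iint_{\T\times\T}|w(t,x,z)|^2\,a(x-y)\cdot\operatorname{Im}(\bar w\nabla_x w)(t,y,\zt)\,dz\,d\zt\,dx\,dy,
\end{align*}
with $a(x)=x/|x|$ (smoothed at the origin as usual). Since $\|w\|_{L^4_{t,x}L^2_z}^4=\int\!\!\int\big(\int_\T|w|^2dz\big)^2dx\,dt$, the LHS of the claimed inequality is precisely the quartic output of this $\R^3$-type Morawetz quantity applied to the mass profile $\mu(t,x):=\|w(t,x,\cdot)\|_{L^2_z}^2$.

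Next I would compute $\frac{d}{dt}M(t)$ by plugging in the perturbed equation $(i\pt_t+\Delta_{x,z})w=|w|^2w+e$. For the unperturbed part one follows the standard $\R^3$ interaction Morawetz calculation verbatim, with the $z,\zt$ integrations acting only as parameters: the cubic nonlinearity contributes a nonnegative term of size $\|w\|_{L^4_{t,x}L^2_z}^4$, while the kinetic (Laplacian) terms integrate by parts to produce a sign-indefinite mass-momentum interaction whose absolute value is bounded, after $t$-integration and Cauchy--Schwarz, by $\|w\|_{L^\infty_tL^2_{x,z}}^2\|w\|_{L^\infty_tH^{\frac12}_xL^2_z}^2$. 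This yields the first line of the RHS. Thanks to the classical virial positivity on $\R^3$, this is exactly the analogue of the Colliander--Keel--Staffilani--Takaoka--Tao identity with the torus direction playing the role of an inert internal parameter.

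The remaining work is to estimate the contribution of the error term $e=|u|^2u-|w|^2w=|v|^2v+v^2\bar w+2|v|^2w+2v|w|^2+\bar vw^2$ to $\frac{d}{dt}M$. Two types of terms appear: those arising when $e$ enters the mass-flux identity (producing factors like $\|v\|_{L^2_tL^\infty_{x,z}}$) and those arising when $e$ enters the momentum-flux identity (producing factors like $\|\la\nabla\ra v\|_{L^2_tL^\infty}$, due to the $\nabla_x$ falling on $\bar w\nabla_x w$). For each monomial $v^aw^b$ I would use H\"older in $(t,x)$ to peel off one factor in $L^2_tL^\infty_{x,z}$ (or $L^2_tL^\infty_{x,y}$), then distribute the remaining $w$-factors between the $L^4_{t,x}L^2_z$ norm (which appears on both sides and is absorbed or iterated) and $L^\infty_tL^2_{x,z}$; the purely $v$-trilinear term $|v|^2v$ pairs with $\|v\|_{L^6_tL^3_{x,y}}^3$. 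The crucial point is that the functional contains pointwise $z$-evaluations (through products at different torus points), so integrating the $z$ variable freely is not allowed; I therefore invoke the Sobolev embedding $H^{\frac12+}(\T)\hookrightarrow L^\infty(\T)$ on every $w$-factor that needs to be evaluated pointwise in $z$, which is exactly the source of the norms $\|w\|_{L^\infty_tL^2_xH^{\frac12+}_z}$ appearing on the RHS.

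The main obstacle is this last point: on the purely Euclidean $\R^3$, the interaction Morawetz is standard and the error terms are handled by brute H\"older. Here the products like $vw^2\cdot\bar w\nabla_x w$ or $|w|^2w\cdot \text{(test function at distinct $z,\zt$)}$ force one to control $w$ in $L^\infty_z$, which is not summable for rough data; trading one derivative in $z$ for a Sobolev embedding with arbitrarily small loss is what makes the whole estimate close at the level of $H^1_{x,z}$ without spending regularity we do not have. Once this exchange is carried out consistently across all five nonlinear error contributions and both (mass and momentum) versions of each, a careful bookkeeping of the H\"older exponents produces exactly the seven RHS terms listed. Integrating in $t$ over $J$ and using $\sup_{t\in J}|M(t)|\lesssim \|w\|_{L^\infty L^2_{x,z}}^2\|w\|_{L^\infty H^{\frac12}_xL^2_z}^2$ for the boundary contribution completes the proof.
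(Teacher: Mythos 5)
Your setup matches the paper's: the same interaction functional over $(\R^3\times\T)^2$ with weight $\frac{x-y}{|x-y|}$, the same boundary bound $\sup_t|M(t)|\lesssim\|w\|^2_{L_t^\infty L^2_{x,z}}\|w\|^2_{L_t^\infty H_x^{1/2}L_z^2}$, the classical positivity for the kinetic terms, and $H_z^{1/2+}\hookrightarrow L_z^\infty$ to pay for pointwise-in-$z$ factors. Two points are missing, one minor and one essential. The minor one: the torus direction is not literally "inert" — the equation contributes $i\pt_z^2w$ to $\pt_t w$, and you must check that its contribution to $\frac{d}{dt}M(t)$ vanishes; this is true (the extra terms are total $z$- resp. $\zt$-derivatives, which integrate to zero by periodicity), but it is a computation the paper carries out explicitly, not something inherited "verbatim" from the $\R^3$ identity.

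The essential gap is in the momentum-flux error term $\int\frac{x-y}{|x-y|}\cdot\re(\bar e\,\nabla_x w)(t,x,z)\,|w(t,y,\zt)|^2$. Here $\bar e$ and $\nabla_x w$ sit at the \emph{same} point, and $\nabla_x w$ is only controlled in $L_t^\infty L^2_{x,z}$. Your plan — peel off one $v$ in $L_t^2L^\infty_{x,z}$ and distribute the remaining $w$'s between $L_{t,x}^4L_z^2$ and $L_t^\infty L^2_{x,z}$ (with $H_z^{1/2+}\hookrightarrow L_z^\infty$ where needed) — does not close for the monomials $v w^2\nabla_x w$ and $v^2w\nabla_x w$: counting time exponents you need $\tfrac12+\tfrac14+\tfrac14$ from $v\in L_t^2$ and two copies of $w\in L_{t,x}^4L_z^2$, but then the $z$-integration of three $L_z^2$-type factors forces an upgrade of one $w$ to $L_z^\infty$, which would require either $\|w\|_{L_{t,x}^4H_z^{1/2+}}$ (not controlled) or moving that factor to $L_t^\infty L_x^2H_z^{1/2+}$, after which the time exponents no longer sum to $1$. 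Your parenthetical explanation that the $\|\la\nabla\ra v\|_{L_t^2L^\infty}$ factors arise "due to the $\nabla_x$ falling on $\bar w\nabla_x w$" does not address this. The paper's (and Shen--Soffer--Wu's) way out is an integration by parts in $x$ rewriting $\int\frac{x-y}{|x-y|}\cdot\re\big((|u|^2u-|w|^2w)\nabla_x\bar w\big)\,dxdz$ as $-\tfrac12\int\frac{1}{|x-y|}(|u|^4-|w|^4)\,dxdz-\int\frac{x-y}{|x-y|}\cdot\re\big(|u|^2u\,\nabla_x\bar v\big)\,dxdz$, so that every derivative lands on $v$ (absorbed by $\|\la\nabla\ra v\|_{L_t^2L^\infty}$), while the singular weight $\frac{1}{|x-y|}$ in the first piece is handled by Hardy's inequality against $|w(t,y,\zt)|^2$, producing $\|w\|^2_{L_t^\infty \dot H_x^{1/2}L_z^2}$. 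Without this identity and the Hardy step, the stated right-hand side is not reachable by Hölder alone, so your argument as written has a genuine hole precisely where the quoted norms $\|\la\nabla\ra v\|_{L_t^2L^\infty}$ and $\|w\|^2_{L_t^\infty H_x^{1/2}L_z^2}$ must come from.
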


\begin{proof}
Define the interaction Morawetz potential $I(t)$ by
\begin{align*}
I(t):=2\int_{(\R^3\times\T)^2}|w(t,x,z)|^2\frac{x-y}{|x-y|}\cdot\im(\bar w(t,y,\zt)\nabla_x w(t,y,\zt))\,dxdydzd\zt.
\end{align*}
In the case where $w$ is independent of $z$, we have the well-known Morawetz identity (see \cite{intermora})
\begin{align}
\pt_t I(t)&=8\pi\|w(t)\|^4_{L_x^4L_z^2}\label{8.3}+2\int\frac{1}{|x-y|}|w(t,x,z)|^4|w(t,y,\zt)|^2\,dxdydzd\zt\\
&+4\int\bg(\frac{\delta_{ij}}{|x|}-\frac{x_ix_j}{|x|^3}\bg)\re(\pt_i\bar w \pt_j w)(t,x,z)|w(t,y,\zt)|^2 \,dxdydzd\zt
\label{8.5}\\
&-4\int\bg(\frac{\delta_{ij}}{|x|}-\frac{x_ix_j}{|x|^3}\bg)\im(\bar w\pt_j w)(t,x,z)\im(\bar w\pt_k w)(t,y,\zt)\,dxdydzd\zt
\label{8.6}\\
&+4\int\frac{x-y}{|x-y|}\cdot \im(\bar w\nabla w)(t,x,z)\im(\bar w e)(t,y,\zt)\,dxdydzd\zt\label{8.7}\\
&+4\int\frac{x-y}{|x-y|}\cdot\re(\bar e\nabla w)(t,x,z)|w(t,y,\zt)|^2\,dxdydzd\zt
\label{8.7+}\\
&+4\int\frac{1}{|x-y|}\re(\bar ew )(t,x,z)|w(t,y,\zt)|^2\,dxdydzd\zt
\label{8.8}
\end{align}
(we have included the variables $z$ and $\zt$ for later use). In the case where $w$ is not necessarily independent of $z$, we obtain that
\begin{align*}
\pt_t w=i(\Delta_x-|w|^2-e)w+i\pt_z^2 w.
\end{align*}
Thus if we can prove that the term $i\pt_z^2 w$ does not contribute to the Morawetz identity, then the above formula continues to hold for all $w$ which are not necessarily independent of $z$. We claim that this is indeed the case. To see this, direct calculation shows that the contribution of $i\pt_z^2 w$ is given by
\begin{align}
-&\,4\int\mathrm{Im}(\bar{w}(t,y,\zt)\pt^2_z w(t,y,\zt))
\frac{x-y}{|x-y|}\cdot\mathrm{Im}(\bar{w}(t,x,z)\nabla_x w(t,x,z))\,dxdydzd\zt
\label{5.17}\\
-&\,2\int|w(t,y,\zt)|^2\frac{x-y}{|x-y|}\cdot\mathrm{Re}(\pt_z^2\bar{w}(t,x,z)\nabla_x w(t,x,z))\,dxdydzd\zt
\label{5.18}\\
+&\,2\int|w(t,y,\zt)|^2\frac{x-y}{|x-y|}\cdot\mathrm{Re}(\bar{w}(t,x,z)\pt_z^2\nabla_x w(t,x,z))\,dxdydzd\zt.\label{5.19}
\end{align}
Since $\bar{w}\pt_z^2 w=\pt_z(\bar{w}\pt_z w)-|\pt_z w|^2$, $\eqref{5.17}=0$ follows from integration by parts, the periodic boundary condition of $w$ along the $z$-direction and the fact that $|\pt_z w|^2$ is real-valued. Another application of integration by parts yields $\eqref{5.18}+\eqref{5.19}=0$, which completes the proof.

We next estimate \eqref{8.3} to \eqref{8.8} term by term. By the nonnegativity of the integrand of the second term in the r.h.s. of \eqref{8.3} we know that $\int_{J}\eqref{8.3}\,dt\gtrsim \|w\|^4_{L_{t,x}^4L_z^2}$. From an application of the Cauchy-Schwarz inequality and the convexity of $x\mapsto |x|$, it was shown in \cite{intermora} that $\eqref{8.5}+\eqref{8.6}\geq 0$. Also, from \cite{intermora} (in conjunction with the integration over $\T_z\times\T_{\zt}$) we know that
\[|I(t)|\lesssim\|w(t)\|^2_{L_{x,z}^2}\|w(t)\|_{\dot{H}_x^{\frac12}L_{z}^2}^2\leq
\|w\|^2_{L_t^\infty L_{x,z}^2}\|w\|_{L_t^\infty \dot{H}_x^{\frac12}L_z^2}^2 .\]
It remains to estimate \eqref{8.7}, \eqref{8.7+} and \eqref{8.8}. Using H\"older's inequality one obtains that
\begin{align*}
\int_J\eqref{8.7}\,dt\lesssim \|ew\|_{L_{t,x,z}^1}\|w\|_{L_{t}^\infty L_{x,z}^2}\|w\|_{L_{t}^\infty \dot{H}_x^{1}L_z^2}.
\end{align*}
The term $e$ contains products in form of $vw_1w_2$, where $w_i\in\{v,w\}$\footnote{We ignore here and below the difference between a function and its complex conjugate as the Lebesgue-type norms do not see this difference.}. Using H\"older and the embeddings $H_z^{\frac12+}\hookrightarrow L_z^\infty$ and $L_y^4\hookrightarrow L_y^2$ we obtain
\begin{align*}
\|ew\|_{L_{t,x,z}^1}&\lesssim \|v\|_{L_t^2 L_{x,z}^\infty}(\|v\|^2_{L_{t,x}^4L_z^2}
+\|w\|^2_{L_{t,x}^4L_z^2})\|w\|_{L_t^\infty L_x^2 L_z^\infty}\nonumber\\
&\lesssim \|v\|_{L_t^2 L_{x,z}^\infty}\|v\|^2_{L_{t,x,z}^4}\| w\|_{L_t^\infty L_x^2 H_z^{\frac12 +}}
+
\|v\|_{L_t^2 L_{x,z}^\infty}\|w\|^2_{L_{t,x}^4L_z^2}\| w\|_{L_t^\infty L_x^2 H_z^{\frac12 +}}
\end{align*}
which in turn implies
\begin{align*}
\int_J\eqref{8.7}\,dt&\lesssim
\|v\|_{L_t^2 L_{x,z}^\infty}\|v\|^2_{L_{t,x,z}^4}
\| w\|_{L_t^\infty L_{x,z}^2}\| w\|_{L_t^\infty L_x^2 H_z^{\frac12+}}\| w\|_{L_t^\infty H_{x,z}^1}\nonumber\\
&+\|v\|_{L_t^2 L_{x,z}^\infty}\|w\|^2_{L_{t,x}^4L_z^2}
\| w\|_{L_t^\infty L_{x,z}^2}\| w\|_{L_t^\infty L_x^2 H_z^{\frac12+}}\| w\|_{L_t^\infty H_{x,z}^1}.
\end{align*}
To estimate \eqref{8.7+}, we first recall the following identity proved in the proof of \cite[Lem. 4.3]{ShenSofferWu21} (with modifications due to the change of the quintic case to the cubic case):
\begin{align*}
&\int\frac{x-y}{|x-y|}\cdot\re((|u|^2u-|w|^2w)\nabla_x \bar w)(t,x,z)\,dxdz\nonumber\\
=&-\frac12\int\frac{1}{|x-y|}(|u|^4-|w|^4)(t,x,z)\,dxdz-\int\frac{x-y}{|x-y|}\cdot\re(|u|^2u\nabla_x \bar v)(t,x,z)\,dxdz\nonumber\\
=&:I_1+I_2.
\end{align*}
For $I_1$, using Hardy's inequality we first obtain
\begin{align*}
\int_J I_1|w(t,y,\zt)|^2\,dyd\zt dt&\lesssim\int_J\bg|\int(|u|^4-|w|^4)(t,x,z)dxdz\bg|
\times\sup_{x}\|\frac{1}{|x-y|^{\frac12}}w(t,y,\zt)\|^2_{L_{y,\zt}^2}\,dt\nonumber\\
&\lesssim\|w\|^2_{L_t^\infty \dot{H}_x^{\frac12}L_z^2}\int_J\bg|\int(|u|^4-|w|^4)(t,x,z)dxdz\bg|\,dt\nonumber\\
&=:\|w\|^2_{L_t^\infty \dot{H}_x^{\frac12}L_z^2}\int_JI_{11}\,dt.
\end{align*}
In view of interpolation, we only need to consider the terms $v^4$ and $vw^3$ appearing in $I_{11}$. For $v^4$, we use H\"older to deduce the estimate
\begin{align*}
\text{Contribution of $v^4$ }\lesssim \|v\|_{L_t^2 L_{x,y}^\infty} \|v\|^3_{L_t^6 L_{x,y}^3}.
\end{align*}
Using H\"older and Sobolev we also infer that
\begin{align*}
\text{Contribution of $vw^3$ }&\lesssim \|v\|_{L_t^2 L_{x,y}^\infty}\|w\|_{L_{t,x}^4 L_z^2}^2\|w\|_{L_t^\infty L_x^2 L_z^\infty}
\nonumber\\
&\lesssim
\|v\|_{L_t^2 L_{x,y}^\infty}\|w\|_{L_{t,x}^4 L_z^2}^2\| w\|_{L_t^\infty L_x^2H_{z}^{\frac12 +}}.
\end{align*}
For $I_2$, bounding $\frac{x-y}{|x-y|}$ by $1$, we need therefore to estimate the terms $v^3\nabla_x v $ and $w^3 \nabla_x v$. Using H\"older and Sobolev we obtain
\begin{align*}
\text{Contribution of $v^3\nabla_x v $ }\lesssim \|\la\nabla\ra v\|_{L_t^2 L_{x,y}^\infty} \|v\|^3_{L_t^6 L_{x,y}^3}
\end{align*}
and
\begin{align*}
\text{Contribution of $w^3\nabla_x v $ }\lesssim \|\la\nabla\ra v\|_{L_t^2 L_{x,z}^\infty}\| w\|^2_{L_{t,x}^4L_z^2}\| w\|_{L_t^\infty L_x^2 H_z^{\frac12 +}}.
\end{align*}
This implies
\begin{align*}
\int_J I_2|w(t,y,\zt)|^2\,dyd\zt dt&\lesssim \|w\|_{L_{t}^\infty L_{x,z}^2}^2
\|\la\nabla\ra v\|_{L_t^2 L_{x,y}^\infty} \|v\|^3_{L_t^6 L_{x,y}^3}\nonumber\\
&+\|w\|_{L_{t}^\infty L_{x,z}^2}^2\|\la\nabla\ra v\|_{L_t^2 L_{x,z}^\infty}\| w\|^2_{L_{t,x}^4L_z^2}\| w\|_{L_t^\infty L_x^2 H_z^{\frac12 +}}.
\end{align*}
Summing up at this point, we have thus proved
\begin{align*}
\int_J\eqref{8.7+}\,dt&\lesssim \|v\|_{L_t^2 L_{x,y}^\infty} \|v\|^3_{L_t^6 L_{x,y}^3}
\|w\|^2_{L_t^\infty H_x^{\frac12}L_z^2}
+\|v\|_{L_t^2 L_{x,y}^\infty}\|w\|_{L_{t,x}^4 L_z^2}^2\| w\|^3_{L_t^\infty L_x^2 H_z^{\frac12+}}
\nonumber\\
&+\|\la\nabla\ra v\|_{L_t^2 L_{x,y}^\infty} \|v\|^3_{L_t^6 L_{x,y}^3}\|w\|_{L_{t}^\infty L_{x,z}^2}^2\nonumber\\
&+\|\la\nabla\ra v\|_{L_t^2 L_{x,z}^\infty}\| w\|^2_{L_{t,x}^4L_z^2}\|w\|_{L_{t}^\infty L_{x,z}^2}^2\|w\|_{L_t^\infty L_x^2 H_z^{\frac12+}}.
\end{align*}
Finally, we can use H\"older and Hardy to estimate \eqref{8.8} similarly:
\begin{align*}
\int_J\eqref{8.8}\,dt&\lesssim
\|v\|_{L_t^2 L_{x,z}^\infty}\|v\|^2_{L_{t,x,z}^4}
\| w\|^3_{L_t^\infty L_x^2 H_z^{\frac12+}}
+\|v\|_{L_t^2 L_{x,z}^\infty}\|w\|^2_{L_{t,x}^4L_z^2}
\|w\|^3_{L_t^\infty L_x^2 H_z^{\frac12+}}.
\end{align*}
The desired proof then follows from the previous calculations and the fundamental theorem of calculus.
\end{proof}

\subsection{Almost conservation laws}
We prove in this subsection that the mass and energy of a solution $w$ of \eqref{nls_w} are almost conserved in the sense that they are not larger than their initial size twice at any time. To be more precise, we define the mass and energy of $w$ by
\[M(w(t))=\|w(t)\|_{L_{x,y}^2}^2,\quad E(w(t))=\frac{1}{2}\|\nabla w(t)\|_{L_{x,y}^2}^2+\frac{1}{4}\|u(t)\|_{L_{x,y}^4}^4\]
respectively. Then we have the following almost conservation laws.
\begin{lemma}[Almost conservation laws]\label{almost conv lem}
Let $s<0$, $A>0$ and $K$ satisfy \eqref{condition on K}. Suppose that $w$ is a solution of \eqref{nls_w} on $[0,T]$ with some $T>0$. Then there exists some dyadic $N_0=N_0(A)$ with the following property: Assume that $\hat{v}_0$ is supported on $\{|\xi|\geq N_0\}$ and
\begin{gather}
\|u_0\|_{H_{x,y}^s}+\|v_0\|_{H_{x,y}^s}+\|v\|_{W(\R)} \leq A,\label{4.30}\\
M(w(0))\leq A N_0^{-2s},\quad E(w(0))\leq A N_0^{2(1-s)}\label{4.31}.
\end{gather}
Then
\begin{gather*}
\sup_{t\in[0,T]}M(w(t))\leq 2A N_0^{-2s},\quad \sup_{t\in[0,T]}E(w(t))\leq 2A N_0^{2(1-s)}.
\end{gather*}
\end{lemma}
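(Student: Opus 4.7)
The plan is a continuity bootstrap on the joint mass--energy bounds. Define
\[\tau^*:=\sup\{\tau\in[0,T]:M(w(t))\leq 2AN_0^{-2s}\text{ and }E(w(t))\leq 2AN_0^{2(1-s)}\,\,\forall\,t\in[0,\tau]\}.\]
By Lemma \ref{local theo lem} and the continuity in $t$ of $M(w(t))$ and $E(w(t))$, we have $\tau^*>0$. The goal is to improve, on $[0,\tau^*]$, these bounds to $M(w(t))\leq\tfrac{3}{2}AN_0^{-2s}$ and $E(w(t))\leq\tfrac{3}{2}AN_0^{2(1-s)}$, which by continuity forces $\tau^*=T$.

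I would next differentiate $M(w)$ and $E(w)$ along \eqref{nls_w}. Since the unperturbed cubic NLS conserves both quantities, only the error $e=|u|^2u-|w|^2w$ contributes. Writing $e$ as a finite sum of monomials $v^aw^b$ with $a+b=3$ and $a\geq 1$, integrating from $0$ to $\tau\leq\tau^*$, and using integration by parts in the energy identity so that no derivative falls on $e$, the task reduces to bounding a finite list of space--time integrals of the schematic form $\int_0^\tau\!\!\int v^a w^b R\,dxdydt$ with $R\in\{\bar w,\nabla\bar w,|w|^2\bar w\}$.

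These multilinear integrals I would control by H\"older in space--time, Sobolev in the periodic variable (as employed throughout Section \ref{sec 4.2}), and crucially the interaction Morawetz bound of Lemma \ref{inter mora lem} to dominate $\|w\|_{L^4_{t,x}L^2_z}$ whenever it appears. The $w$-factors are estimated via the bootstrap: the bounds on $M,E$ yield
\[\|w\|_{L^\infty_tL^2_{x,y}}\lesssim A^{1/2}N_0^{-s},\qquad\|w\|_{L^\infty_tH^1_{x,y}}\lesssim A^{1/2}N_0^{1-s},\]
and by interpolation the intermediate norms $\|w\|_{L^\infty_tH^{1/2}_xL^2_z}$, $\|w\|_{L^\infty_tL^2_xH^{1/2+}_z}$ and $\|w\|_{L^4_{t,x,y}}$ follow. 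The $v$-factors are estimated by combining the hypothesis $\|v\|_{W(\R)}\leq A$ with the frequency support $\{|\xi|\geq N_0\}$ of $\hat v_0$, which via a dyadic Bernstein argument converts any derivative loss into a negative power of $N_0$; in particular every $v$-norm appearing in the estimates is dominated by $N_0^{-\alpha}A$ for some $\alpha=\alpha(s,K)>0$ determined by the relevant Strichartz pair.

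The main obstacle is the $N_0$-bookkeeping. Each multilinear integral contributes an overall $N_0$-exponent equal to the sum of the positive powers $(1-s)$ and $-s$ coming from the $w$-norms against the negative power $-\alpha$ coming from the $v$-norms; this total must be strictly less than $-2s$ for the mass integrals and strictly less than $2(1-s)$ for the energy integrals. The three separate constraints in \eqref{condition on K} correspond precisely to the three tightest of these budgetary inequalities, arising from three distinct worst-case monomials in the mass and energy identities. Once all budgets are met, choosing $N_0=N_0(A)$ sufficiently large absorbs the implicit constants and yields the improvement from factor $2$ to factor $\tfrac{3}{2}$, closing the bootstrap.
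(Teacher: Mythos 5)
Your overall architecture (continuity bootstrap improving the factor $2$ to $\tfrac32$, the mass increment written as an integral of $e\bar w$, the interaction Morawetz inequality of Lemma \ref{inter mora lem} combined with the high-frequency support of $\hat v_0$ to convert derivative losses on $v$ into negative powers of $N_0$) matches the paper. The genuine gap is in the energy part. Note first that the paper's $E(w)$ is the \emph{modified} energy $\tfrac12\|\nabla w\|_{L^2_{x,y}}^2+\tfrac14\|u\|_{L^4_{x,y}}^4$ with $u=v+w$ in the quartic term, not the energy of $w$ alone. Your claimed reduction --- that after differentiating and integrating by parts ``so that no derivative falls on $e$'' the energy increment becomes a sum of integrals $\int v^a w^b R$ with $R\in\{\bar w,\nabla\bar w,|w|^2\bar w\}$, i.e.\ with no derivative on $v$ and at most one on $w$ --- does not come out of the computation. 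Differentiating the energy produces $-\re\int e\,\overline{\pt_t w}$, and after substituting the equation the dangerous term is $\int e\,\Delta\bar w$: keeping it requires two derivatives on $w$, which the $H^1$-level bootstrap does not control, while integrating by parts puts a derivative on $e$ (contradicting your schematic) and generates terms such as $\int v\,w\,|\nabla w|^2$, where each $\nabla w$ costs $N_0^{1-s}$ and none of the available norms ($\|w\|_{L_t^\infty H^1_{x,y}}$, $\|w\|_{L^4_{t,x}L^2_z}$, $\|v\|_{L_t^2L^\infty_{x,y}}$) can close the $N_0^{2(1-s)}$ budget; one would need e.g.\ $\|\nabla w\|_{L^4}$ or $\|w\|_{L^\infty_{x,y}}$, neither of which is controlled. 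The paper's (and Shen--Soffer--Wu's) way around this is precisely the modified energy: one then has the exact increment bound \eqref{12.8}, $\big|\tfrac{d}{dt}E(w)\big|\leq\big|\int|u|^2u\,\Delta\bar v\,dxdz\big|$, so \emph{both} derivatives land on the free evolution $v$, and the frequency support together with $\|v\|_{W}\leq A$ gives $\|\Delta v\|_{L_t^2L^\infty_{x,y}}\lesssim N_0^{(\frac73-\frac K2-s)+}$, a negative power of $N_0$ since $K>-4s+\tfrac{20}{3}$. Without this identity (or an equivalent device) your scheme cannot close, so this is the missing key idea rather than a bookkeeping issue.

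Two smaller inaccuracies: the three constraints in \eqref{condition on K} do not all arise from the increment estimates here --- the condition $K>-6s+\tfrac23$ is used for the almost sure finiteness of the relevant norms (Lemma \ref{finiteness lemma}), and $K>\tfrac83$ only becomes relevant for $s\geq0$, as the paper's footnote explains; and to obtain the spacetime bound $\|w\|_{L^4_{t,x}L^2_z}\lesssim N_0^{\frac14-s}$ you must absorb the $\|w\|^2_{L^4_{t,x}L^2_z}$ terms appearing on the right-hand side of Lemma \ref{inter mora lem} via Young's inequality, a self-improvement step your outline omits.
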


\begin{remark}
Because of the embedding $H_{x,y}^{s_1}\hookrightarrow H_{x,y}^{s_2}$ for $s_1>s_2$ we have therefore given Lemma \ref{almost conv lem} only for the (harder) case $s<0$, where even the mass conservation of the NLS is \textit{a priori} unavailable.
\end{remark}

\begin{proof}
We first make the convention that all implicit constants in the upcoming calculations will only depend on $A$. Also write $I=[0,T]$. Let $N_0=N_0(A)$ be a dyadic number chosen later. By a standard continuity argument, it suffices to show that if
\begin{gather}
\sup_{t\in I}M(w(t))\leq 2A N_0^{-2s},\quad \sup_{t\in I}E(w(t))\leq 2A N_0^{2(1-s)},\label{bootstrap1}
\end{gather}
then
\begin{gather*}
\sup_{t\in I}M(w(t))\leq \frac32 A N_0^{-2s},\quad \sup_{t\in I}E(w(t))\leq \frac32A N_0^{2(1-s)}.
\end{gather*}
As usual, we omit the dependence of the norms on $I$ in the calculations. By our assumption, we first know that
\begin{align}\label{12.2}
\|v\|_{L_{t,x,y}^4}+\|v\|_{L_t^6 L_{x,y}^3}\lesssim \|v\|_{W}\lesssim 1.
\end{align}
Since $\hat{v}_0$ is supported on the high frequency region, we also deduce that for $0\leq l<s-\frac13+\frac{K}{2}$
\[\||\nabla|^lv\|_{L_t^2 L_{x,y}^\infty}\lesssim
 N_0^{(l+\frac{1}{3}-\frac{K}{2}-s)+}\|v\|_W\lesssim N_0^{(l+\frac{1}{3}-\frac{K}{2}-s)+}\lesssim 1.\]
Particularly, since $K>\max\{-4s+\frac{20}{3},\frac83\}\geq-2s+\frac{14}{3}$, we can choose $l\in\{0,1,2\}$ to obtain
\begin{align}
\|v\|_{L_t^2 L_{x,y}^\infty}\lesssim N_0^{(\frac{1}{3}-\frac{K}{2}-s)+},\quad
\|\la\nabla \ra v\|_{L_t^2 L_{x,y}^\infty}\lesssim N_0^{(\frac{4}{3}-\frac{K}{2}-s)+},\quad
\|\Delta v\|_{L_t^2 L_{x,y}^\infty}\lesssim N_0^{(\frac{7}{3}-\frac{K}{2}-s)+}.\label{12.3c}
\end{align}
By \eqref{bootstrap1} and interpolation we also infer that for $\alpha\in[0,1]$
\begin{align}
\||\nabla|^s w\|_{L_t^\infty L_{x,y}^2}\lesssim N_0^{\alpha-s}.\label{12.3}
\end{align}
Using Lemma \ref{inter mora lem}, \eqref{12.2}, \eqref{12.3c}, \eqref{12.3} and Young's inequality, we then obtain for arbitrary $\vare>0$ there exists some $C(\vare)>0$ such that
\begin{align*}
\|w\|_{L_{t,x}^4L_y^2}^4&\lesssim N_0^{1-4s}+N_0^{(\frac{11}{6}-\frac{K}{2}-4s)+}
+N_0^{(\frac43-\frac{K}{2}-3s)+}
+N_0^{(\frac{11}{6}-\frac{K}{2}-4s)+}\|w\|_{L_{t,x}^4L_y^2}^2\nonumber\\
&\lesssim N_0^{1-4s}+N_0^{(\frac{11}{6}-\frac{K}{2}-4s)+}
+N_0^{(\frac43-\frac{K}{2}-3s)+}
+C(\vare)N_0^{(\frac{11}{3}-K-8s)+}+\vare\|w\|_{L_{t,x}^4L_y^2}^4.
\end{align*}
Using $K>-4s+\frac{20}{3}>-4s+\frac{8}{3}$ we infer that\footnote{In the case $s<0$ the condition $K>\frac83$ given in \eqref{condition on K} is irrelevant. However, when $s\geq 0$ we need to replace the term $N_0^{1-4s}$ to $N_0$. In this case, the condition $K>\frac83$ will also come into play in order to guarantee \eqref{footnote}.}
\begin{align}\label{footnote}
N_0^{(\frac{11}{6}-\frac{K}{2}-4s)+}+N_0^{(\frac43-\frac{K}{2}-3s)+}+N_0^{(\frac{11}{3}-K-8s)+}\lesssim N_0^{1-4s}.
\end{align}
Hence by choosing $\vare$ suitably small, we deduce that
\begin{align}\label{12.6}
\|w\|_{L_{t,x}^4L_y^2}\lesssim N_0^{\frac{1}{4}-s}.
\end{align}
Next, from the proof of \cite[Lem. 4.5, Prop. 5.4]{ShenSofferWu21} (in conjunction with the periodic boundary conditions in our case) we know that
\begin{align}
\bg|\frac{d}{dt}M(w(t))\bg|&\leq 2\bg|\int (|w+v|^2(w+v)-|w|^2w) \bar w\,dxdz)\bg|,\label{12.7}\\
\bg|\frac{d}{dt}E(w(t))\bg|&\leq \bg|\int (|w+v|^2(w+v)\Delta\bar v\,dxdz)\bg|\label{12.8}.
\end{align}
Using \eqref{12.3c}, \eqref{12.3}, \eqref{12.6}, H\"older and the embeddings $H_y^{\frac12+}\hookrightarrow L_y^\infty$ and $L_y^4\hookrightarrow L_y^2$ we infer that
\begin{align*}
\int_I\text{r.h.s. of \eqref{12.7}}\,dt&\lesssim \|v\|_{L_t^2 L_{x,y}^\infty}(\|v\|^2_{L_{t,x}^4L_y^2}
+\|w\|^2_{L_{t,x}^4L_y^2})\|w\|_{L_t^\infty L_x^2 L_y^\infty}\nonumber\\
&\lesssim \|v\|_{L_t^2 L_{x,y}^\infty}(\|v\|^2_{L_{t,x,y}^4}
+\|w\|^2_{L_{t,x}^4L_y^2})\|w\|_{L_t^\infty L_x^2 H_y^{\frac12+}}
\lesssim N_0^{(\frac43-\frac{K}{2}-4s)+}
\end{align*}
and
\begin{align*}
\int_I\text{r.h.s. of \eqref{12.8}}\,dt&\lesssim \|\Delta v\|_{L_t^2 L_{x,y}^\infty}(\|v\|^2_{L_{t,x}^4L_y^2}
+\|w\|^2_{L_{t,x}^4L_y^2})\|w\|_{L_t^\infty L_x^2 H_y^{\frac12+}}
\lesssim N_0^{(\frac{10}{3}-\frac{K}{2}-4s)+}.
\end{align*}
Using integration by parts and $K>-4s+\frac83$ we deduce that
\begin{align*}
M(w(t))&\leq M(w(0))+\int_I\bg|\frac{d}{dt}M(w(t))\bg|\,dt\leq AN_0^{-2s}+CN_0^{(\frac43-\frac{K}{2}-4s)+}\nonumber\\
&= AN_0^{-2s}+CN_0^{-2s}N_0^{(\frac43-\frac{K}{2}-2s)+}\leq \frac32 AN_0^{-2s}
\end{align*}
by choosing $N_0=N_0(A)$ sufficiently large (which is possible since $K>-4s+\frac{20}{3}>-4s+\frac83$). Similarly we also deduce that $E(w(t))\leq \frac32 AN_0^{2-2s}$, we omit the repeating details here. This completes the desired proof.
\end{proof}

\subsection{Stability theories}\label{sec 4.4}
As a final preliminary, we prove in this subsection a stability result for the equation \eqref{nls_w}.
\begin{lemma}[Short time stability]\label{short time stab lem}
Let $\tilde w\in X^1(I)$ be a solution of \eqref{nls} on a time interval $I$ with $0\in I$ and $\tilde w(0)=\tilde w_0\in H_{x,y}^1$. Suppose that $\|w_0\|_{H_{x,y}^1}\leq L$. Then there exists $\vare_0=\vare_0(L)$ such that if
\begin{align*}
\|\tilde w\|_{Z(I)}\leq \vare_0
\end{align*}
and
\begin{align*}
\|w_0-\tilde w_0\|_{H_{x,y}^1}+\|v\|_{W(I)}\leq \vare<\vare_0,
\end{align*}
then there exists a solution $w\in X^1(I)$ of \eqref{nls_w} such that
\begin{gather*}
\|w-\tilde w\|_{L_t^\infty H_{x,y}^1(I)}+\|w-\tilde w\|_{X^1(I)}\leq C(L)\vare^{\frac14}.
\end{gather*}
\end{lemma}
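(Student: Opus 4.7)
The plan is to introduce $r := w - \tilde w$ and close a bootstrap argument based on the Duhamel formulation of $r$. First, as a preliminary step, I would establish the a priori bound $\|\tilde w\|_{X^1(I)} \leq C(L)$. By Duhamel and Lemma \ref{9.1 lem} applied to $|\tilde w|^2\tilde w$, together with the interpolation $\|\tilde w\|_{Z'} \leq \|\tilde w\|_Z^{3/4}\|\tilde w\|_{X^1}^{1/4}$, one gets
\begin{equation*}
\|\tilde w\|_{X^1(I)} \lesssim L + \|\tilde w\|_{Z'(I)}^2\|\tilde w\|_{X^1(I)} \lesssim L + \varepsilon_0^{3/2}\|\tilde w\|_{X^1(I)}^{3/2},
\end{equation*}
which bootstraps to $\|\tilde w\|_{X^1(I)} \leq C(L)$ once $\varepsilon_0 = \varepsilon_0(L)$ is small enough.

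Next, $r$ solves
\begin{equation*}
r(t) = e^{it\Delta}(w_0 - \tilde w_0) - i\int_0^t e^{i(t-s)\Delta}\bigl(|w+v|^2(w+v) - |\tilde w|^2\tilde w\bigr)(s)\,ds,
\end{equation*}
and expanding the nonlinearity through $w = \tilde w + r$ gives a sum of trilinear monomials in $\{r, \tilde w, v\}$, each containing at least one factor in $\{r, v\}$. I would bootstrap on the hypothesis $\|r\|_{X^1(I)} \leq B\varepsilon^{1/4}$ with $B = C(L)$ to be chosen. The linear part contributes $\|w_0 - \tilde w_0\|_{H^1_{x,y}} \leq \varepsilon$. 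For the trilinear terms built only from $r$ and $\tilde w$ (with at least one $r$), Lemma \ref{9.1 lem} gives a bound of the form
\begin{equation*}
\|r\|_{X^1}\bigl(\|\tilde w\|_{Z'} + \|r\|_{Z'}\bigr)^2 \lesssim \|r\|_{X^1}\bigl(\varepsilon_0^{3/4}C(L)^{1/4} + B\varepsilon^{1/4}\bigr)^2,
\end{equation*}
where I used $\|r\|_{Z'} \leq \|r\|_{X^1}$; for $\varepsilon_0$ and $\varepsilon$ sufficiently small this is a strict fraction of $B\varepsilon^{1/4}$ and is absorbed.

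For the trilinear terms containing at least one $v$ factor, I would prove trilinear bounds that depend on $v$ only through $\|v\|_{W(I)}$, by distributing the $H^1$-derivative implicit in $N^1$ across the three factors via the product rule and combining H\"older in spacetime with Sobolev embedding on $\T_y$, in the spirit of the calculations in the proof of Lemma \ref{local theo lem} (and paralleling Lemma \ref{5.5 lem}). Each such term is bounded by $C(L)\|v\|_W^\alpha$ for some $\alpha \geq 1/2$, hence by $C(L)\varepsilon^{1/4}$ for $\varepsilon$ small. Summing all contributions, the bootstrap closes and gives $\|r\|_{X^1(I)} \leq C(L)\varepsilon^{1/4}$; the $L_t^\infty H^1_{x,y}$ bound follows at once from the embedding $X^1 \hookrightarrow L_t^\infty H^1_{x,y}$.

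The main obstacle I foresee lies in the trilinear estimates for the $v$-containing terms. Since the $W$-norm contains only the fractional derivative $\|\la\nabla\ra^{(s-1/3+K/2)-}v\|_{L_t^2 L_{x,y}^\infty}$ rather than a full $\|\la\nabla\ra v\|_{L_t^2 L_{x,y}^\infty}$, one must arrange the derivatives so that at most one $H^1$-derivative lands on $v$ while the other two factors (in $X^1$) absorb the remaining regularity, then convert fractional $y$-regularity into genuine $L_y^\infty$-control via Sobolev embedding on the compact direction and the lower bound \eqref{condition on K} on $K$. Once this technical point is handled, the contraction/bootstrap argument is standard.
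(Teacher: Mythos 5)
Your overall architecture matches the paper's: establish the a~priori bound $\|\tilde w\|_{X^1(I)}\le C(L)$ by a small-$Z'$ bootstrap, write the Duhamel equation for $r=w-\tilde w$, expand the cubic nonlinearity, close the fixed point in $X^1$, and deduce the $L_t^\infty H^1_{x,y}$ bound by embedding. The $r$-and-$\tilde w$ monomials are indeed handled correctly via Lemma~\ref{9.1 lem} and $\|\cdot\|_{Z'}$-smallness, so that part of the bootstrap closes as you describe.

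The gap is in your treatment of the cross term $\|v\tilde w^2\|_{N^1}$, which is precisely the hardest term and the reason Lemma~\ref{5.5 lem} exists. The mechanism you propose --- distribute the $H^1$-derivative across the three factors by the product rule and then apply H\"older and Sobolev on $\T_y$ --- cannot work here, no matter how the H\"older exponents are arranged. Indeed, the Leibniz term where the derivative lands on $v$ is benign (since $s+\kappa>1$ one has $\|\la\nabla\ra v\|_{L_t^2 L^\infty_{x,y}}\lesssim\|v\|_W$ and $\|\tilde w\|_{L^4_{t,x,y}}$ costs only $X^{1/2}$), but the term where the derivative lands on $\tilde w$ produces $v\cdot\la\nabla\ra\tilde w\cdot\tilde w$. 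Every admissible H\"older placement then requires either $\|\la\nabla\ra\tilde w\|_{L^4_{t,x,y}}$ (which costs $X^{3/2}$) or some $L^p_t L^\infty_{x,y}$ control of $\tilde w$ (which $X^1$ does not give on $\R^3\times\T$), so the estimate cannot be closed in terms of $\|\tilde w\|_{X^1}$ alone. The paper deliberately avoids the Leibniz rule: Lemma~\ref{5.5 lem} is proved via the $N^1$--$Y^{-1}$ duality (Proposition~\ref{prop:dual}), a four-fold Littlewood--Paley decomposition, the bilinear Strichartz estimates of Lemmas~\ref{bil lem 1}, \ref{bil lem 2} and the new Lemma~\ref{bil lem 3}, with the high-frequency gain supplied entirely by the extra regularity $\la\nabla\ra^{s+\kappa}$ of $v$; this yields $\|v\tilde w^2\|_{N^1}\lesssim\|v_0\|_{H^s}^{1/2}\|\la\nabla\ra^{s+\kappa}v\|^{1/2}_{L_t^2L^\infty_{x,y}}\|\tilde w\|^2_{X^1}$, which is what feeds the final $CL^2\vare^{1/2}$ contribution in \eqref{4.49}. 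You cite Lemma~\ref{5.5 lem} in passing but describe a different, weaker mechanism; if you replace that paragraph by an explicit invocation of Lemma~\ref{5.5 lem} (with $\kappa=K/2-1/3$, admissible by \eqref{condition on K}) for every trilinear monomial of the form $v\,a\,b$ with $a,b\in\{r,\tilde w\}$, the rest of your argument closes as written.
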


\begin{proof}
By considering the contraction mapping
\[\Phi_1(\tilde w)=e^{it\Delta}\tilde w_0-i\int_0^t e^{i(t-s)\Delta}(|\tilde w|^2 \tilde w)(s)\,ds\]
in the space
\[S(I):=\{u\in X^1(I):\|u\|_{X^1}\leq 2CL,\,\|u\|_{Z}\leq 2\vare_0\},\]
we may argue as in the proof of Lemma \ref{local theo lem} to find some $\vare_0=\vare_0(L)$ such that $\|\tilde w\|_{X^1}\leq 2CL$. By writing $g=w-\tilde w$ we see that the existence of $w\in X^1(I)$ is equivalent to the existence of $g\in X^1(I)$ solving the perturbed NLS
\begin{align}\label{nls_g1}
(i\pt_t+\Delta)g=|g+v+\tilde w|^2(g+v+\tilde w)-|\tilde w|^2\tilde w=:F(g(t)).
\end{align}
with $g(0)=g_0=w_0-\tilde w_0$. Similarly, we consider the contraction mapping
\[\Phi_2(g)=e^{it\Delta}(w_0-\tilde w_0)-i\int_0^t e^{i(t-s)\Delta}F(g(s))\,ds\]
in the space $S(I)$ to establish the existence of $g$ on $I$ by choosing $\vare_0=\vare_0(L)$ sufficiently small, whence also the existence of $w$ on $I$. In this case we also have
\begin{align*}
\|g\|_{X^1}\leq 2CL,\quad\|g\|_{Z}\leq 2\vare_0,
\end{align*}
whence also $\|g\|_{Z'}\lesssim L^{\frac14}\vare_0^{\frac34}$. Finally, using the nonlinear estimates given in Section \ref{sec 3.2} we infer that
\begin{align}\label{4.49}
\|g\|_{X^1}&\leq C\|w_0-\tilde w_0\|_{H_{x,y}^1}+C(\|g^3\|_{N^1}+\|v^3\|_{N^1}+\|g\tilde w^2\|_{N^1}+\|v\tilde w^2\|_{N^1})\nonumber\\
&\leq C\|w_0-\tilde w_0\|_{H_{x,y}^1}+C\|g\|^2_{Z'}\|g\|_{X^1}+C\|\la\nabla\ra v\|_{L_t^2L_{x,y}^\infty}\|v\|^2_{L_{t,x,y}^4}\nonumber\\
&+C(\|\tilde w\|_{Z'}^2\|g\|_{X^1}+\|\tilde w\|_{Z'}\|g\|_{Z'}\|w\|_{X^1})
+C\|\la\nabla\ra^{s+\kappa} v\|^{\frac12}_{L_t^2 L_{x,y}^\infty}\|\tilde w\|^2_{X^1}\nonumber\\
&\leq C\vare+CL^{\frac12}\vare_0^{\frac32}\|g\|_{X^1}+C\vare^3+CL^{\frac12}\vare_0^{\frac32}\|g\|_{X^1}+CL^{\frac54}\vare_0^{\frac34}\|g\|_{X^1}
+CL^2\vare^{\frac12}.
\end{align}
The desired claim follows from choosing $\vare_0(L)$ sufficiently small and the embedding $X^1\hookrightarrow L_t^\infty H_{x,y}^1$.
\end{proof}

\begin{lemma}[Long time stability]\label{long time stab lem}
Let $\tilde w$ be a solution of \eqref{nls} on a time interval $I$ with $0\in I$ and $\tilde w(0)=\tilde w_0\in H_{x,y}^1$. Suppose that
\begin{align*}
\|\tilde w\|_{L_t^\infty H_{x,y}^1(I)}+\|\tilde w\|_{Z (I)}\leq L
\end{align*}
Then there exists some $\beta=\beta(L,\vare_0)\ll 1$, where $\vare_0$ is the number defined in Lemma \ref{short time stab lem}, such that if
\begin{align*}
\|v\|_{W(I)}\leq \beta,
\end{align*}
then \eqref{nls_w} possesses a solution $w$ on $I$ with $w(0)=\tilde w_0$. Moreover, there exists some $\alpha=\alpha(L,\vare_0)\in(0,1)$ such that
\begin{align*}
\|w-\tilde w\|_{X^1(I)}\leq C(L,\vare_0)\beta^\alpha.
\end{align*}
\end{lemma}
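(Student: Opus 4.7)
The plan is to subdivide $I$ into finitely many consecutive subintervals on which $\|\tilde w\|_{Z}$ is controlled by $\vare_0/2$, and then apply Lemma~\ref{short time stab lem} iteratively, propagating the $H^1_{x,y}$-discrepancy between $w$ and $\tilde w$ across the subintervals. Since $\|\tilde w\|_{Z(I)}\leq L$ is finite, a standard absolute continuity argument produces some $J=J(L,\vare_0)\in\N$ and a partition $I=\cup_{j=1}^J I_j$ with $I_j=[t_{j-1},t_j]$ such that $\|\tilde w\|_{Z(I_j)}\leq \vare_0/2$ for each $j$.

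Next, set $e_j:=\|w(t_j)-\tilde w(t_j)\|_{H^1_{x,y}}$ with $e_0=0$ since $w$ and $\tilde w$ share the same initial datum. Assume inductively that the solution $w$ of \eqref{nls_w} has been constructed on $[t_0,t_j]$ with $e_j\leq \vare_0/2$ and $\|w(t_j)\|_{H^1_{x,y}}\leq 2L$. Applying Lemma~\ref{short time stab lem} on $I_{j+1}$ with reference solution $\tilde w$, initial datum size $L_j:=2L$, and perturbation parameter $\vare:=e_j+\|v\|_{W(I_{j+1})}\leq e_j+\beta$, one obtains a solution of \eqref{nls_w} on $I_{j+1}$ satisfying
\[
e_{j+1}+\|w-\tilde w\|_{X^1(I_{j+1})}\leq C(L)(e_j+\beta)^{1/4},
\]
provided that $e_j+\beta<\vare_0$. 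A routine induction then yields $e_j\leq C_*(L,\vare_0)\,\beta^{4^{-j}}$ for every $j\leq J$, and summing the $X^1$-estimates over the $J$ subintervals produces $\|w-\tilde w\|_{X^1(I)}\leq C(L,\vare_0)\beta^\alpha$ with $\alpha:=4^{-J}\in(0,1)$.

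The main obstacle is the quantitative bookkeeping across the $J$ iterations: the exponent $1/4$ inherited from Lemma~\ref{short time stab lem} degrades under composition, so the final exponent $\alpha$ is forced to be exponentially small in $J=J(L,\vare_0)$, and correspondingly $\beta=\beta(L,\vare_0)$ must be chosen so small that both the smallness hypothesis $e_j+\beta<\vare_0$ of Lemma~\ref{short time stab lem} and the $H^1_{x,y}$-boundedness $\|w(t_j)\|_{H^1_{x,y}}\leq 2L$ persist through every step of the induction. Once these parameter choices are fixed, the remainder reduces to a routine summation of the $X^1$-norms over the partition $\{I_j\}_{j=1}^J$.
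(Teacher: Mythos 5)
Your proposal is correct and follows essentially the same route as the paper: partition $I$ into $J=J(L,\vare_0)$ pieces where $\|\tilde w\|_Z$ is at most $\vare_0$ (or $\vare_0/2$), iterate Lemma~\ref{short time stab lem} across the pieces while tracking the recursively degrading discrepancy $e_j$, and sum the $X^1$ bounds to obtain an exponent $\alpha\sim 4^{-J}$ with $\beta$ chosen small enough that every intermediate $a_j$ (resp.\ $e_j+\beta$) stays below $\vare_0$. Your explicit bookkeeping via $e_j$ and the remark about propagating $\|w(t_j)\|_{H^1_{x,y}}\lesssim L$ through the induction make the argument a little more transparent than the paper's, but the underlying mechanism and the resulting $\alpha$, $\beta$ dependence on $(L,\vare_0)$ are the same.
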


\begin{proof}
First, we partition $I=\cup_{j=1}^J I_j=:\cup_{j=1}^J [t_{j-1},t_j)$ with $J=J(L,\vare_0)\in\N$ such that
$$c_1^{-1}\vare_0\leq \|\tilde w\|_{Z(I_j)}\leq\vare_0$$
for all $j=1,\cdots, J$, where $c_1>1$ is some positive constant. Let $C(L)$ be the number defined in Lemma \ref{short time stab lem}. We then let $\beta$ be a positive number such that
\begin{align*}
\beta\leq C(L)^{\frac43}\bg(\frac{\vare_0}{C(L)^{\frac43}}\bg)^{4^{J-1}}.
\end{align*}
With this choice of $\beta$ it is easy to verify that if $a_1:=\beta$ and $a_{j+1}:=C(L)a_j^{\frac14}$, then \[a_j=C(L)^{\frac43(1-\frac{1}{4^{j-1}})}\beta^{\frac{1}{4^{j-1}}}\]
and $a_j\leq \vare_0$ for all $j=1,\cdots,J$. Hence we may apply Lemma \ref{short time stab lem} on all $I_j$ to infer that
\begin{align*}
\|w-\tilde w\|_{L_t^\infty H_{x,y}^1(I_j)}+\|w-\tilde w\|_{X^1(I_j)}\leq a_j\leq \vare_0,
\end{align*}
from which we also deduce that
\begin{align*}
\|w-\tilde w\|_{X^1(I)}\leq \sum_{j=1}^J C(L)^{\frac43(1-\frac{1}{4^{j-1}})}\beta^{\frac{1}{4^{j-1}}}\leq JC(L)^{\frac43}\beta^{\frac{1}{4^{J-1}}},
\end{align*}
as desired.
\end{proof}

\subsection{Conclusion}\label{sec 4.5}
We complete in this subsection the proof of Theorem \ref{main thm}. For the final proof, we also need the following large data scattering result for the defocusing cubic NLS on $\R^3\times\T$.

\begin{theorem}[Large data scattering of \eqref{nls}, \cite{RmT1}]\label{thm 4.7}
Let $u_0\in H_{x,y}^1$ and let $u\in X^1$ be a local solution of \eqref{nls} defined on a neighborhood of $0$ with $u(0)=u_0$. Then $u$ is a global scattering solution. In particular, we have $\|u\|_{Z(\R)}\leq C(\|u_0\|_{H_{x,y}^1})$.
\end{theorem}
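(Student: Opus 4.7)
The plan is to follow the Kenig--Merle concentration--compactness/rigidity scheme, adapted to the waveguide geometry $\R^3\times\T$, since the cubic NLS here is energy-critical. I first set up the local Cauchy theory in the atomic spaces $X^1, Y^1, N^1$ already introduced in the excerpt: a contraction-mapping argument based on Lemma \ref{9.1 lem} and the embedding $X^1\hookrightarrow Z$ yields local well-posedness, small-data global scattering, and most importantly a robust long-time stability lemma. Together these show that it suffices to bound $\|u\|_{Z(\R)}$ a priori by a function of the $H^1$-norm, so the problem becomes the classical one: to rule out a minimal non-scattering solution.

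Assume for contradiction that the statement fails. Then the quantity
\begin{equation*}
E_c := \sup\{E : \text{every } H^1\text{-solution with energy} < E \text{ scatters}\}
\end{equation*}
is finite, and one can extract a sequence $(u_{0,n})$ of data with $E(u_{0,n})\to E_c$ whose corresponding solutions fail to scatter. To analyze this sequence I would prove a linear profile decomposition for bounded sequences in $H^1(\R^3\times\T)$. Because the torus factor is compact while the $\R^3$ factor is not, three regimes of profiles must be isolated: Euclidean profiles at unit scale (fixed $\T$-frequency, translated in $x$ in $\R^3$), Euclidean profiles at a scale $h_n\to 0$ (which zoom in below the torus length and therefore look asymptotically like profiles of the cubic energy-critical NLS on $\R^4$), and profiles obtained by shifting the $\T$-frequency $k_n\to\infty$ (which, after a Galilean change of variables, again reduce to one of the previous two types). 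Each linear profile must be assigned a nonlinear profile: unit-scale profiles are handled by the subcritical $H^1$-theory on $\R^3\times\T$ (scattering for smooth data), the $h_n\to 0$ profiles are handled by the known large-data scattering of the energy-critical cubic NLS on $\R^4$ (Ryckman--Visan), and frequency-shifted profiles are conjugated to one of the first two classes. Using the nonlinear superposition principle provided by the stability lemma, the failure of scattering for $u_{0,n}$ forces the decomposition to contain exactly one ``bad'' profile, which by definition of $E_c$ cannot itself live in the sub-scale or frequency-shifted regime; this produces a critical element $u_c\in C(\R;H^1)$ with $E(u_c)=E_c$ whose orbit $\{u_c(t):t\in\R\}$ is precompact in $H^1$ modulo translations in $\R^3$ at a fixed scale.

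For the rigidity step I would then use the interaction Morawetz inequality of Lemma \ref{inter mora lem} specialized to $v\equiv 0$, which reads
\begin{equation*}
\|u\|_{L^4_{t,x}L^2_z}^4 \lesssim \|u\|_{L^\infty_t L^2_{x,z}}^{2}\|u\|_{L^\infty_t H^{1/2}_x L^2_z}^{2}.
\end{equation*}
The precompactness of the trajectory of $u_c$ (modulo $x$-translations) bounds the right-hand side uniformly on $\R$, whereas the same precompactness together with a standard contradiction argument (using that $u_c$ does not scatter and thus spreads no mass to infinity) forces the left-hand side to be infinite unless $u_c\equiv 0$. Since $E_c>0$ this contradicts $E(u_c)=E_c$, and hence $E_c=\infty$, establishing the theorem.

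The main obstacle will be the profile decomposition. Unlike the purely Euclidean or purely periodic setting, one has to simultaneously track $x$-translations, $x$-scalings, and $\T$-frequency shifts, and one must carefully justify that in the small-scale regime $h_n\to 0$ the rescaled Euclidean profiles are genuine solutions of the $\R^4$-cubic NLS. Embedding this large-data $\R^4$ scattering (itself a deep theorem) as a black box, and matching its Strichartz spaces with the mixed norms used in the $\R^3\times\T$ stability lemma, is the delicate part; once this is done the rigidity argument via the interaction Morawetz inequality proceeds in the standard way.
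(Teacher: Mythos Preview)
The paper does not prove this statement at all: Theorem \ref{thm 4.7} is quoted as a black box from Zhao \cite{RmT1} and invoked without argument in Section \ref{sec 4.5}. Your outline is, however, a faithful sketch of what \cite{RmT1} actually does---the Kenig--Merle concentration-compactness/rigidity scheme adapted to $\R^3\times\T$, with small-scale profiles handled by the known scattering for the defocusing energy-critical NLS on $\R^4$ and rigidity supplied by the interaction Morawetz inequality.

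Two minor corrections to your sketch. First, because $\T$ is compact and the $H^1$-bound controls the $\partial_y$-frequency, the profile decomposition in \cite{RmT1} (following \cite{Ionescu2,HaniPausader}) does not carry a separate regime of diverging $\T$-frequency shifts; only $\R^3$-translations and the small-scale Euclidean concentration survive as genuine defects of compactness. Second, the $\R^4$ input is the nonradial large-data theorem of Visan; Ryckman--Visan is the $\R^5$ case. Neither point affects the overall structure of the argument, and since the present paper simply cites the result, these details lie outside its scope.
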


Having all the preliminaries we are in a position to prove Theorem \ref{main thm}.

\begin{proof}[Proof of Theorem \ref{main thm}]
It suffices to consider the (harder) case $s<0$. By Lemma \ref{finiteness lemma} we know that for a.e. $\omega\in \Omega$ we have
\begin{align}\label{4.53}
\|f^\omega\|_{H_{x,y}^s}+\|f^\omega\|_{L_{x,y}^4}+\|e^{it\Delta}f^\omega\|_{W(\R)}\leq C_\omega<\infty.
\end{align}
Now set
\[u_0=f^\omega,\quad v_0=P_{> N_0}f^\omega,\quad w_0= P_{\leq N_0}f^\omega,\quad v=e^{it\Delta}v_0.\]
in order to apply the results stated in Section \ref{sec 4.1} to Section \ref{sec 4.4}, where $N_0$ is some to be determined dyadic number. Using Bernstein and \eqref{4.53} we obtain
\begin{align*}
\|w_0\|_{L_{x,y}^2}&\lesssim N_0^{-s}\|f^\omega\|_{H_{x,y}^s}\lesssim C_\omega N_0^{-s},\\
\|\nabla w_0\|_{L_{x,y}^2}&\lesssim N_0^{1-s}\|f^\omega\|_{H_{x,y}^s}\lesssim C_\omega N_0^{1-s},\\
\|u_0\|_{L_{x,y}^4}&= \|f^\omega\|_{L_{x,y}^4}\lesssim C_\omega.
\end{align*}
By setting $A:=C(1+C_\omega)^4$ we see that \eqref{4.30} and \eqref{4.31} are satisfied. Let $N_0=N_0(A)$ be anchored according to Lemma \ref{almost conv lem}. Now we set in Lemma \ref{long time stab lem} the number $L$ as $L=2CA N_0^{2-2s}$. Let $\beta$ in Lemma \ref{long time stab lem} be determined according to $L$. We then partition $[0,\infty)=\cup_{j=1}^J I_j$ with $I_j=[t_{j-1},t_j)$ such that
\[c_1^{-1}\beta\leq \|v\|_{W(I_j)}\leq \beta\]
for all $j=1,\cdots,J$, where $c_1>1$ is some positive constant. Let $\tilde w=\tilde w^j$ be the solution of \eqref{nls} with $\tilde w^j(t_{j-1})=w(t_{j-1})$. By the inductive hypothesis we may apply Lemma \ref{long time stab lem} to infer that
$$\|w-\tilde w^j\|_{X^1(I_j)}\leq C(L)\beta^\alpha.$$
From Theorem \ref{thm 4.7} and our inductive hypothesis we also know that
$$\|\tilde w^j\|_{X^1(I_j)}\leq C(\|w(t_{j-1})\|_{H_{x,y}^1})=C(L).$$
On the other hand, using Lemma \ref{almost conv lem} we have $\|w(t_{j})\|_{H_{x,y}^1}\leq 2AN_0^{2-2s}$, thus the previous arguments are applicable for the interval $I_{j+1}$. Applying therefore the previous arguments inductively for all $I_j$, $j=1,\cdots,J$, with $\tilde w^{j+1}(t_j)=w(t_j)$ and followed by summing up the sub-estimates, we deduce that $\|w\|_{X^1(0,\infty)}<\infty$. A similar result holds also for $(-\infty,0)$, thus $\|w\|_{X^1(\R)}<\infty$, from which the global well-posedness of $w$ and consequently $u=w+v$ follows. The scattering of $u$ then follows from $\|w\|_{X^1(\R)}+\|v\|_{W(\R)}<\infty$ and a standard application of the Strichartz estimates (which are similar to the ones applied in the proof of Lemma \ref{local theo lem}, see also \cite{HaniPausader,RmT1} for similar arguments), we thus omit the details here. The proof of Theorem \ref{main thm} is therefore complete.
\end{proof}

%%%%%%%%%%%%%%%%%%%%%%%%%%%%%
%\addcontentsline{toc}{section}{References}
%\bibliography{biharmonic}
%\bibliographystyle{acm}

\end{document}